\newtheorem{prop}[subsection]{Proposition}
\newtheorem{corollary}[subsection]{Corollary}
\newtheorem{theorem}[subsection]{Theorem}
\newtheorem{definition}[subsection]{Definition}
\newtheorem{lemma}[subsection]{Lemma}
\newtheorem{example}[subsection]{Example}
	\date{}
\begin{document}
	\title{Left semi-braces and solutions of the Yang-Baxter equation
}

	\author{Eric Jespers \and Arne Van Antwerpen}
	\address{Department of Mathematics, Vrije Universiteit Brussel\\
	Pleinlaan 2, 1050 Brussel}
	\email{
	eric.jespers@vub.be and arne.van.antwerpen@vub.be}

		\thanks{The first author is supported in part
        by  Onderzoeksraad of Vrije Universiteit Brussel and 
        Fonds voor Wetenschappelijk Onderzoek (Belgium). The second author is supported by
        Fonds voor Wetenschappelijk Onderzoek (Vlaanderen).\\
        }
\keywords{ Yang-Baxter equation, semi-brace, monoid algebras, monoids, groups; 
MSC Codes: 20M25, 16T25, 20E22 
and 16S36}

	\maketitle
	
\begin{abstract}
 Let $r:X^{2}\rightarrow X^{2}$ be a set-theoretic solution of the Yang-Baxter 
 equation on a finite set $X$.
It was proven by Gateva-Ivanova and Van den Bergh that if $r$ is non-degenerate 
and involutive then the 
algebra $K\langle x \in X \mid xy =uv \mbox{ if } r(x,y)=(u,v)\rangle$ shares 
many properties with commutative
polynomial algebras in finitely many variables; in particular this algebra is 
Noetherian, satisfies a polynomial identity
and has Gelfand-Kirillov dimension a positive integer. Lebed and Vendramin 
recently extended this result to arbitrary
non-degenerate bijective solutions. Such solutions are naturally associated to 
finite skew left braces. In this paper
we will prove an analogue result for arbitrary solutions $r_B$ that are 
associated to a left semi-brace $B$; such solutions
can be degenerate or can  even be idempotent. In order to do so we first 
describe
such semi-braces and we prove some decompositions results extending results of  
Catino, Colazzo, and Stefanelli.
\end{abstract}

	\section{Introduction}

 The Yang-Baxter equation is an important tool in several fields of research,
 among these are statistical mechanics, particle physics, quantum field theory 
 and quantum group theory. We refer to \cite{backgroundYBE} for a brief 
 introduction. Studying the solutions of the Yang-Baxter equation has been a 
 major research area for the past $50$ years. In 1992, V. Drinfeld focused 
 attention on the so-called set-theoretic solutions, or braided sets. 
 Set-theoretic solutions 
 of the Yang-Baxter equation are sets $X$ with a map $r: X \times X 
 \longrightarrow X \times X$  such that, on $X^{3}$  the following equation
 is satisfied $$ \left( r 
 \times \textup{id}\right) \left( \textup{id}\times r \right) \left( r \times 
 \textup{id}\right) = \left( \textup{id} \times r \right) \left( r \times 
 \textup{id} \right) \left( \textup{id} \times r \right).$$
 Let $(X,r)$ be such a solution. If, furthermore, $r^2 = \textup{id}_{X\times X}$ then the solution is said to be involutive. 
 For  $x,y \in X$ define the maps $\sigma_x: X \longrightarrow X$ and 
 $\gamma_y: X \longrightarrow X$ by $r(x,y) = 
 \left(\sigma_x(y),\gamma_y(x)\right)$. A solution $(X,r)$ is called left 
 (respectively right) non-degenerate if, for any $x \in X$, $\sigma_x$ (respectively $\gamma_x$) is 
 bijective. A solution is called non-degenerate if it is both left and right 
 non-degenerate. In \cite{rump2005decomposition} Rump showed that every finite 
 involutive, non-degenerate set-theoretic solution of the Yang-Baxter equation 
 corresponds to a new algebraic structure, called a left brace. In 
 \cite{cedo2014braces} Ced\'o, Jespers and Okni\'nski showed that Rump's left 
 braces are equivalent to the currently often used definition. A left brace is a set 
 $A$ with an abelian group structure $(A,+)$ and a group structure 
 $(A,\circ)$ such that $\left(a \circ (b + 
 c)\right) + a = (a \circ b) + (a \circ c)$,  for any $a,b,c \in A$. Right braces are defined 
 similarly.  
 
 Left braces turn out to be a very useful tool for the investigations in different topics. We mention three such topics.
First,  in \cite{rump2005decomposition} Rump showed that two-sided braces (left braces that also are a right brace, for the same operations)
 are equivalent with radical rings. Moreover, through the work of 
 Gateva-Ivanova, Van den Bergh \cite{GatevaVandenBergh} and Etingof, Schedler 
 and Soloviev \cite{etingof1998set} the theory of braces was connected with the 
 theory of finitely generated quadratic algebras and monoids and groups of $I$-type. 
 If $(X,r)$ is an involutive non-degenerate set-theoretic solution on a finite set $X$ then the  monoid $M=M(X,r)=\langle x\in X \mid xy=uv \mbox{ if } r(x,y)=(u,v)
 \rangle$ is  called a monoid of $I$-type. In \cite{GatevaVandenBergh} it is 
 shown that, for any field $K$, the monoid algebra $KM$ shares many properties 
 with polynomial algebras in finitely many commuting generators. In particular, 
 these algebras are left and right Noetherian domain that satisfy a polynomial 
 identity. Furthermore, $M$ is embedded in its group of fractions 
 $G=G(X,r)=\mbox{gr}(x\in X \mid xy=uv \mbox{ if }  r(x,y)=(u,v))$, a solvable 
 group that is Bieberbach, i.e. a finitely generated, torsion free 
 abelian-by-finite group. Note that $G(X,r)$ also is called the structure group 
 of $(X,r)$
 in \cite{etingof1998set}). By analogy, we call $M(X,r)$ the structure monoid of $(X,r)$.
 Second, interest from group theory 
 stems from the equivalence of left braces and regular subgroups of the 
 holomorph of an abelian group and Hopf-Galois extensions as studied by Ced\'o, 
 Jespers and del R\'{\i}o 
 \cite{cedo2010involutive}, Goffa and Jespers \cite{goffa2007monoids}, Catino, 
 Colazzo and Stefanelli \cite{catino2016regular} and Gateva-Ivanova, Jespers 
 and Okni\'nski \cite{GIJO}. Third, braces are a useful tool for calculations in 
 ring theory and group theory, as shown by Smoktunowicz in 
 \cite{smoktunowicz2015engel}, where questions on Engel groups and nil 
 algebras, proposed by Sysak, Amberg and Zelmanov, are answered by calculations 
 in braces. 
 
 Guarnieri and Vendramin \cite{guarnieri2017skew} have shown that the study of 
 finite non-degenerate set-theoretic solutions of the Yang-Baxter 
 equation (i.e. not necessarily involutive) is equivalent with the study of 
 skew left braces,  a generalization of left braces. A skew 
 left brace is a set $B$ with two group structures $(B,\cdot)$ and $(B,\circ)$ 
 such that 
  \begin{eqnarray} a \circ (b\cdot c) &=& \left(a \circ b 
 \right)\cdot  a^{-1} \cdot  \left( a \circ c\right), \label{defskewbrace}
 \end{eqnarray} 
 for any $a,b,c \in B$, where $a^{-1}$ denotes the inverse of $a$ in 
 $(A,\cdot)$.  A skew right brace is defined 
 similarly. Furthermore, skew left braces are equivalent to regular subgroups 
 of the holomorph of a group, which can be related to the theory of Hopf-Galois 
 extensions as shown by Bachiller \cite{bachiller2016counterexample} (for 
 braces) and explained in an appendix by Byott and Vendramin 
 \cite{smoktunowicz2017skew}.
Of course, also in this case one can define the structure monoid and structure group of the non-degenerate solution $(X,r)$. Note, however,
that $X$ is not necessarily embedded anymore  in $G(X,r)$ (see for example 
\cite[3.4]{smoktunowicz2017skew}). The algebraic structure of 
$KM(X,r)$  is yet unknown  and in \cite[Thoerem 5.6]{lebed2017structure} it is 
shown that $G(X,r)$ again is free abelian-by-finite and
thus $KG(X,r)$ is Noetherian and satisfies a polynomial identity. Moreover, its Gelfand-Kirillov dimension is at most
$|X|$. 

In all the above, set-theoretic solutions $(X,r)$  are considered with $r$ a bijective mapping.
Lebed in \cite{lebedfactor} has shown that solutions $(X,r)$ with $r^{2}=r$, as well as their associated algebras $KM(X,r)$, also are of importance. They provide a powerful unifying tool, simultaneously treating very different algebraic structures, such as  free and free commutative monoids, factorizable monoids, distributive lattices and  Young tableaux and plactic monoids.
Such idempotent solutions also show up in  recent work of Catino, Colazzo and 
Stefanelli \cite{catino2017semi} on the investigations of left semi-braces 
$(B,\cdot, \circ)$, a generalisation of skew left braces. Here $(B,\circ) $ is a group and $(B,\cdot )$ is a left 
cancellative semigroup and both operations satisfy a compatibility condition 
that generalizes (\ref{defskewbrace}) (see definition). 

In this paper we  will investigate  arbitrary left semi-braces $(B,\cdot, \circ)$, i.e. the semigroup $(B,\cdot )$ is not necessarily left cancellative. 
We will describe (finite) left semi-braces in terms of skew left braces  and a left semi-brace with a 
multiplicative semigroup consisting of idempotents only; in particular they turn out to be a matrix 
semigroup over a skew left brace $G$. Under some (mild) assumption we will describe   left 
semi-braces as  a matched product of two twosided semi-braces and a skew left brace. 
Next, it is shown that to a finite left semi-brace $B$ one associates a natural set-theoretic solution $r_{B}$ on $B$. The above  
information will then be used to obtain algebraic information on the structure monoid $M(B,r_{B})$, a finitely presented monoid on $|B|$ generators satisfying homogeneous quadratic relations determined by $r_{B}$.
As an application it is shown that the monoid algebra $KM(B,r_B)$ (the 
algebra determined by the ``same'' presentation as $(B,r_{B})$, is a finite 
(left 
and right) module over $KM(G,r_G)$. Hence, it follows 
that the  algebra
$KM(B,r_B)$ is again (left and right) Noetherian, has  finite Gelfand-Kirillov 
dimension and satisfies a polynomial identity. We show that $r_{B}^{3}=r_{B}$ 
and, 
if $|G|=1$  then $r_{B}^{2}=r_{B}$.
Furthermore, we show that ideals of $B$, i.e. kernels $\ker (f) =\{ b \in B \mid f(b) =1_{\circ} \}$ of left semi-brace homomorphisms $f$, are precisely the normal subgroups $(I,\circ)$ of $(B,\circ)$ such that $(I\cap G,\cdot )$ is a normal subgroup of $(G,\cdot)$ and that are invariant under some natural maps associated to $B$. An example of such an ideal is the socle; a tool that has shown its importance for (skew) left braces. 

\section{A description of left semi-braces}

We begin by defining the notion of a left semi-brace $(B, \cdot , \circ )$. Our 
definition extends the one introduced by F. Catino, I. Colazzo and P. 
Stefanelli \cite{catino2017semi} where one works under the restriction that  
the semigroup $(B,\cdot )$ is left cancellative.

\begin{definition}\label{skewbrace} Let $B$ be a set with two operations 
$\cdot$ and $\circ$ such that 
$(B,\cdot)$ is a semigroup and $(B, \circ)$ is a group. 
One says that $(B, \cdot , \circ )$ is left semi-brace if 	       
        $$ a \circ (b\cdot c) = (a \circ b) \cdot \left( a \circ ( \overline{a} 
        \cdot c )\right),$$
for all $a,b,c \in B$. 
Here, $\overline{a}$ denotes the inverse of $a$ in $(B,\circ)$. 
We call $(B,\cdot )$ the multiplicative semigroup of the left semi-brace of $(B,\cdot , \circ )$.
If the semi-group $(B,\cdot)$ has a pre-fix, pertaining to some property of the semi-group, we will also use this pre-fix with the left semi-brace.

If, moreover,  $(B,\cdot )$ is group then $(B, \cdot , \circ )$ is called a 
skew left brace (introduced by  Guarneri and Vendramin in \cite{guarnieri2017skew}). If $(B,\cdot )$ 
is an abelian group then $(B, \cdot , \circ )$ is called a brace (introduced 
by  Rump in \cite{RumpBraces}).
\end{definition}

Throughout we denote  by $1_{\circ}$ the identity of the group $(B, \circ)$. 
For $a,b\in B$ we denote the product $a\cdot b$ simply as $ab$. 

Recall that a semigroup   $(S,\cdot)$ is said to be a right zero semigroup if  
 $xy = y$ for any $x,y \in S$. Hence,   a left semi-brace $(B,\cdot,\circ)$, where $(B,\cdot)$ is a right 
zero semigroup is called a right zero left semi-brace.
For more terminology and background on semigroup theory we refer the reader to 
 \cite{Howie}. The subset consisting of idempotents in $S$ is denoted by 
 $E(S)$. In particular, by $E(B)$ we denote the
 idempotents of the semigroup $(B,\cdot )$ of a left semi-brace $(B,\cdot,\circ)$.

We recall the characterization of left cancellative left semi-braces  obtained 
by F. Catino, I. Colazzo and P. Stefanelli \cite{catino2017semi}.
Of course, a map $f: B_1 	\longrightarrow B_2$ between two left semi-braces $(B_1,\cdot,\circ)$ and $(B_2,\cdot,\circ)$ is said to be a left semi-brace homomorphsim if
it respects both  operations $\cdot$ and $\circ$.

\begin{theorem}\cite{catino2017semi} \label{Colazzochar}
If $(B,\cdot, \circ)$ is a left cancellative left semi-brace
then $(B, \cdot) \cong B 1_{\circ} \times E(B)$, and $(B1_{\circ},\cdot) $ is a group. Furthermore, $(B 
1_{\circ}, \cdot , \circ)$ is a skew left brace, $(E(B),\cdot, \circ)$ is a right zero 
left semi-brace and  $(B, \cdot, \circ) \cong (B  1_{\circ}, \cdot, \circ) 
\bowtie (E(B), \cdot, \circ)$,  a matched product of semi-braces (see Definition 
\ref{Matchedproductsemibrace}). Conversely, any such matched product is a left 
cancellative left semi-brace. 
\end{theorem}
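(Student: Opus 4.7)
The plan is to pin down the role of $1_{\circ}$ in $(B,\cdot)$. Substituting $a=1_{\circ}$ in the defining axiom of Definition~\ref{skewbrace} gives $bc = b\cdot(1_{\circ}\cdot c)$, so left cancellation forces $1_{\circ}\cdot c = c$ for every $c\in B$. Setting $b=1_{\circ}$ then produces the key identity $a\circ c = a\cdot\lambda_a(c)$, where $\lambda_a(c):=a\circ(\overline{a}\cdot c)$; and specialising $c=\overline{a}$ yields $a\cdot w(a)=1_{\circ}$ with $w(a):=a\circ\overline{a}^{2}$, so every element of $B$ admits a right $\cdot$-inverse.

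I would then analyse $E(B)$. For $e\in E(B)$, left cancellation applied to $e\cdot(e\cdot 1_{\circ}) = e^{2}\cdot 1_{\circ} = e\cdot 1_{\circ}$ yields $e\cdot 1_{\circ}=1_{\circ}$; hence for $e,f\in E(B)$, $ef=(e\cdot 1_{\circ})\cdot f=1_{\circ}\cdot f=f$, showing $(E(B),\cdot)$ is a right zero semigroup. Moreover $e_a:=w(a)\cdot a$ is idempotent and satisfies $a=a\cdot e_a$, so $a=(a\cdot 1_{\circ})\cdot e_a$ with $a\cdot 1_{\circ}\in B1_{\circ}$ and $e_a\in E(B)$. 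Uniqueness of $b=p\cdot e$ with $p\in B1_{\circ}$ and $e\in E(B)$ follows from $b\cdot 1_{\circ}=p\cdot(e\cdot 1_{\circ})=p$ combined with left cancellation, establishing $(B,\cdot)\cong B1_{\circ}\times E(B)$ as semigroups.

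The crux is to verify that $(B1_{\circ},\cdot,\circ)$ is a skew left brace. On $B1_{\circ}$, $1_{\circ}$ is a two-sided $\cdot$-identity, so for $x\in B1_{\circ}$ the equality $x\cdot 1_{\circ}=x=x\cdot e_x$ together with left cancellation gives $e_x=1_{\circ}$, whence $w(x)\cdot x=1_{\circ}=x\cdot w(x)$. A further cancellation applied to $x\cdot(w(x)\cdot 1_{\circ})=(x\cdot w(x))\cdot 1_{\circ}=1_{\circ}=x\cdot w(x)$ forces $w(x)\cdot 1_{\circ}=w(x)$, so the inverse $w(x)$ lies in $B1_{\circ}$ and $(B1_{\circ},\cdot)$ is a group. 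I expect this to be the main obstacle, since one must force the $\cdot$-inverse to stay inside $B1_{\circ}$ without the benefit of right cancellation in $(B,\cdot)$. Closure of $B1_{\circ}$ under $\circ$ and under $\circ$-inverse then follows by applying the semi-brace axiom with $c=1_{\circ}$ (and using $\lambda_x(1_{\circ})=e_x=1_{\circ}$ for $x\in B1_{\circ}$), after which the restricted axiom becomes precisely the skew left brace relation.

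A parallel analysis produces the $\circ$-group structure on $E(B)$, hence a right zero left semi-brace $(E(B),\cdot,\circ)$; identifying the natural actions that $(B1_{\circ},\circ)$ and $(E(B),\circ)$ exert on one another inside $(B,\circ)$ realises $(B,\cdot,\circ)$ as the asserted matched product. The converse is a direct verification that any such matched product satisfies the defining axiom of a left semi-brace together with left cancellativity of the $\cdot$-operation.
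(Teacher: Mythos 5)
The paper itself does not prove this theorem --- it is imported from \cite{catino2017semi} --- so there is no in-house proof to compare against; the closest internal analogue is the general machinery of Lemmas~\ref{S2isS1S}--\ref{1s1subgroup}, Corollary~\ref{rowcol} and Theorem~\ref{characttheorem}, which recover the statement as the special case $|I|=1$. Your cancellative shortcuts are all sound and they do buy a much quicker route than the paper's general arguments: $1_{\circ}c=c$ from the axiom with $a=1_{\circ}$, the identity $a\circ c=a\cdot\lambda_a(c)$, the idempotent $e_a=w(a)\cdot a$ with $a=a\cdot e_a$, the right-zero structure of $E(B)$, the unique factorisation $b=(b1_{\circ})\cdot e_b$, and the verification that $(B1_{\circ},\cdot)$ is a group with $\cdot$-inverses staying inside $B1_{\circ}$ all check out, as does the reduction of the semi-brace axiom to the skew-brace axiom on $B1_{\circ}$.

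Two steps are genuinely under-justified. First, the claim that ``a parallel analysis'' gives the $\circ$-group structure on $E(B)$: the argument is not parallel to the $B1_{\circ}$ case. What actually works is to combine $a\circ c=a\cdot\lambda_a(c)$ with the uniqueness of the factorisation: e.g.\ $1_{\circ}=\overline{e}\circ e=\overline{e}\cdot\lambda_{\overline{e}}(e)=\overline{e}\cdot 1_{\circ}$ forces the $B1_{\circ}$-component of $\overline{e}$ to be $1_{\circ}$, hence $\overline{e}\in E(B)$, and similarly $e=(e\circ f)\cdot e$ forces $(e\circ f)1_{\circ}=1_{\circ}$, hence $e\circ f\in E(B)$. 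This is fixable in a few lines, but it is a different mechanism from the one you used for $B1_{\circ}$ and should be spelled out. Second, and more seriously, the matched-product identification is the computational heart of the theorem and you give essentially nothing for it: ``identifying the natural actions'' is not enough, because the actions in Definition~\ref{Matchedproductsemibrace} are not conjugation actions but the specific maps $\delta_{g}(e)=\overline{\lambda_{\overline{g}}(\overline{e})}$ and $\sigma_{e}(g)=\overline{\rho_{\overline{e}}(\overline{g})}$ built from $\lambda$ and $\rho$, and one must verify that they are actions by automorphisms, that conditions (1)--(5) of that definition hold, and that the resulting product formula reproduces $\circ$ on $B$. This occupies most of the proof of the paper's Theorem~\ref{characttheorem} in the analogous general setting, and without exhibiting the actions and checking these identities the isomorphism $(B,\cdot,\circ)\cong(B1_{\circ},\cdot,\circ)\bowtie(E(B),\cdot,\circ)$ is not established.
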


Using the notation of the theorem, it can be seen that the multiplicative semigroup  of $B$  is isomorphic with the multiplicative completely $0$-simple semigroup
$\mathcal{M}(G,1,|E(B)|,\mathcal{I}_{|E(B)|,1})$, where $(G,\cdot)$ is the 
group $(B1_{\circ}, \cdot )$ and  $\mathcal{I}_{i,k}$ 
denotes the $i\times k-$matrix with $1$ in every entry (here $1$ denotes the 
identity of the group $G$). 


In this section we show that the multiplicative semigroup of an arbitrary finite left semi-brace is a   completely simple semigroup, that is (see
Theorem 3.3.1 in \cite{Howie})	
$(B,\cdot )$ is isomorphic with a matrix semigroup
$\mathcal{M}(G, I,J,P)=\{ (g,i,j)\mid g\in G,\; i\in I, \; j\in J\}$ for some group $G$, sets  $I,J$ 
and $J\times I$-matrix $P$ with entries in $G$. The multiplication is as  follows: $(g,i,j) (h,k,l) =(gp_{jk}h,i,l)$. Furthermore, we show that
one may take $=\mathcal{I}_{|J|,|I|}$.
%
A counterexample to show that this does not 
hold for infinite left semi-braces is not known to the authors.

We begin by showing left semi-braces $B$  do not contain a zero element if $|B|\geq 2$.

\begin{lemma} \label{nozero}
If $(B,\cdot,\circ)$ is a left semi-brace with at least two elements then the multiplicative semigroup of $B$ does not contain a zero element.
\end{lemma}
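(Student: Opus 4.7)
The plan is to derive a contradiction by assuming a zero element $z \in B$ exists and then showing $|B|=1$ by exploiting the semi-brace axiom
$$a \circ (b \cdot c) = (a \circ b) \cdot \bigl(a \circ (\bar{a} \cdot c)\bigr).$$
The key idea is that the zero $z$ absorbs everything multiplicatively, which will collapse one side of this identity, while the group structure $(B,\circ)$ is rigid enough to force triviality.

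First I would specialize $c = z$. Because $z$ is multiplicatively absorbing, $b \cdot z = z$ and $\bar{a} \cdot z = z$, so both $a \circ (b \cdot c)$ and $a \circ (\bar{a} \cdot c)$ collapse to the single element $a \circ z$. The axiom then reads
$$a \circ z = (a \circ b) \cdot (a \circ z) \quad \text{for all } a,b \in B.$$
The next step is to exploit that $(B,\circ)$ is a group: for fixed $a$, as $b$ runs over $B$, the element $a \circ b$ runs over all of $B$. Hence $a \circ z = x \cdot (a \circ z)$ for every $x \in B$.

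Taking $x = z$ in this last equation, the left-absorbing property of the zero gives $a \circ z = z$ for every $a \in B$. But $(B,\circ)$ being a group means the map $a \mapsto a \circ z$ is a bijection of $B$ onto itself, so its image has cardinality $|B|$. Since the image equals $\{z\}$, this forces $|B| = 1$, contradicting the hypothesis $|B|\geq 2$.

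I don't anticipate a serious obstacle here; the only point to be slightly careful about is using absorption on both sides of the dot product (both the $b \cdot z$ collapse and the $\bar a \cdot z$ collapse) so that the axiom really reduces to the one-variable identity above. Once that reduction is in place, the group-theoretic rigidity of $(B,\circ)$ finishes the argument in one line.
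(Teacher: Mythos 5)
Your proof is correct and follows essentially the same route as the paper: setting $c=\theta$ collapses the semi-brace identity to $a\circ\theta=(a\circ b)\cdot(a\circ\theta)$, and your choice "$x=z$" amounts to the paper's choice $b=\overline{a}\circ\theta$, after which both arguments conclude $a\circ\theta=\theta$ for all $a$ and invoke bijectivity of $a\mapsto a\circ\theta$ to force $|B|=1$. No issues.
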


\begin{proof}
Suppose  $\theta$ is a zero element of $(B,\cdot )$. 
Then, for  $a, b \in B$, 
   $$ a \circ \theta = a \circ ( b \theta) = (a \circ b) \left( a \circ ( \overline{a}\theta)\right) 
     = \left(a \circ b \right) \left( a \circ \theta\right).$$
Hence, for $b=\overline{a} \circ \theta$ we get that
 $a\circ \theta = \theta (a \circ \theta ) =\theta $,
for all $a\in B$.
As $(B,\circ )$ is a group, it follows that $B=\{ \theta \}$, a contradiction.
\end{proof}

Recall that if $S$ is a semigroup then $S^{1}$ denotes the smallest monoid containing $S$.

\begin{lemma} \label{S2isS1S}
Let $(B,\cdot,\circ)$ be a left semi-brace. 
The following properties hold.
\begin{enumerate}
\item[(1)]
$ab=a1_{\circ} b$ for all $a,b\in B$. 
In particular, $B^{2}= B1_{\circ}B$ and $B^{1} 1_{\circ} B^{1} =
B1_{\circ} B \cup \{ 1_{\circ} \}$.
\item[(2)]  $B^{1}1_{\circ}B^{1}$ is a subgroup of the multiplicative semigroup $(B,\circ)$.
In particular, $(B^{1}1_{\circ}B^{1}, \cdot , \circ)$ is a sub semi-brace of $(B,\cdot , \circ)$.
\end{enumerate}
\end{lemma}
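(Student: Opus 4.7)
The plan for part (1) is to substitute $a = 1_\circ$ into the defining relation $a \circ (bc) = (a \circ b)(a \circ (\overline{a}c))$. Since $1_\circ$ is the identity for $\circ$ and $\overline{1_\circ} = 1_\circ$, the right-hand side reduces to $b \cdot (1_\circ \cdot c)$ while the left-hand side is simply $bc$. Associativity of the semigroup operation then yields $bc = b \cdot 1_\circ \cdot c$, i.e.\ $ab = a \, 1_\circ \, b$ for all $a,b \in B$. The equality $B^2 = B \, 1_\circ \, B$ is immediate. For the identity $B^1 1_\circ B^1 = B \, 1_\circ \, B \cup \{1_\circ\}$, expand using $B^1 = B \cup \{1_{B^1}\}$ and note that the mixed pieces $1_{B^1} \cdot 1_\circ \cdot B$ and $B \cdot 1_\circ \cdot 1_{B^1}$ consist of elements $1_\circ b$, respectively $a\,1_\circ$, which by the identity just derived lie in $B \, 1_\circ \, B$; the only remaining element is $1_{B^1} \cdot 1_\circ \cdot 1_{B^1} = 1_\circ$.

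For part (2), write $H = B^1 1_\circ B^1 = B^2 \cup \{1_\circ\}$ using part (1), and verify the subgroup conditions inside the group $(B,\circ)$. Closure under $\circ$ follows directly from the defining relation: for $a,b,c \in B$, $a \circ (bc) = (a \circ b)(a \circ (\overline{a}c)) \in B \cdot B = B^2$, and products involving $1_\circ$ leave the other factor unchanged, so $H \circ H \subseteq H$.

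The main obstacle is closure under $\circ$-inverses, which I would handle by first establishing the nonobvious fact that $1_\circ$ itself lies in $B^2$. For this, take any $y \in B$ (such $y$ exists since $1_\circ \in B$), set $h = y \cdot y \in B^2$, and apply the defining relation with $a = \overline{h}$ and $b = c = y$ to get
\[
1_\circ \;=\; \overline{h} \circ h \;=\; (\overline{h} \circ y) \cdot (\overline{h} \circ (h y)),
\]
an explicit factorization $1_\circ = u v$ with $u, v \in B$. Equipped with this, for any $h' \in B^2$ the same relation yields
\[
\overline{h'} \;=\; \overline{h'} \circ 1_\circ \;=\; \overline{h'} \circ (u v) \;=\; (\overline{h'} \circ u) \cdot (\overline{h'} \circ (h' v)) \;\in\; B^2,
\]
using $\overline{\overline{h'}} = h'$. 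This closes $H$ under inverses and proves the subgroup statement; the ``in particular'' sub semi-brace assertion follows because $H = B^2 \cup \{1_\circ\}$ is automatically closed under $\cdot$ (products of elements of $B^2$ lie in $B^2$, and multiplying by $1_\circ$ stays in $B$, hence in $B^2$) and the defining identity restricts to $H$.
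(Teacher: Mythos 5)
Your proposal is correct and follows essentially the same route as the paper: part (1) by specializing the semi-brace identity at $a=1_\circ$, and part (2) by first producing a factorization $1_\circ = uv$ with $u,v\in B$ (you use $\overline{yy}\circ(yy)$, the paper uses $\overline{a1_\circ b}\circ(a1_\circ b)$ — the same trick) and then expanding $\overline{h'}=\overline{h'}\circ(uv)$ to land in $B^2$. The only cosmetic quibble is the phrase ``stays in $B$, hence in $B^2$'': what you mean (and what is true) is that a product of $1_\circ$ with an element of $B$ lies in $B\cdot B=B^2$.
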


\begin{proof}
(1)  Let $a,b\in B$. Clearly the semi-brace identity yields
$ab =1_{\circ} \circ (ab) = (1_{\circ} \circ a) (1_{\circ} \circ (1_{\circ} b)) = a1_{\circ}b$.

(2) First, we show that the ideal $B^{1}1_{\circ}B^{1}$ is multiplicatively 
closed for the operation $\circ$. Let $a,b,c,d \in B^{1}$. 
If both $c$ and $d$ are the multiplicative  identity of $(B^{1},\cdot )$, 
then 
   $$ (a1_{\circ} b) \circ (c1_{\circ} d)  = (a1_{\circ} b) \circ 1_{\circ} = 
        a1_{\circ}b \in B^1 1_{\circ} B^1.$$
Suppose next that  $c$ or $d$ is an element of $B$. 
As both cases are dealt with  analogously,  assume that $c \in B$. 
Then, 
     $$ (a1_{\circ}b) \circ (c1_{\circ}d) = 
         ((a1_{\circ}b) \circ c) \, ( (a1_{\circ}b) \circ ( \overline{a1_{\circ}b} 1_{\circ}d)) 
         \in B^2.$$ 
Because of  Lemma~\ref{S2isS1S}(1), it follows that $(a1_{\circ} b) \circ 
(c1_{\circ} d) 
\in B^11_{\circ} B^1$, as desired.
	
Second, we show that $B^1 1_{\circ} B^1$ is closed for taking inverses in 
$(B,\circ )$. 
Let $a,b \in B^1$. 
Clearly, $1_{\circ}  \in B^1 1_{\circ} B^1$. 
We need to show that if $a,b\in B^{1}$, not both in $B^{1}\setminus B$, then $\overline{a 1_{\circ} b} \in B^{1}1_{\circ} B^{1}$.
We assume $a\in B$, the other case is proved analogously.
Since, 
   $$ 1_{\circ}  = \overline{a1_{\circ} b} \circ (a1_{\circ} b) 
       = (\overline{a1_{\circ} b} \circ a) ( \overline{a1_{\circ}b}  \circ 
          ( a1_{\circ} b 1_{\circ} b)) \in B^2.$$
We obtain that there exist $z,w \in B$ such that $zw=1_{\circ}$. Hence,
    $$ \overline{a1_{\circ} b} = \overline{a1_{\circ}b} \circ (zw) 
       = (\overline{a1_{\circ} b}\circ z) (\overline{a1_{\circ} b} \circ ( a1_{\circ} b w)) \in
       B^{2}=B^{1}1_{\circ} B^{1}.$$
The last equality makes use of part (1).

\end{proof}

\begin{lemma}\label{subgroepcomplem}
Let $(B,\cdot, \circ)$ be a left semi-brace.
Then
$(B \setminus B^11_{\circ}B^1) \cup \lbrace 1_{\circ} \rbrace$ is a subgroup of $(B,\circ )$.
\end{lemma}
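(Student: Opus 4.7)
My plan is to prove the lemma by establishing the stronger equality $B^1 1_\circ B^1 = B$; the asserted set is then simply $\{1_\circ\}$, which is trivially a subgroup of $(B,\circ)$.

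By Lemma~\ref{S2isS1S}(1) one has $B^1 1_\circ B^1 = B^2 \cup \{1_\circ\}$, so it suffices to show $B = B^2$. The key step is to read the semi-brace identity as an invariance statement for $B^2$ under left $\circ$-translation. For any $a,b,c \in B$,
\[
a \circ (bc) \;=\; (a \circ b) \cdot \bigl(a \circ (\overline{a} c)\bigr) \;\in\; B^2,
\]
so $a \circ B^2 \subseteq B^2$. Substituting $\overline{a}$ for $a$ gives $\overline{a} \circ B^2 \subseteq B^2$, and applying $a \circ (-)$ to both sides yields the reverse inclusion $B^2 \subseteq a \circ B^2$. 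Hence $a \circ B^2 = B^2$ for every $a \in B$, and consequently $B \setminus B^2$ is left $\circ$-invariant under every element of $B$.

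The remainder is purely group-theoretic: the left-regular action of $(B,\circ)$ on itself is transitive, so a subset left-invariant under every $a \in B$ is either empty or equal to $B$. Since $(B,\cdot)$ is a nonempty semigroup, $B^2 \neq \varnothing$, which forces $B \setminus B^2 = \varnothing$, i.e.\ $B = B^2$. Together with Lemma~\ref{S2isS1S}(1) this gives $B^1 1_\circ B^1 = B$, and so $(B \setminus B^1 1_\circ B^1) \cup \{1_\circ\} = \{1_\circ\}$.

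The one nontrivial move is noticing that the semi-brace identity already displays $a \circ (bc)$ as an explicit product of two elements of $B$; once the invariance $a \circ B^2 = B^2$ is in hand, transitivity of the left-regular action collapses the putative subgroup to the identity, and the claim follows without any further calculation. I do not foresee any genuine obstacle beyond this observation.
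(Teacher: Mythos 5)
Your argument is correct, but it takes a genuinely different route from the paper. The paper proves the lemma head-on: for distinct $a,b$ in $D=(B\setminus B^11_{\circ}B^1)\cup\{1_{\circ}\}$ it assumes $\overline{a}\circ b=s1_{\circ}t$ and derives $b=a\circ(s1_{\circ}t)\in B^2\subseteq B^11_{\circ}B^1$, a contradiction; at that stage the paper still allows $B^11_{\circ}B^1$ to be a proper subgroup, and only in Lemma~\ref{basics} does it conclude $B=B^11_{\circ}B^1$, via the fact that a group which is the union of two subgroups equals one of them, together with Lemma~\ref{nozero} to exclude the degenerate case. You instead prove the stronger statement $B=B^2$ directly: the semi-brace identity exhibits $a\circ(bc)$ as a product of two elements of $B$, so $a\circ B^2\subseteq B^2$, the substitution $a\mapsto\overline{a}$ upgrades this to $a\circ B^2=B^2$, and a nonempty subset invariant under the transitive left-regular action of $(B,\circ)$ must be all of $B$. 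Each step checks out ($B^2\neq\varnothing$ since $1_{\circ}\cdot 1_{\circ}\in B^2$), and the conclusion renders the set in the lemma equal to $\{1_{\circ}\}$. What your version buys is economy: it subsumes Lemma~\ref{basics}(1), avoids the union-of-two-subgroups argument and the appeal to Lemma~\ref{nozero}, and needs no case split on $|B|$. What the paper's version buys is locality: it uses only the computation already present in Lemma~\ref{S2isS1S} and keeps each lemma's burden minimal, at the cost of deferring the identification $B=B^11_{\circ}B^1$ to a later step.
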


\begin{proof}
Put $D = (B \setminus B^11_{\circ} B^1) \cup \lbrace 1_{\circ} \rbrace$. 
We need to show that $\overline{a} \circ b \in D$ for distinct elements $a$ and $b$ in $D$.
Suppose the contrary, that is, assume there exist distinct $a,b\in D$ such that 
$ \overline{a} \circ b = s 1_{\circ} t$.
Note that not both $s$ and $t$ can be in $B^1\setminus B$, as $a\neq b$. Hence, 
using the semi-brace identity, we get that
$b = a \circ (s1_{\circ}t) \in B^2$. 
So, by Lemma~\ref{S2isS1S}, $b\in  B^1 1_{\circ}  B^1$,  a contradiction.

\end{proof}

\begin{lemma} \label{basics}
	Let $(B,\cdot, \circ)$ be a left semi-brace.
The following properties hold.
\begin{enumerate}
\item  $ B = B^1 1_{\circ} B^1$.
\item  $B^1_{\circ} 1_{\circ}$ is a subgroup of $(B,\circ)$ and $1_{\circ}$ is an idempotent of $(B,\cdot)$.
In particular, $B^{1}1_{\circ}= B1_{\circ}$, $1_{\circ} B=1_{\circ} B^{1}$, $B=B^{1}1_{\circ} B^{1}=B1_{\circ}B$ and thus $B1_{\circ}$ is a sub semi-brace of $B$.
\item $(1_{\circ}B,\circ)$ is a semigroup. In particular,  if $B$ is finite then $(1_{\circ}B,\circ)$ is a subgroup of $(B,\circ )$ and thus $1_{\circ}B$ is a subsemi-brace of $B$. 
\end{enumerate}
\end{lemma}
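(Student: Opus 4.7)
The plan is to prove the three claims in order, making use of Lemmas~\ref{S2isS1S}, \ref{subgroepcomplem}, and~\ref{nozero}. For part~(1), I would invoke Lemmas~\ref{S2isS1S}(2) and~\ref{subgroepcomplem}: both $H := B^1 1_\circ B^1$ and $D := (B \setminus H) \cup \{1_\circ\}$ are subgroups of $(B,\circ)$, with $H \cup D = B$ and $H \cap D = \{1_\circ\}$. Since a union of two subgroups is a subgroup only when one contains the other, either $H = B$ or $H = \{1_\circ\}$. The second case forces $ab = 1_\circ$ for all $a,b \in B$, making $1_\circ$ a multiplicative zero, which by Lemma~\ref{nozero} requires $|B|=1$ (trivial). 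Hence $B = H$.

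For part~(2), the crux is that $1_\circ$ is idempotent. Specializing the semi-brace identity at $a=b=c=1_\circ$ gives $1_\circ^2 = 1_\circ^3$, so $e := 1_\circ^2$ satisfies $e \cdot 1_\circ = e$. Now take $a := \overline{e}$ (so $\overline{a} = e$) and apply the semi-brace identity at $(a,b,c) = (\overline{e}, e, 1_\circ)$: since $e \cdot 1_\circ = e$, both occurrences of $\overline{e} \circ (e \cdot 1_\circ)$ collapse to $\overline{e} \circ e = 1_\circ$, and the identity reduces to $1_\circ = 1_\circ \cdot 1_\circ$. Thus $1_\circ$ is idempotent. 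Consequently $1_\circ \in B 1_\circ$ gives $B^1 1_\circ = B 1_\circ$ (dually $1_\circ B^1 = 1_\circ B$), and the characterization $B 1_\circ = \{x \in B : x \cdot 1_\circ = x\}$ becomes transparent since $(a 1_\circ)\cdot 1_\circ = a 1_\circ^2 = a 1_\circ$.

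For the subgroup property of $B 1_\circ$ in $(B,\circ)$, I would extract the identity $X = X \cdot \lambda_X(1_\circ)$ (where $\lambda_X(c) := X \circ (\overline{X} c)$) from $X = X \circ 1_\circ = X \circ (1_\circ \cdot 1_\circ)$. This implies: $\overline{X} \in B 1_\circ$ forces $\lambda_X(1_\circ) = X \circ \overline{X} = 1_\circ$, hence $X = X \cdot 1_\circ \in B 1_\circ$; swapping the roles of $X$ and $\overline{X}$ yields closure of $B 1_\circ$ under $\circ$-inverses. Closure under $\circ$ then follows: for $X, Y \in B 1_\circ$, $\overline{X} \in B 1_\circ$ gives $\lambda_X(1_\circ) = 1_\circ$, so $X \circ Y = X \circ (Y \cdot 1_\circ) = (X \circ Y) \cdot 1_\circ \in B 1_\circ$. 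The remaining consequences $B = B 1_\circ B$ and that $B 1_\circ$ is a sub semi-brace (multiplicative closure being $(a 1_\circ)(b 1_\circ) = ab \cdot 1_\circ$ by Lemma~\ref{S2isS1S}(1)) are then routine.

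Part~(3) follows by a direct computation: for $X = 1_\circ c$ and $Y = 1_\circ b$, the semi-brace identity gives $X \circ Y = (X \circ 1_\circ) \cdot (X \circ (\overline{X} b)) = X \cdot (X \circ (\overline{X} b)) \in (1_\circ B) \cdot B \subseteq 1_\circ B$, so $(1_\circ B, \circ)$ is a subsemigroup; when $B$ is finite this is automatically a subgroup containing $1_\circ$, and with multiplicative closure via Lemma~\ref{S2isS1S}(1) it is a sub semi-brace. I expect the main obstacle to be the idempotency step in part~(2): the nonobvious choice $a = \overline{e}$ is what makes the semi-brace identity close up in terms of known quantities.
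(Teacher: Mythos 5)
Your proof is correct, and parts (1) and (3) follow the paper's argument essentially verbatim (union of the two subgroups from Lemmas~\ref{S2isS1S} and~\ref{subgroepcomplem} with trivial intersection, then Lemma~\ref{nozero}; and the one-line computation $(1_{\circ}x)\circ(1_{\circ}y)=(1_{\circ}x)\bigl((1_{\circ}x)\circ(\overline{1_{\circ}x}\,y)\bigr)\in 1_{\circ}B$). The only genuine divergence is in part (2), and it is worth noting. The paper first expands $1_{\circ}=\overline{x1_{\circ}}\circ(x1_{\circ})=(\overline{x1_{\circ}}\circ x)1_{\circ}$ to produce an element $z$ with $z1_{\circ}=1_{\circ}$, uses $z$ to show $\overline{x1_{\circ}}\in B1_{\circ}$ directly, and only then extracts idempotency of $1_{\circ}$ from $1_{\circ}=z1_{\circ}=z1_{\circ}1_{\circ}=1_{\circ}1_{\circ}$; closure under $\circ$ comes last. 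You invert this order: you get $1_{\circ}1_{\circ}=1_{\circ}$ up front by the substitution $(a,b,c)=(\overline{1_{\circ}^{2}},1_{\circ}^{2},1_{\circ})$ (after noting $1_{\circ}^{2}=1_{\circ}^{3}$), and then everything else flows from the single identity $X=X\lambda_{X}(1_{\circ})$; in particular your closure under inverses is the clean implication ``$\overline{X}\in B1_{\circ}\Rightarrow X\in B1_{\circ}$'' applied to $\overline{X}$, rather than the paper's explicit computation of $\overline{x1_{\circ}}$ via $z$. Both are valid; your route isolates the idempotency of $1_{\circ}$ as the lone nontrivial input and makes the rest of part (2) formally mechanical, at the cost of the one slightly unmotivated substitution you yourself flag, while the paper's version produces the auxiliary element $z$ that it reuses twice. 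I checked each of your computations ($e1_{\circ}=e$, the collapse to $1_{\circ}=1_{\circ}\cdot 1_{\circ}$, $X=X\lambda_{X}(1_{\circ})$, and the two closure arguments) and they are all sound.
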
 

\begin{proof}
From Lemma~\ref{S2isS1S} and Lemma~\ref{subgroepcomplem} we know that the group $(B,\circ)$ is the union of the two subgroups $B^{1}1_{\circ}B^{1}$ and $(B\setminus B^{1}1_{\circ}B^{1}) \cup \{ 1_{\circ} \}$.
Because the intersection of these two subgroups is $\{ 1_{\circ}\}$ it follows that $B=B^{1}1_{\circ}B^{1}$ or
$B^{1}1_{\circ}B^{1}=\{ 1_{\circ} \}$.
In the second case, $b1_{\circ} =1_{\circ} =1_{\circ} b$ for all $b\in B$, in 
contradiction with Lemma~\ref{nozero} if $|B| \geq 2$.
This proves the first part.

To prove the second part, put $K=B^{1} 1_{\circ}$.
Clearly, $1_{\circ} \in K$. 
Let $x \in B$. Then, by Lemma~\ref{S2isS1S},
    $$ 1_{\circ}  = \overline{x1_{\circ}} \circ x1_{\circ} =( \overline{x1_{\circ}} \circ x) ( \overline{x1_{\circ}} \circ (x1_{\circ}1_{\circ})) = (\overline{x1_{\circ}} \circ x) 1_{\circ}.$$
Thus, there exists $z \in B$ such that $z1_{\circ} = 1_{\circ}$. 
Thus, again by Lemma~\ref{S2isS1S},
    $$\overline{x1_{\circ}} = \overline{x1_{\circ}} \circ (z1_{\circ}) = (\overline{x1_{\circ}} \circ z) (\overline{x1_{\circ}} \circ (x1_{\circ}1_{\circ})) = (\overline{x1_{\circ}} \circ z) 1_{\circ} \in B1_{\circ}.$$
Moreover, by Lemma~\ref{S2isS1S}(1),
          $$ 1_{\circ} = z1_{\circ} = z1_{\circ}1_{\circ} = 
          1_{\circ}1_{\circ}.$$
By the previous paragraph, for $x,y\in B$,
    \begin{eqnarray*}
    (x1_{\circ}) \circ (y1_{\circ}) &=&( (x1_{\circ}) \circ y) (x1_{\circ} \circ \overline{x1_{\circ}}1_{\circ})
     \, = \, ((x1_{\circ}) \circ y) (x1_{\circ} \circ (\overline{x1_{\circ}}))\\
     & =& ((x1_{\circ}) \circ y) 1_{\circ} \in B1_{\circ}.
     \end{eqnarray*}
Hence we have shown that indeed  $(B1_{\circ}, \circ)$ is a subgroup of  
$(B,\circ )$.

For the third part it is sufficient to note that, for $x,y\in B$, 
	$$(1_{\circ}x) \circ (1_{\circ}y) = (1_{\circ}x) (1_{\circ}x \circ (\overline{1_{\circ}x} 1_{\circ}y)) \in 1_{\circ}B.$$
\end{proof}

\begin{lemma}\label{1s1subgroup}
Let $(B,\cdot,\circ)$ be a left semi-brace such that $1_{\circ}B$ is a subgroup of $(B,\circ)$. 
Then, $1_{\circ}B1_{\circ}$ is a subgroup of the semigroup $(B,\cdot)$. 
In particular, if $B$ is finite, then $1_{\circ}B1_{\circ}$ is a subgroup of 
$(B,\cdot)$.
\end{lemma}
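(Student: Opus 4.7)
The plan is to verify that $1_\circ B 1_\circ$ is a submonoid of $(B,\cdot)$ with identity $1_\circ$, to exhibit a right $\cdot$-inverse for each of its elements, and then to invoke the elementary fact that a monoid in which every element has a right inverse is already a group.

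Checking the submonoid structure is routine: by Lemma~\ref{S2isS1S}(1) the identity $xy = x 1_\circ y$ holds, and by Lemma~\ref{basics}(2) the element $1_\circ$ is an idempotent of $(B,\cdot)$, which together give $(1_\circ a 1_\circ)(1_\circ b 1_\circ) = 1_\circ (ab) 1_\circ \in 1_\circ B 1_\circ$. Since every element of $1_\circ B 1_\circ$ lies in $1_\circ B \cap B 1_\circ$, the idempotent $1_\circ$ acts as a two-sided $\cdot$-identity on the whole set.

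The main obstacle is producing a right $\cdot$-inverse for an arbitrary $u \in 1_\circ B 1_\circ$, and here the hypothesis that $(1_\circ B,\circ)$ is a subgroup of $(B,\circ)$ is what makes the (asymmetric) semi-brace identity cooperate. Because $u \in 1_\circ B$, the inverse $\overline u$ also belongs to $1_\circ B$ and thus satisfies $1_\circ \cdot \overline u = \overline u$; expanding $u \circ \overline u = u \circ (1_\circ \cdot \overline u)$ by the semi-brace axiom gives
\[
1_\circ \;=\; (u \circ 1_\circ)\cdot\bigl(u \circ (\overline u \cdot \overline u)\bigr) \;=\; u \cdot \bigl(u \circ (\overline u \cdot \overline u)\bigr),
\]
where I used $u \circ 1_\circ = u$. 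Setting $v_0 := u \circ (\overline u \cdot \overline u) \in B$ and $v := 1_\circ v_0 1_\circ \in 1_\circ B 1_\circ$, the relations $u \cdot 1_\circ = u$ and $u v_0 = 1_\circ$ yield $u v = u v_0 \cdot 1_\circ = 1_\circ$, giving a right $\cdot$-inverse of $u$ inside $1_\circ B 1_\circ$.

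To finish, I would appeal to the elementary monoid lemma: in a monoid, if every element has a right inverse then every element has a two-sided inverse, since given $ab = 1_\circ$ and $bc = 1_\circ$ one has $a = a(bc) = (ab)c = c$, whence $ba = 1_\circ$. This promotes the right inverses constructed above to two-sided inverses, proving $(1_\circ B 1_\circ, \cdot)$ is a group. The finite case of the ``in particular'' clause is immediate from Lemma~\ref{basics}(3), which guarantees that $(1_\circ B,\circ)$ is a subgroup when $B$ is finite.
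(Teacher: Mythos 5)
Your proof is correct and follows essentially the same route as the paper: both use the hypothesis that $(1_{\circ}B,\circ)$ is a subgroup together with the semi-brace identity to factor $1_{\circ}$ as $u\cdot\bigl(u\circ(\overline{u}\cdot\overline{u})\bigr)$, massage the right factor into $1_{\circ}B1_{\circ}$ via Lemma~\ref{S2isS1S}(1) and the idempotency of $1_{\circ}$, and then invoke the standard fact that a monoid with right inverses is a group. Your version is marginally more explicit (naming the right inverse and spelling out the monoid lemma the paper leaves implicit), but the argument is the same.
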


\begin{proof}
From Lemma~\ref{basics},  $1_{\circ}$ is the  identity of the monoid $(1_{\circ}B1_{\circ},\cdot )$. 
To prove it is a group, let $1_{\circ}x1_{\circ} \in 1_{\circ}B1_{\circ}$. 
By the assumption, there exists  $y \in B$ such that 
    $$ 1_{\circ} = (1_{\circ}x) \circ (1_{\circ} y) = 1_{\circ} x (1_{\circ}x \circ (1_{\circ}yy)).$$ 
Hence, there exists $z \in B$ such that $ 1_{\circ}xz = 1_{\circ}$. 
So, because of Lemma \ref{S2isS1S},  
    $$ 1_{\circ} = 1_{\circ}xz= 1_{\circ}x1_{\circ}1_{\circ}z.$$ 
As $1_{\circ}$ is an idempotent, it follows that 
    $$ 1_{\circ} = 1_{\circ}1_{\circ} = 1_{\circ}x1_{\circ}1z1_{\circ}.$$ 
Thus every element of $1_{\circ} B 1_{\circ}$ has a right inverse in the monoid $(1_{\circ}B1_{\circ},\cdot)$, as desired.
\end{proof}

\begin{theorem} \label{CompSimp}
Let $(B,\cdot,\circ)$ be a left semi-brace. 
If $1_{\circ}B$ is a subgroup of $(B,\circ)$ (for example if $B$ is finite), 
then $(B,\cdot)$ is a completely simple semigroup with maximal subgroup $1_{\circ}B1_{\circ}$.
\end{theorem}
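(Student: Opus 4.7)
The plan is to verify, via the standard characterization (Theorem 3.3.1 of \cite{Howie}), that $(B,\cdot)$ is a simple semigroup containing a primitive idempotent. Both ingredients follow almost immediately from the facts already established in the preceding lemmas, namely that $B = B1_{\circ}B$ (Lemma~\ref{basics}) and that $G := 1_{\circ}B1_{\circ}$ is a subgroup of $(B,\cdot)$ with identity $1_{\circ}$ (Lemma~\ref{1s1subgroup}).

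For simplicity, I would take an arbitrary two-sided ideal $I$ of $(B,\cdot)$ and pick $b \in I$. Since $1_{\circ} \in B$, the element $1_{\circ}b1_{\circ}$ lies in $I$ and also in the group $G$; choosing $g \in G$ with $g \cdot (1_{\circ}b1_{\circ}) = 1_{\circ}$ then forces $1_{\circ} \in I$. For an arbitrary $a \in B$, Lemma~\ref{basics} supplies $x,y \in B$ with $a = x 1_{\circ} y$, so $a = x \cdot 1_{\circ} \cdot y \in B \cdot I \cdot B \subseteq I$; hence $I = B$ and $(B,\cdot)$ is simple.

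For the existence of a primitive idempotent, I would show that $1_{\circ}$ itself is primitive. If $e \in E(B)$ satisfies $e \cdot 1_{\circ} = 1_{\circ} \cdot e = e$, then $e = 1_{\circ} e 1_{\circ} \in G$; since $G$ is a group its only idempotent is $1_{\circ}$, so $e = 1_{\circ}$. Combining with the previous step, $(B,\cdot)$ is completely simple.

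Finally, for the maximal subgroup, recall that for an idempotent $e$ in any semigroup $S$ the maximal subgroup containing $e$ coincides with the group of units of the monoid $eSe$. Here $eSe = 1_{\circ}B1_{\circ}$ is already a group by Lemma~\ref{1s1subgroup}, so it equals its own group of units, and is the maximal subgroup at $1_{\circ}$. The main potential obstacle, namely establishing that $B = B1_{\circ}B$ and that $1_{\circ}B1_{\circ}$ is a group, has already been dealt with in the preparatory lemmas; once these are in hand, the remainder is a routine assembly of standard semigroup-theoretic facts.
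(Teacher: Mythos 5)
Your proposal is correct and follows essentially the same route as the paper: both establish simplicity from $B=B1_{\circ}B$ together with the fact that $1_{\circ}B1_{\circ}$ is a group (the paper phrases this as $B=BbB$ for all $b$, you phrase it as every two-sided ideal containing some $b$ must contain $1_{\circ}$ and hence all of $B$), and both deduce that $1_{\circ}$ is a primitive idempotent because the only idempotent of the group $1_{\circ}B1_{\circ}$ is its identity. Your explicit identification of the maximal subgroup at $1_{\circ}$ as the group of units of $1_{\circ}B1_{\circ}$ is a welcome extra detail that the paper leaves implicit.
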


\begin{proof}
From Lemma~\ref{basics} we know that $B=B1_{\circ}B$. Let $b \in B$.
Hence, by the assumption and Lemma~\ref{1s1subgroup}, $B=B1_{\circ}B = B1_{\circ} b 1_{\circ} B$.
So, by Lemma~\ref{S2isS1S}, $B=BbB$ for every $b\in B$.
Therefore, every principal ideal, and thus every ideal,  of $B$ is trivial, i.e. $(B,\cdot )$
is a simple semigroup.
Since $1_{\circ} B 1_{\circ}$ is a subgroup of $(B,\cdot )$ by Lemma~\ref{1s1subgroup}, the idempotent $1_{\circ}$ is a
primitive idempotent of $(B,\cdot)$. Hence, $(B,\cdot )$ is a completely simple 
semigroup.
\end{proof}

Because of the earlier mentioned result on completely simple semigroups, we get from the theorem (under the same assumptions)
that the multiplicative semigroup of the left semi-brace $(B,\cdot )$ is  a matrix semigroup of the type
$\mathcal{M}(G, I,J,P)=\{ (g,i,j)\mid g\in G,\; i\in I, \; j\in J\}$. 
Let us fix an index in both $I$ and $J$. For simplicity we denote both as $1$. 
Also denote $1$ for the identity in $G$. 
It  also is  well known that (see \cite[Theorem 3.4.2]{Howie}), without loss of generality, one may assume that $p_{1j}=p_{i,1}=1$ for all $i\in I$ and $j\in J$
and, as $1_{\circ}$ is an idempotent, $1_{\circ}=(1,1,1)$.

\begin{corollary} \label{rowcol}
Let $(B,\cdot,\circ)$ be a completely simple left semi-brace. Then, as multiplicative semigroups, 
$(B,\cdot)\cong \mathcal{M}(G, I,J,\mathcal{I}_{|J|,|I|})=\{ (g,i,j)\mid g\in 
G,\; i\in I, \; j\in J\}$  with $G=1_{\circ} B 1_{\circ}$.
Furthermore,
\begin{enumerate}
\item  the first row $R=\left\lbrace (g,1,j) \mid g \in G, j \in J \right\rbrace$ is a left cancellative left subsemi-brace, and 
\item the first column $K= \left\lbrace (g,k,1) \mid g \in G, k \in I \right\rbrace$ is a right cancellative left subsemi-brace.
\item $(G,1,1)$ is a left subsemi-brace. Moreover, 
$(G,\cdot)$ is a group.
\end{enumerate}
If, furthermore, $G=\{ 1 \}$ then both $R$ and $K$ are two-sided semi-braces.
\end{corollary}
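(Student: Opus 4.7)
The plan is to identify the three distinguished subsets $R$, $K$ and $(G,1,1)$ with subsets of $B$ already analyzed in the preceding lemmas, so that closure under $\circ$ and the group-theoretic content come for free, and then to verify the remaining matrix-theoretic and right-brace properties directly.

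First, the isomorphism $(B,\cdot)\cong \mathcal{M}(G,I,J,\mathcal{I}_{|J|,|I|})$ is immediate from Theorem~\ref{CompSimp} together with the standard normalization of the sandwich matrix of a completely simple semigroup recalled just before the statement; under this identification $1_{\circ}=(1,1,1)$ and $G=1_{\circ}B1_{\circ}$. A short check in matrix form shows that $R=1_{\circ}B$, $K=B1_{\circ}$, and $(G,1,1)=1_{\circ}B\cap B1_{\circ}=1_{\circ}B1_{\circ}$ (for the last equality note that $x\in 1_{\circ}B\cap B1_{\circ}$ forces $1_{\circ}x=x=x1_{\circ}$, since $1_{\circ}$ is an idempotent by Lemma~\ref{basics}).

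Next I would verify closure and the cancellativity claims. Closure under $\cdot$ and left (resp.\ right) cancellativity follow immediately from matrix arithmetic: $(g,1,j)(h,1,l)=(gh,1,l)$ handles $R$, and $(g,k,1)(h,l,1)=(gh,k,1)$ handles $K$. Closure under $\circ$ is supplied by Lemma~\ref{basics}: part~(3) says that $R=1_{\circ}B$ is a subgroup of $(B,\circ)$, and part~(2) says the same for $K=B1_{\circ}$. Since $(G,1,1)=R\cap K$ is the intersection of two subgroups of $(B,\circ)$, it too is a subgroup, giving closure of $(G,1,1)$ under $\circ$; Lemma~\ref{1s1subgroup} then yields that $(1_{\circ}B1_{\circ},\cdot)$ is a group, i.e.\ $(G,\cdot)$ is a group. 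In each case the semi-brace identity is inherited from $B$, so $R$, $K$ and $(G,1,1)$ are indeed left subsemi-braces.

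The last clause, concerning the case $G=\{1\}$, is where I expect the real work. Here the matrix multiplication collapses to $(i,j)(k,l)=(i,l)$, so $R$ becomes a right zero semigroup and $K$ a left zero semigroup. To upgrade both to \emph{two-sided} semi-braces, one must verify the mirror (right semi-brace) identity, and this is the only step that is not merely a restatement of an earlier lemma or of matrix arithmetic. The plan is to exploit the extreme degeneracy: for $a,b,c\in R$ one has $bc=c$ (and for $a,b,c\in K$ one has $bc=b$), so the right-hand and left-hand sides of the right semi-brace identity each collapse to a single $\circ$-product involving only two of the three variables, reducing the verification to a computation in the group $(R,\circ)$ (resp.\ $(K,\circ)$) that should follow from the left semi-brace axiom applied to the same triple in $B$.
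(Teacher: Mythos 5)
There are two genuine gaps in your argument, both at places you dismiss as ``immediate.''

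First, the claim that the sandwich matrix can be taken to be $\mathcal{I}_{|J|,|I|}$ does \emph{not} follow from Theorem~\ref{CompSimp} plus the standard Rees normalization. The normalization recalled before the statement only gives $p_{1j}=p_{i1}=1$ for all $i,j$; for a general completely simple semigroup the remaining entries of $P$ cannot be normalized away (an all-ones sandwich matrix is equivalent to $(B,\cdot)$ being a rectangular group, i.e.\ to $E(B)$ being a subsemigroup, which fails for general completely simple semigroups). The paper obtains $p_{jk}=1$ for all $j,k$ from the semi-brace structure: by Lemma~\ref{S2isS1S}(1), $(g,i,j)(1,1,1)(h,k,l)=(g,i,j)(h,k,l)$, whence $g\,p_{j1}p_{1k}\,h=g\,p_{jk}\,h$ and so $p_{jk}=p_{j1}p_{1k}=1$. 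Without this step the first assertion of the corollary, and everything you build on the explicit matrix multiplication afterwards, is unproved.

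Second, you claim closure of $R=1_{\circ}B$ under $\circ$ ``is supplied by Lemma~\ref{basics}(3).'' That lemma only asserts $(1_{\circ}B,\circ)$ is a sub\emph{semigroup} in general, and upgrades it to a subgroup only when $B$ is finite; the corollary is stated for arbitrary completely simple left semi-braces, with no finiteness hypothesis. The paper therefore proves closure under $\circ$-inverses by hand, computing
$(1,1,1)=\overline{(g,1,j)}\,\bigl(\overline{(g,1,j)}\circ((g,1,j)(g,1,j))\bigr)$
and reading off $\overline{(g,1,j)}\in R$ from the matrix form. You need an argument of this kind (or an explicit restriction to finite $B$). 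By contrast, your use of Lemma~\ref{basics}(2) for $K=B1_{\circ}$ is fine, as is the identification of $(G,1,1)$ with $R\cap K$, and your sketch of the final clause (the two-sided semi-brace identity for left/right zero semigroups) is the right computation and matches the paper's, though you should actually carry it out rather than assert it ``should follow.''
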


\begin{proof}
Because of Theorem~\ref{CompSimp} and the above remarks, we may assume that, as 
a multiplicative semigroup,
$B=\mathcal{M}(G, I,J,P)$ with $p_{i1}=p_{1j}=1$ and $1_{\circ} =(1,1,1)$.
Let $(g,i,j), \, (h,k,l) \in B$. Because of  Lemma~\ref{S2isS1S},
$(g,i,j) (1,1,1)(h,k,l)=(g,i,j)(h,k,l)$ and thus $gp_{j1}p_{1k}h=gp_{jk}h$. 
Hence, $p_{jk}=p_{j1}p_{1k}=1$, for all $j\in J$ and $k\in I$.
This proves the first part of the statement.

Because of Lemma~\ref{basics}, we know that $B1_{\circ}$ is a left 
subsemi-brace. Clearly $B1_{\circ}=K$ and $K$ is a right cancellative 
multiplicative semigroup.
Note that $R=1_{\circ}B$. Hence, from Lemma~\ref{basics}, we know that 
$(R,\circ )$ is a submonoid of $(B,\circ )$. To prove part $(1)$ it remains to 
show that $(R,\circ)$ is closed under taking inverses. So let $(g,1,j)\in R$. 
Then,
\begin{eqnarray*}
(1,1,1) &=&\overline{(g,1,j)} \circ (g,1,j)
                        = \overline{(g,1,j)} \circ  \left( (1,1,1)\, (g,1,j) \right)\\
 & =& (\overline{(g,1,j)} \circ (1,1,1) )\, \left( \overline{(g,1,j)} \circ ((g,1,j)(g,1,j)) \right)\\
 & =& \overline{(g,1,j)}  \, \left(\overline{(g,1,j)} \circ ((g,1,j)(g,1,j)\right).
 \end{eqnarray*}
Hence, it follows that $\overline{(g,1,j)}\in R$, as desired.

That  $(G,1,1)$ is a left subsemi-brace follows at once from the fact that it 
is the intersection of the subsemi-braces  $R$ and $K$. By the definition of 
$\mathcal{M}(G,I,J,\mathcal{I}_{|J|,|I|})$, $(G,\cdot)$ is a subgroup of 
$(B,\cdot)$.

Finally, assume $G$ is trivial. We prove that $K$ also is a right semi-brace. Note that $K$ is a left zero semigroup, i.e. $ab=a$ for all $a,b\in K$.
Let $a,b,c \in K$.  Then, $ (bc) \circ a = b \circ a = (b\overline{a}) \circ a =\left( (b\overline{a}) \circ a \right) (c\circ a)$. 
So, $K$ is a right semi-brace. This shows that $K$ is a two-sided semi-brace.

Second, we prove that $R$ is a two-sided 
semi-brace. Recall that if $G$ is trivial, $(R,\cdot,\circ)$ is a right zero 
left semi-brace. Then, for $a,b,c \in R$, $ (b c) \circ a = c \circ a$. 
Moreover, $ \left( \left( b\overline{a} \right) \circ a \right) 
\left( c \circ a \right) = c \circ a$. It follows that indeed $(R,\cdot,\circ)$ is a two-sided semi-brace.
\end{proof}

 Throughout we will use the notation used in Corollary 2.9 for a completely 
 simple left semi-brace.
So,  an element $b\in B$ can be written as $(g,i,j)$ and we call $g=(g,1,1)\in  
G=1_{\circ} B 1_{\circ}$ the group component of $B$.
Note that from the proof of the last part one has proven the following.

\begin{example}
Let $(B,\cdot )$ be a right (repectively left) zero semi-group. If $(B,\circ)$ 
is a group, then $(B,\cdot , \circ)$ is a two-sided semi-brace.
\end{example}

It has been shown in Corollary~\ref{rowcol}   that if $B=\mathcal{M}(G, 
I,J,\mathcal{I}_{|J|,|I|})$ is a completely simple semi-brace then $G$ is a 
skew left brace.
Conversely, if $(G,\cdot , \circ_1)$ is a skew left brace  and  also 
$(I,\circ_2 )$ and $(J,\circ_3 )$ are groups   then the multiplicative semigroup 
$\mathcal{M}(G,I,J,\mathcal{I}_{|J|,|I|})$ 
becomes a left semi-brace for the group operation $\circ$ defined as follows
  \begin{eqnarray}
  (g,i,j) \circ (h,k,l) &=& (g\circ_1 h, i\circ_2 k, j\circ_3 l). \label{productcompetelysimple}
  \end{eqnarray}
However, not every completely simple semi-brace is of this type as shown by the following example.
The aim  of this section 
is to determine the structure of completely simple
left semi-braces.

\begin{example}\label{special}
Consider the set $B=\mathbb{Z}_3\times \mathbb{Z}_2$ This is a semigroup for the operation 
$(x,y)(z,w) = (x,y+w)$, where $+$ is the sum in $\mathbb{Z}_2$. Consider the 
bijection $B \longrightarrow C_6 = 
\left<\xi\mid\xi^6 = 1 \right>$ defined by $(i,j) \mapsto \xi^{i + 3j}$ for $i 
\in \left\lbrace 0,1,2 \right\rbrace$ and $j \in \left\lbrace 0,1 
\right\rbrace$. Hence,  the  group structure of $C_6$ induces a group structure 
on $B$.
Then $B$ is a right cancellative left semi-brace such that the idempotents of 
$(B,\cdot)$ do not form a subsemi-brace of $B$. In particular, this is not an 
example 
of the previous type. 
\end{example}

\begin{proof}
	Obviously, the semigroup $(B,\cdot)$ is right cancellative. Identify the elements of $B$ with their images under the bijection $(i,j) \mapsto \xi^{i+3j}$.
	We will check the equation $$ \xi^i \circ ( \xi^k \xi^j ) = \xi^i \circ \xi^k \xi^i \circ ( \xi^{6-i} \xi^j).$$
	Clearly, for any $0 \leq t <6$, $\xi^t$ and $\xi^{t+3}$ have the same first 
	component. Thus, $\xi^t \xi^3 = \xi^{t+3}.$ Notice that the elements with 
	$0$ in their $\mathbb{Z}_2$-component are precisely the idempotents of 
	$(B,\cdot)$. Clearly, these idempotents are right identities.
	To prove the semi-brace identity, we consider two mutually exclusive cases:
	
	First, suppose $\xi^j$ is an idempotent. Then, $$ \xi^i \circ ( \xi^k 
	\xi^j) = \xi^i \circ \xi^k = \xi^{k+i}.$$
	On the other hand, $$ \left(\xi^i \circ \xi^k\right) \left( \xi^i \circ ( 
	\xi^{6-i} \xi^j)\right) = \xi^{k+i} \left( \xi^i \circ \xi^{6-i}\right) = 
	\xi^{k+i}.$$
	
	Second, suppose now that $\xi^j$ is not an idempotent. Clearly, the 
	equation $ \xi^i \circ (\xi^k \xi^j) = \left( \xi^i \circ \xi^k \right) 
	\left( \xi^i \circ \left( \xi^{6-i} \xi^j \right)\right)$ holds if $i=0$. 
	We will now show the equation also holds for $i=1$, the other values are 
	analogous.  As $\xi^j$ is not an idempotent, $\xi^j = (\alpha, 1)$, for 
	some $\alpha \in \left\lbrace 0, 1, 2 \right\rbrace$. Thus, if $\xi^k = 
	(a,b)$, it follows that $\xi^k \xi^j = (a, b+1)$. Calculating the left 
	hand-side,
	$$ \xi \circ (\xi^k \xi^j) = \xi \circ \xi^{k+3} = \xi^{k+4}.$$
	Similarly, calculating the right hand-side and using the remark above,
	$$ \xi \circ \xi^k \xi \circ (\xi^5 \xi^j) = \xi^{k+1} \xi \circ \xi^2 = \xi^{k+1} \xi^3 = \xi^{k+4}.$$
	Hence, the equality holds.
	
	Clearly, the set of idempotents of $B$ is identified in $C_6$ (under the same identification as before) with $\left\lbrace 1, \xi, \xi^2 \right\rbrace$, which is not a subgroup of $C_6$. Hence, the set of idempotents of $B$ is not a subsemi-brace.
\end{proof}

We finish this section with showing that skew left braces $(B,\cdot , \circ )$ are precisely 
the left semi-braces $(B,\cdot ,\circ)$ with $(B,\cdot )$ a group.
In order to do so it is convenient to introduce the $\lambda$ map associated with a left 
semi-brace $(B,\cdot ,\circ )$.
In case of (skew) braces these maps turned out the be very useful (see for 
example \cite{bachiller2015classification,cedo2014braces,guarnieri2017skew}). 
For $a,b\in B$, note that $a\circ 1_{\circ} b = a(a\circ (\overline{a} b)) $. Hence, one defines 
                       $$\lambda_{a} : B \longrightarrow B: b\mapsto a\circ (\overline{a} b).$$
So, \begin{eqnarray}\label{lambdaproduct} a\circ 1_{\circ} b &=& 
a\lambda_{a}(b) \end{eqnarray} and thus the $\lambda$-maps link the 
multiplicative structure $\cdot$ with the
group structure $\circ$. 

The $\rho$-map associated to the semibrace $B$ is defined as follows
	$$ \rho_a :  B \longrightarrow B : b \mapsto \overline{\left(\overline{b} a \right)} \circ a.
	  $$

\begin{lemma}\label{lambdaidempot}
	Let $\left(B, \cdot, \circ \right)$ be a left semi-brace. For any $a \in 
	B$, $\lambda_a \in \textup{End}(B,\cdot)$. Moreover, $ \lambda: (B,\circ) 
	\longrightarrow \textup{End}(B,\cdot): x \mapsto \lambda_x$ is a 
	homomorphism. Furthermore, for any $a \in B$, we have that $\lambda_a(E(B)) 
	\subseteq E(1_{\circ}B)$.
\end{lemma}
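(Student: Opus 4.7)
The plan is to verify all three assertions by direct manipulation of the defining semi-brace identity $a \circ (bc) = (a \circ b)(a \circ (\overline{a}c))$ together with associativity of $\cdot$ and of $\circ$. For the first assertion, I would write $\lambda_a(bc) = a \circ (\overline{a}\cdot bc) = a\circ((\overline{a}b)\cdot c)$ using associativity of $\cdot$, and then apply the semi-brace identity with $x=\overline{a}b$, $y=c$ to obtain
\[
\lambda_a(bc) = (a\circ(\overline{a}b))\cdot(a\circ(\overline{a}c)) = \lambda_a(b)\,\lambda_a(c),
\]
so $\lambda_a\in\textup{End}(B,\cdot)$.

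For the second assertion, I would start from $\lambda_{a\circ b}(c)=(a\circ b)\circ(\overline{a\circ b}\cdot c)$, use associativity of $\circ$ together with $\overline{a\circ b}=\overline{b}\circ\overline{a}$ to rewrite this as $a\circ\bigl(b\circ((\overline{b}\circ\overline{a})\cdot c)\bigr)$, and then apply the semi-brace identity to the inner expression. This gives
\[
b\circ((\overline{b}\circ\overline{a})\cdot c) = (b\circ(\overline{b}\circ\overline{a}))\cdot(b\circ(\overline{b}c)) = \overline{a}\cdot\lambda_b(c),
\]
so $\lambda_{a\circ b}(c)=a\circ(\overline{a}\cdot\lambda_b(c))=\lambda_a(\lambda_b(c))$, which shows $\lambda$ is a group homomorphism $(B,\circ)\to\textup{End}(B,\cdot)$.

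For the last assertion, I would first observe that applying the semi-brace identity to $a\circ(\overline{a}\cdot b)$ yields $\lambda_a(b)=(a\circ\overline{a})\cdot(a\circ(\overline{a}b))=1_\circ\cdot\lambda_a(b)$, so $\lambda_a(b)\in 1_\circ B$ for every $b\in B$. Then for any $e\in E(B)$, the first assertion gives $\lambda_a(e)^2=\lambda_a(e\cdot e)=\lambda_a(e)$, so $\lambda_a(e)$ is an idempotent lying in $1_\circ B$, i.e.\ $\lambda_a(e)\in E(1_\circ B)$. The only slightly subtle point in the whole argument is recognizing, in part~(2), that $\circ$-associativity lets one peel off the outer factor $a$ so that the semi-brace identity can be applied to the remaining $b$-expression; once this is spotted, no genuine obstacle remains.
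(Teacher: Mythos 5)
Your proof is correct and follows essentially the same route as the paper: the endomorphism property comes from the semi-brace identity applied to $a\circ((\overline{a}b)c)$, and the homomorphism property from peeling off $a$ via $\circ$-associativity and applying the identity to the inner $b$-expression. The paper's proof actually omits the third assertion entirely; your argument for it (that $\lambda_a(b)=1_{\circ}\lambda_a(b)\in 1_{\circ}B$, combined with $\lambda_a$ preserving idempotents) is exactly the intended one and matches what the paper records separately in Lemma~\ref{rhomap1}.
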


\begin{proof}
Let $a,b,x,y \in B$. 
By the semi-brace property, 
   \begin{align*} \lambda_a(x y)  &= a \circ \left( \overline{a}xy \right)  =  \left(a \circ (\overline{a}x) \right) (a \circ ( \overline{a} y)) = \lambda_a(x) \lambda_a(y) . \end{align*}
and
	\begin{eqnarray*}
	  \lambda_{a \circ b}(x) 
	           &=& \left( a \circ b \right) \circ \left( (\overline{a \circ b}) x \right) 
	           = a \circ \left( b \circ \left( (\overline{b} \circ \overline{a}) x \right) \right) \\
	           &= &a \circ \left( (b \circ \overline{b} \circ \overline{a}) \left( b \circ \left( \overline{b} x\right) \right) \right) 
	               = a \circ \left( \overline{a} \lambda_b(x) \right)\\
	             &  = & \lambda_a \lambda_b(x).
	  \end{eqnarray*}
\end{proof}

The $\rho$-maps behave different in general.

\begin{lemma} \label{rhomap1}
	Let $(B,\cdot,\circ)$ be a left semi-brace. Then, for any $x,y \in B$, we 
	have that $\lambda_x(y) \in 1_{\circ}B$. If $(B,\cdot,\circ)$ is a completely 
	simple semi-group, then $\rho_{x}(y) \in B1_{\circ}$.
\end{lemma}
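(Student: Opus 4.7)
The plan is to derive both assertions by well-chosen applications of the semi-brace identity, using Lemma~\ref{S2isS1S}(1) in the first part and the matrix-semigroup structure of $(B,\cdot)$ in the second.

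For the first statement, I would rewrite $\overline{x}y=\overline{x}\cdot 1_\circ\cdot y$ via Lemma~\ref{S2isS1S}(1), then apply the semi-brace identity with $a=x$, $b=\overline{x}$, $c=1_\circ y$. Since $x\circ\overline{x}=1_\circ$ and $\overline{x}\cdot 1_\circ y=\overline{x}y$, the identity collapses to
$$\lambda_x(y)\;=\;(x\circ\overline{x})\bigl(x\circ(\overline{x}\cdot 1_\circ y)\bigr)\;=\;1_\circ\cdot\lambda_x(y),$$
so $\lambda_x(y)\in 1_\circ B$.

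For the second statement, assume $(B,\cdot)$ is completely simple. By Theorem~\ref{CompSimp} and Corollary~\ref{rowcol} one may identify $(B,\cdot)$ with $\mathcal{M}(G,I,J,\mathcal{I}_{|J|,|I|})$. A direct check shows that every $x=(g,i,j)\in B$ admits a right-identity idempotent $e=(1,i',j)\in E(B)$ (for any $i'\in I$), so $xe=x$; by associativity it follows that $\overline{y}x\cdot e=\overline{y}(xe)=\overline{y}x$ as well. Apply the semi-brace identity with $a=\overline{\overline{y}x}$, $b=x$, $c=e$ to obtain
$$\overline{\overline{y}x}\circ(x\cdot e)\;=\;\bigl(\overline{\overline{y}x}\circ x\bigr)\bigl(\overline{\overline{y}x}\circ(\overline{y}x\cdot e)\bigr).$$
The left-hand side equals $\overline{\overline{y}x}\circ x=\rho_x(y)$ because $xe=x$, while on the right the second factor reduces to $\overline{\overline{y}x}\circ\overline{y}x=1_\circ$. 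Hence $\rho_x(y)=\rho_x(y)\cdot 1_\circ\in B1_\circ$.

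The main obstacle is spotting the right factorisation of $x$ for the second part: a naive split of $x$ through $1_\circ$ leaves an uncontrolled correction term, whereas factoring $x=x\cdot e$ through a right-identity idempotent $e\in E(B)$ — precisely the feature provided by the completely simple hypothesis — forces the correction factor to collapse to $1_\circ$, giving the desired absorption by $1_\circ$ on the right. Everything else is routine manipulation with the semi-brace axiom and Lemma~\ref{S2isS1S}(1).
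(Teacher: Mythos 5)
Your proof is correct and follows essentially the same route as the paper: for $\lambda_x(y)$ the semi-brace identity applied to $x\circ(\overline{x}y)$ (your insertion of $1_\circ$ via Lemma~\ref{S2isS1S}(1) is harmless but unnecessary, since the identity applies directly with $b=\overline{x}$, $c=y$), and for $\rho_x(y)$ the factorisation $x=xe$ through a right-identity idempotent $e=(1,1,j)$, which is exactly the paper's choice $x(1,1,r)$. No gaps.
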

\begin{proof}
	Using the definition, it follows that $$ \lambda_x(y) = x \circ \left( 
	\overline{x}y\right) = \left(x \circ \overline{x}\right) \left( x 
	\circ\left( \overline{ x}y\right)\right) = 1_{\circ}\lambda_x(y).$$
	Let $x=(g,s,r)$ $$ \rho_{x}(y) = \overline{ (\overline{y}x)}\circ 
	(x(1,1,r)) = \left(\overline{(\overline{y}x)} \circ x\right) 
	\overline{(\overline{y}x)} \circ (\overline{y}x(1,1,r)) = \rho_{x}(y)1_{\circ}.$$
\end{proof}


\begin{prop}\label{charactanti}
	Let $(B,\cdot, \circ)$ be a left semi-brace. The following properties are equivalent.
\begin{enumerate}
\item[(1)] $\rho: \left( B, \circ \right) \longrightarrow \textup{Map}(B,B)$ is 
an anti-homomorphism.
\item[(2)] $c \left(a\circ \left(1_{\circ}b\right)\right) = c \left( a \circ b 
\right)$ for all $a,b,c \in B$.
\item[(3)] $(B,\cdot)$ is completely simple and, for any $(g,i,j) \in B$ and 
$(1,k,l) \in E(B)$, if $(h,r,s) = (g,i,j) \circ (1,k,l)$, then 
$h=g$.
\end{enumerate}
Moreover, in these cases, 
the  idempotents $E(B)$  form a left subsemi-brace as well as the idempotents $E(B1_{\circ})$ of the left
subsemi-brace $B1_{\circ}$.
\end{prop}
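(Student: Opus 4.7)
The strategy is to prove $(1) \Leftrightarrow (2)$ by direct algebraic manipulation of the defining identity of $\rho$, then $(2) \Leftrightarrow (3)$ using the completely simple matrix structure from Corollary~\ref{rowcol}, and finally deduce the moreover claim from (3). For $(1) \Leftrightarrow (2)$, I unfold the anti-homomorphism condition $\rho_{a \circ c}(b) = \rho_c(\rho_a(b))$ using $\overline{\rho_a(b)} = \overline{a} \circ (\overline{b} \cdot a)$ to obtain
$$\overline{\overline{b} \cdot (a \circ c)} \circ (a \circ c) = \overline{(\overline{a} \circ (\overline{b} \cdot a)) \cdot c} \circ c.$$
Composing on the right with $\overline{c}$ in $\circ$, inverting, and then composing on the left with $a \circ (\cdot)$, one side collapses (via $a \circ \overline{a} = 1_\circ$) to $\overline{b} \cdot (a \circ c)$, and the other, by the semi-brace identity together with $a \circ (\overline{a} c) = \lambda_a(c)$ and $a \cdot \lambda_a(c) = a \circ (1_\circ c)$ from~(\ref{lambdaproduct}), collapses to $\overline{b} \cdot (a \circ (1_\circ c))$. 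Since $\overline{b}$ ranges over $B$, this is exactly condition (2) after relabeling, and all steps are reversible.

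For $(2) \Leftrightarrow (3)$, I work in the matrix model $B \cong \mathcal{M}(G, I, J, \mathcal{I}_{|J|,|I|})$ of Corollary~\ref{rowcol}. Left multiplication by $c = (\gamma, r, s)$ sends $(x, u, v) \mapsto (\gamma x, r, v)$, so (2) is equivalent to the first and third components of $a \circ (1_\circ b)$ and $a \circ b$ coinciding for every $a, b$. For $(3) \Rightarrow (2)$: write $b = (b 1_\circ) \cdot (1, 1, l)$ and $1_\circ b = (1_\circ b 1_\circ) \cdot (1, 1, l)$, apply the semi-brace identity to pull out a common right factor (handling third components), then further decompose $b 1_\circ = (1, k, 1) \cdot (1_\circ b 1_\circ)$ and invoke (3) on the idempotent $(1, k, 1) \in E(B)$ to secure equality of the first components. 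For $(2) \Rightarrow (3)$: take $b = (1, k, l) \in E(B)$; then $a \circ (1_\circ b) = a \circ (1, 1, l) = a \cdot \lambda_a((1, 1, l))$ by~(\ref{lambdaproduct}), and Lemmas~\ref{lambdaidempot} and~\ref{rhomap1} force $\lambda_a((1, 1, l))$ to lie in $E(1_\circ B)$, hence to have first component $1$. Consequently the first component of $a \circ b$ is that of $a$, which is (3).

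For the moreover statement, observe that in the matrix form $E(B) = \{(1, i, j) : i \in I, j \in J\}$ and $(1, i, j)(1, k, l) = (1, i, l)$, so $E(B)$ is closed under $\cdot$. For $e, f \in E(B)$, condition (3) with the idempotent $f$ forces the first component of $e \circ f$ to be $1$, so $e \circ f \in E(B)$. For $\circ$-inverses, if $\overline{e} = (m, p, q)$ then $\overline{e} \circ e = 1_\circ$, and applying (3) to $\overline{e}$ with the idempotent $e$ forces $m = 1$, yielding $\overline{e} \in E(B)$. The semi-brace identity is inherited, so $E(B)$ is a subsemi-brace. The identical argument inside the subsemi-brace $B 1_\circ$ (which inherits the matrix structure as the column part) handles $E(B 1_\circ)$. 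The main obstacle I anticipate is the unfolding step in $(1) \Leftrightarrow (2)$, where several applications of the semi-brace identity must be interleaved with inversion in $(B, \circ)$ and careful tracking of which products are interpreted in $\cdot$ versus $\circ$.
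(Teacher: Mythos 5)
Your treatment of $(1)\Leftrightarrow(2)$ matches the paper's computation (expanding $\rho_c\rho_a$ and $\rho_{a\circ c}$, inserting $\overline{a}\circ a$, and using the semi-brace identity to collapse one side to $\overline{b}\,(a\circ(1_{\circ}c))$ and the other to $\overline{b}\,(a\circ c)$), and your arguments for $(3)\Rightarrow(2)$ and for the ``moreover'' claim are in the same spirit as the paper's. The problem is in $(2)\Rightarrow(3)$. You announce that you ``work in the matrix model $B\cong\mathcal{M}(G,I,J,\mathcal{I}_{|J|,|I|})$ of Corollary~\ref{rowcol}'', but that corollary applies only to \emph{completely simple} left semi-braces, while the proposition is stated for an arbitrary left semi-brace: the assertion ``$(B,\cdot)$ is completely simple'' is itself part of condition (3) and must be \emph{derived} from (2). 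This is not automatic --- the paper proves complete simplicity only under the extra hypothesis that $1_{\circ}B$ is a subgroup of $(B,\circ)$ (Theorem~\ref{CompSimp}), e.g.\ when $B$ is finite, and explicitly records that it is unknown whether it holds for arbitrary infinite left semi-braces. So your proof of $(2)\Rightarrow(3)$ establishes the group-component statement only after presupposing the structural statement.

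The missing step is the paper's short but genuine argument: applying (2) with $c=1_{\circ}$ and $a=1_{\circ}b$ gives
$1_{\circ}=1_{\circ}\left(1_{\circ}b\circ\overline{1_{\circ}b}\right)=1_{\circ}\left(1_{\circ}b\circ\left(1_{\circ}\overline{1_{\circ}b}\right)\right)=1_{\circ}b\,\lambda_{1_{\circ}b}\!\left(\overline{1_{\circ}b}\right)$,
hence $1_{\circ}\in 1_{\circ}bB$ for every $b$, hence $B=B1_{\circ}B\subseteq BbB$, so $(B,\cdot)$ is simple; every element of $1_{\circ}B1_{\circ}$ then has a right inverse, so $1_{\circ}B1_{\circ}$ is a group, $1_{\circ}$ is a primitive idempotent, and $(B,\cdot)$ is completely simple. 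Once this is inserted, the remainder of your $(2)\Rightarrow(3)$ (taking $b$ idempotent, using (\ref{lambdaproduct}) and Lemma~\ref{lambdaidempot} to place $\lambda_a(b)$ in $E(1_{\circ}B)$, and comparing first components after left multiplication by $1_{\circ}$) goes through and agrees with the paper.
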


\begin{proof}
Let $a, b, x \in B$.
The left semi-brace property yields that 
\begin{eqnarray*}
	\rho_b \rho_a(x) &=& \overline{\left( \overline{\rho_a(x)}b\right)} \circ b \; = \; \overline{\left( \overline{\rho_a(x)}b\right)} \circ \overline{a} \circ a \circ b \\
	 &=& \overline{a \circ \left( \overline{\rho_a(x)}b\right)} \circ (a \circ b) 
	 \; =\;   \overline{a \circ \left( \left( \overline{a} \circ 
	 \left(\overline{x}a\right) \right)b \right)} \circ \left(a \circ b \right) 
	 \\ 
	 &=& \overline{ \overline{x}a \left( a \circ \left( \overline{a} b \right)\right)} \circ \left(a\circ b \right) 
	 \;= \; \overline{ \overline{x} \left(a \circ \left(1_{\circ}b \right)\right)} \circ \left( a \circ b \right) \\  
\end{eqnarray*}
and
\begin{eqnarray*}
	\rho_{a\circ b}(x) &=&  \overline{ \overline{x} \left(  a \circ b \right)} \circ \left( a \circ b \right) 
	\end{eqnarray*}
Hence, $\rho$ is  an anti-homomorphism precisely when $\overline{x} (a\circ (1_{\circ} b)) = \overline{x} (a\circ b)$. That is, when property (2) holds.

Assume (2) holds.
We now prove the last claim. From Lemma~\ref{basics} we know that $1_{\circ}$ is a multiplicative idempotent. Hence, for any $b\in B$, property (2) yields that
 $$1_{\circ} = 1_{\circ} (1_{\circ} b \circ \overline{1_{\circ} b})= 1_{\circ} (1_{\circ} b \circ (1_{\circ} \overline{1_{\circ} b}) = 1_{\circ} b\lambda_{1_{\circ}b}(\overline{1_{\circ}b})$$
and thus $1_{\circ} \in 1_{\circ} bB$. Therefore, by Lemma \ref{basics}, 
 $$B=B1_{\circ} B = B1_{\circ} bB \subseteq BbB.$$
So, $(B,\cdot)$ is a simple semigroup and every element of $1_{\circ}B1_{\circ}$ has a right inverse. Hence $1_{\circ}B1_{\circ}$ is a multiplicative subgroup of
$(B,\cdot)$.
Thus, $(B,\cdot )$ is completely simple  and, from Corollary~\ref{rowcol}, we 
know that $B=\mathcal{M}(G,I,J,\mathcal{I}_{|J|,|I|})$ and  $B1_{\circ} = K =\{ 
(g,i,1) \mid g\in G,\; i\in I\}$ is a left subsemi-brace. 

Put $E=E(B)$. Let $c,b \in E$. 
Because of $(2)$, for any $a \in B$ we have $$ c(a \circ (1_{\circ}b)) = c(a 
\circ b).$$ By the semi-brace property and Lemma~\ref{lambdaidempot}, $$ 
\underbrace{c}_{\in E} a \underbrace{a \circ (\overline{a}b)}_{\in E} = 
\underbrace{c}_{\in E} (a\circ b).$$ It follows  that the group component of $a$ 
is precisely the group component of $a \circ b$. Equivalently, for any 
$(h,k,l), (1,s,r)\in B$, 
         $$(h,k,l)\circ (1,s,r)=(h,x,y).$$
 Since the idempotents are precisely the elements $(1,k,l)$ it then is clear 
 that they form a subsemi-brace of $B$.
 Hence we have shown that (3) holds.

We now prove that (3) implies (2). Hence $B=\mathcal{M}(G, I,J, \mathcal{I}_{|J|,|I|})$.
Let $c, a=(g,i,j), b=(h,k,l) \in B$. Then, by the semi-brace property and Lemma~\ref{lambdaidempot},
 $$ c ( a 
\circ (1_{\circ}(h,k,l))) = c a  \lambda_a((1,k,l)) \lambda_a(h,1,1) \lambda_a(1,1,l)$$
and
 $$ c (a \circ b) = c (a \circ 
(1,k,1)) \lambda_a(h,1,1) \lambda_{a}(1,1,l).$$ 
By assumption, $a\circ (1,k,1) = (g,s,t)$ for some $s\in I$ and $t\in J$. So, it follows that
 $$ c(a \circ b) = c( a \circ (1_{\circ}b)).$$
This proves (2).
 
 It remains to show that under the assumption of (1) that $E(K)$ is a left subsemi-brace. 
 Applying the assumption (3)  for $h=1$ and $l=r=1$, we obtain that the group component of $(1,k,1)\circ (1,s,1)$ is $1$. Hence, since $K$ is a left subsemi-brace, 
$(1,k,1)\circ (1,s,1)  \in K$. 
This shows that the idempotents of $(K,\cdot)$ form a multiplicative 
subsemigroup of $(B,\circ)$.
Also, with  $(h,k,l) =\overline{(1,s,1)}$ and $r=1$ we get that
$(1,1,1)= \overline{(1,s,1)} \circ (1,s,1)$. Thus, if $(h,k,l) = 
\overline{(1,s,1)}$, it follows that $h=1$. Furthermore, $$ \overline{(1,s,1)} 
= \overline{(1,s,1)} \circ (1,1,1) = \overline{(1,s,1)} \left( 
\overline{(1,s,1)} \circ (1,s,1)\right) = \overline{(1,s,1)}1_{\circ}.$$
Hence, 
$\overline{(1,s,1)}\in K$ anf thus  $E(K)$ is a left subsemi-brace 
of $B$.
\end{proof}

Some examples of left semi-braces with $\rho$ an anti-homomorphism:
(i)  finite completely simple left semi-braces $B$ with $|1_{\circ} B 
1_{\circ}|=1$, i.e. $(B,\cdot)$ consists  of idempotents,\\
(ii)  the semigroups  $(B,\cdot) = 
	\mathcal{M}(G,|I|,|J|,\mathcal{I}_{|J|,|I|})$ and with $\circ$ defined by 
	equation (\ref{productcompetelysimple}) and\\
	(iii) left cancellative left semi-brace. Hence we recover a result of  
Catino, Colazzo and Stefanelli  \cite{catino2017semi}. 

It can be seen that for the left semi-brace defined in Example \ref{special} 
$\rho$ is not an anti-homomorphism.

\begin{corollary} \label{GroupIdempotent}
Let $(B,\cdot,\circ)$ be a left semi-brace. If  $\rho$ is an anti-homomorphism 
then for every $b\in B1_{\circ}$ there exist unique $g\in 1_{\circ}B1_{\circ} $ 
and an idempotent  $e\in E(B1_{\circ})$
such that $b=g\circ e$ (and thus $g$ is the group component of $b$). In 
particular $B1_{\circ}=1_{\circ}B1_{\circ}  \circ E(B1_{\circ})$. Moreover, 
since $E(B1_{\circ})$ is a subsemi-brace, $b\circ f$ and $b$ have the same 
group component for any $f \in E(B1_{\circ})$.
\end{corollary}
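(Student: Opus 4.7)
The plan is to use the explicit matrix-semigroup structure $(B,\cdot) \cong \mathcal{M}(G, I, J, \mathcal{I}_{|J|,|I|})$ given by Corollary \ref{rowcol} (which applies since Proposition \ref{charactanti} forces $(B,\cdot)$ to be completely simple), with $G = 1_{\circ}B1_{\circ}$. Under this identification $B1_{\circ} = \{(g, i, 1) : g \in G,\; i \in I\}$ and $E(B1_{\circ}) = \{(1, k, 1) : k \in I\}$, and by Lemma \ref{basics} together with the final statement of Proposition \ref{charactanti}, both are sub-semi-braces of $B$; in particular $(B1_{\circ}, \circ)$ and $(E(B1_{\circ}), \circ)$ are subgroups of $(B, \circ)$.

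For existence, given $b \in B1_{\circ}$ write $b = (g_1, i, 1)$ and let $g := (g_1, 1, 1) \in 1_{\circ}B1_{\circ}$ denote its group component. A direct computation in $\mathcal{M}(G, I, J, \mathcal{I}_{|J|,|I|})$ gives $1_{\circ} b = g$. The candidate I take is $e := \overline{g} \circ b$, which already lies in $B1_{\circ}$ because $\overline{g} \in G \subseteq B1_{\circ}$ and $(B1_{\circ}, \circ)$ is a subgroup. The crux is to show that the group component of $e$ is trivial. For this I would apply Proposition \ref{charactanti}(2) with $c = 1_{\circ}$ and $a = \overline{g}$:
$$1_{\circ}\bigl(\overline{g} \circ b\bigr) \;=\; 1_{\circ}\bigl(\overline{g} \circ (1_{\circ} b)\bigr) \;=\; 1_{\circ}\bigl(\overline{g} \circ g\bigr) \;=\; 1_{\circ} \cdot 1_{\circ} \;=\; 1_{\circ}.$$
Writing $e = (h, r, s)$, the identity $1_{\circ} e = (h, 1, s) = (1,1,1)$ forces $h = 1$, while $s = 1$ is automatic from $e \in B1_{\circ}$. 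Hence $e = (1, r, 1) \in E(B1_{\circ})$ and $b = g \circ e$.

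Uniqueness follows from Proposition \ref{charactanti}(3): if $g_1 \circ e_1 = g_2 \circ e_2$ with $g_i \in 1_{\circ}B1_{\circ}$ and $e_i \in E(B1_{\circ})$, then the group component of $g_i \circ e_i$ equals the group component of $g_i$, which is $g_i$ itself, so $g_1 = g_2$, and cancellation in $(B, \circ)$ gives $e_1 = e_2$. The final assertion that $b$ and $b \circ f$ share the same group component for any $f \in E(B1_{\circ})$ is a direct reapplication of Proposition \ref{charactanti}(3) with $a = b$, noting that $b \circ f \in B1_{\circ}$ since $B1_{\circ}$ is a sub-semi-brace, so its group component is well defined.

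I anticipate no real obstacle beyond the slightly unobvious choice of candidate $e = \overline{g} \circ b$; once made, the computation is immediate from Proposition \ref{charactanti}(2), which is precisely tailored to replacing $1_{\circ} b$ by $b$ inside a circle product.
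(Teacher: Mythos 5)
Your proof is correct, and it takes a genuinely different (and in fact more direct) route to existence than the paper does. The paper factors $\overline{b}=fh$ multiplicatively with $f\in E(B)$ and $h\in G$, then uses the semi-brace identity to compute $b\circ f=((kg)\circ(fh))\,\lambda_{kg}(h^{-1})=1_{\circ}\lambda_{kg}(h^{-1})$, concludes that $b\circ f$ lies in $G$ with group component $g$, and so sets $e=\overline{f}$. You instead name the only possible candidate $e=\overline{g}\circ b$ outright and verify it lands in $E(B1_{\circ})$ by a one-line application of condition (2) of Proposition \ref{charactanti} with $c=1_{\circ}$, using $1_{\circ}b=g$ in the Rees matrix form. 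Both arguments ultimately rest on the same two ingredients (the identification $B=\mathcal{M}(G,I,J,\mathcal{I}_{|J|,|I|})$ and Proposition \ref{charactanti}), but yours avoids the auxiliary factorization of $\overline{b}$ and the $\lambda$-computation entirely, and it has the added merit of making the uniqueness argument explicit — the paper leaves uniqueness implicit in the parenthetical remark that $g$ must be the group component of $b$. All the small supporting facts you invoke check out: $\overline{g}\circ b\in B1_{\circ}$ because $G\subseteq B1_{\circ}$ and $(B1_{\circ},\circ)$ is a subgroup by Lemma \ref{basics}; $1_{\circ}e=(h,1,s)=(1,1,1)$ does force $h=1$; and the final assertion about $b\circ f$ is indeed just Proposition \ref{charactanti}(3) applied to $f\in E(B1_{\circ})\subseteq E(B)$.
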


\begin{proof}
Assume that $\rho$ is an anti-homomorphism. Thus, by Proposition~\ref{charactanti}, $B=\mathcal{M}(G,I,J, \mathcal{I}_{|J|,|I|})$ with $G=1_{\circ} B 1_{\circ}$.
Let $b\in B1_{\circ}$. Then, $b=kg$ for some $g\in G$ and $k\in E(B1_{\circ})$. Because $B1_{\circ}$ is a left subsemi-brace, by Corollary~\ref{rowcol}, we may write $\overline{b}= fh$ with $f\in E(B)$ and $h\in G$. 
Let $h^{-1}$ denote the inverse of $h$ in $(G,\cdot )$. Then,
\begin{eqnarray*}
b \circ f &=& (kg) \circ (fhh^{-1}) = ((kg) \circ (fh)) \lambda_{kg}(h^{-1} )= 1_{\circ}   \lambda_{kg}(h^{-1} ).
\end{eqnarray*}
Because  $ \lambda_{kg}(h^{-1} )\in B1_{\circ}$ we get that $b \circ f \in G$. Furthermore, by Proposition~\ref{charactanti}, $b\circ f$ has group component equal to $g$.
Hence, $b\circ f=g$. So, $b=g\circ \overline{f}$. As $f\in E(B1_{\circ})$, the result follows.
\end{proof}

\begin{lemma}\label{behaviorlambda}
	Let $(B,\cdot,\circ)$ be a completely simple left semi-brace. Then, $\lambda_e(x) = e \circ x$, for any 
	$x \in 1_{\circ}B$ and $e \in E(1_{\circ} B)$.
\end{lemma}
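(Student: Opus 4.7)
The plan is to rewrite $e\circ x$ using the semi-brace identity with the decomposition $x = 1_{\circ}\cdot x$, which converts the claim into showing that $e$ acts as a left identity on $1_{\circ}B$ under $\cdot$.

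First, I would note that $1_{\circ}\cdot x = x$ for every $x\in 1_{\circ}B$. Indeed, writing $x = 1_{\circ}y$ for some $y\in B$ and using that $1_{\circ}$ is a multiplicative idempotent (Lemma~\ref{basics}), one gets $1_{\circ}\cdot x = 1_{\circ}\cdot 1_{\circ}\cdot y = 1_{\circ}y = x$. Next I would show the key fact that $e\cdot 1_{\circ} = 1_{\circ}$ for every $e\in E(1_{\circ}B)$. By Theorem~\ref{CompSimp} (or Lemma~\ref{1s1subgroup}), $G = 1_{\circ}B1_{\circ}$ is a subgroup of $(B,\cdot)$ whose identity must be the idempotent $1_{\circ}$. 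Now $e\cdot 1_{\circ} \in 1_{\circ}B 1_{\circ} = G$ (since $e\in 1_{\circ}B$), and using $1_{\circ}e = e$ from the previous step together with $e^{2}=e$, one computes
\[
(e\cdot 1_{\circ})(e\cdot 1_{\circ}) = e\cdot(1_{\circ}\cdot e)\cdot 1_{\circ} = e\cdot e\cdot 1_{\circ} = e\cdot 1_{\circ}.
\]
Since $G$ is a group, its only idempotent is its identity, so $e\cdot 1_{\circ} = 1_{\circ}$.

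Combining these, for any $y \in 1_{\circ}B$, writing $y = 1_{\circ}z$ yields $e\cdot y = e\cdot 1_{\circ}\cdot z = 1_{\circ}\cdot z = y$, so $e$ is a left identity on all of $1_{\circ}B$ under $\cdot$. With this in hand, the conclusion is quick: applying the semi-brace identity gives
\[
e\circ x \;=\; e\circ (1_{\circ}\cdot x) \;=\; (e\circ 1_{\circ})\cdot \bigl(e\circ (\overline{e} x)\bigr) \;=\; e\cdot \lambda_{e}(x),
\]
and Lemma~\ref{rhomap1} tells us $\lambda_{e}(x)\in 1_{\circ}B$, so the left-identity property forces $e\cdot \lambda_{e}(x) = \lambda_{e}(x)$, giving $e\circ x = \lambda_{e}(x)$ as desired.

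The main obstacle is the middle step, namely proving $e\cdot 1_{\circ} = 1_{\circ}$ for $e\in E(1_{\circ}B)$; once this is available, the rest is a direct application of the defining identity. The point is that although $1_{\circ}B$ and $1_{\circ}B1_{\circ}$ are only known abstractly as a $\circ$-subgroup and a $\cdot$-subgroup respectively, the interplay between these two structures (via $1_{\circ}e = e$) forces $e\cdot 1_{\circ}$ into the group $G$ as an idempotent and hence to collapse to $1_{\circ}$. This argument uses only the general structural results of the section and avoids invoking the Rees matrix coordinates of Corollary~\ref{rowcol} explicitly.
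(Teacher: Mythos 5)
Your proof is correct. It takes a genuinely different route from the paper's, whose entire argument is the one-liner $\lambda_e(x)=e\circ(\overline{e}x)=e\circ x$; that argument rests on the unstated fact that $\overline{e}x=x$, which in turn needs $\overline{e}$ to again lie in $E(1_{\circ}B)$ (equivalently, to have the form $(1,1,m)$ in the Rees coordinates of Corollary~\ref{rowcol}), so that it acts as a left identity on the first row. You avoid saying anything about $\overline{e}$: you first establish that every $e\in E(1_{\circ}B)$ is a left identity on $1_{\circ}B$ under $\cdot$ (by forcing $e1_{\circ}$ to be an idempotent of the group $1_{\circ}B1_{\circ}$, hence equal to $1_{\circ}$), then use the semi-brace identity to rewrite $e\circ x=e\cdot\lambda_e(x)$ and conclude via Lemma~\ref{rhomap1}. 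What your approach buys is that it fills the gap the paper glosses over and stays coordinate-free; what the paper's approach buys is brevity, at the cost of implicitly invoking the structure of $E(1_{\circ}B)$ as a $\circ$-subgroup. One minor citation point: Theorem~\ref{CompSimp} and Lemma~\ref{1s1subgroup} carry the hypothesis that $1_{\circ}B$ is a $\circ$-subgroup, which for a completely simple left semi-brace is only established in Corollary~\ref{rowcol}; but the fact you actually need, that $(1_{\circ}B1_{\circ},\cdot)$ is a group with identity $1_{\circ}$, is Corollary~\ref{rowcol}(3) (and also follows from the general fact that $fSf$ is a group for any idempotent $f$ of a completely simple semigroup $S$), so this does not affect the validity of your argument.
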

\begin{proof}
	Let $e \in E(1_{\circ}B)$ and $x \in 1_{\circ}B$. Then, $\lambda_e(x) = e \circ ( ex) = e 
	\circ x$.
\end{proof}

\begin{definition} \cite{guarnieri2017skew}
	Let $(B,\cdot)$ and $(B,\circ)$ be groups. The inverse of $a$ in $(B,\cdot )$ is denoted by $a^{-1}$. If, for all $a,b,c$ one has 
	$$ a \circ (bc) = \left( a \circ b\right) a^{-1}\left( a \circ 
	c\right),$$
	then $(B,\cdot,\circ)$ is called a skew left brace. 
\end{definition}

\begin{prop} Let $B$ be a set and suppose $(B,\cdot )$ and $(B,\circ )$ are groups. Then $(B,\cdot,\circ)$ is
 a left semi-brace if and only if  $(B,\cdot,\circ)$ is a skew left brace.
\end{prop}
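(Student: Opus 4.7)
The plan is to reduce both identities to the same single algebraic relation, after first establishing that the two group identities coincide. By Lemma~\ref{basics}(2), $1_{\circ}$ is an idempotent of the semigroup $(B,\cdot)$. Since $(B,\cdot)$ is a group by assumption, the only idempotent is its identity, and hence $1_{\circ}$ coincides with the multiplicative identity of $(B,\cdot)$; write $1$ for this common element. In particular $\overline{a}$ is well defined in $(B,\circ)$ and $a^{-1}$ in $(B,\cdot)$, and $a\circ\overline{a}=1=aa^{-1}$.

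For the forward implication, assume the semi-brace identity
\[ a\circ(bc) \;=\; (a\circ b)\bigl(a\circ(\overline{a}c)\bigr). \]
Specialising $b=1$ yields $a\circ c=a\,\bigl(a\circ(\overline{a}c)\bigr)$, so, using that $a$ is invertible in $(B,\cdot)$,
\[ a\circ(\overline{a}c) \;=\; a^{-1}(a\circ c). \]
Substituting this back into the right hand side of the semi-brace identity gives
\[ a\circ(bc) \;=\; (a\circ b)\,a^{-1}\,(a\circ c), \]
which is exactly the skew left brace axiom.

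For the converse, assume the skew left brace identity. Applying it with the second argument replaced by $\overline{a}\cdot c$ and using $a\circ\overline{a}=1$,
\[ a\circ(\overline{a}c) \;=\; (a\circ\overline{a})\,a^{-1}(a\circ c) \;=\; a^{-1}(a\circ c). \]
Then the right hand side of the semi-brace identity becomes
\[ (a\circ b)\bigl(a\circ(\overline{a}c)\bigr) \;=\; (a\circ b)\,a^{-1}(a\circ c) \;=\; a\circ(bc), \]
again by the skew left brace axiom, so the semi-brace identity holds.

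The only genuine observation is the identification $1_{\circ}=1$, which is immediate from Lemma~\ref{basics}; after that, both directions reduce to the single substitution $b=1$ (or $b=\overline{a}$ read backwards), so no real obstacle arises.
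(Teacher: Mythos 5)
Your proof is correct and follows essentially the same strategy as the paper: identify $1_{\circ}$ with the multiplicative identity and then show $a\circ(\overline{a}c)=a^{-1}(a\circ c)$, so that the two defining identities become the same relation (the paper reaches this via $\lambda_{\overline{a}}\lambda_a=\mathrm{id}$ from Lemma~\ref{lambdaidempot}, whereas you get it more directly by setting $b=1$ in the semi-brace identity). You also write out the converse, which the paper leaves to the reader; both directions check out.
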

\begin{proof}
Assume $(B,\cdot,\circ)$ is a left semi-brace.
	Let $0 \in B$ denote the identity element in $(B,\cdot)$. Then, $$0=1_{\circ} \circ 
	0 = 1_{\circ} \circ \left( 00 \right) = (1_{\circ}\circ 0) (1_{\circ}\circ (1_{\circ}0)) = 0 1_{\circ}0 = 1_{\circ}.$$
	Hence, $1_{\circ} = 0$. For any $x \in B$, it follows that $$ \lambda_{1_{\circ}}(x) = 1_{\circ}
	\circ 1_{\circ}x = 1_{\circ}x = x.$$
	Because of Lemma~\ref{lambdaidempot} it this follows, for any $a,b \in B$,  
	that
	 $$
	b = \lambda_{1_{\circ}}(b) = \lambda_{\overline{a}} \lambda_a (b) = \overline{a} 
	\circ 
	a \lambda_a(b).$$
	Hence, using that both $(B,\cdot)$ and $(B,\circ)$ are groups, it follows 
	that $$ a^{-1} ( a \circ b) = \lambda_{a}(b).$$
	This shows that $(B,\cdot,\circ)$ is a skew left brace.
	
The converse is left to the reader.
\end{proof}

\section{Decomposition into matched products}

{\rm In this section we will show that many left semi-braces can be decomposed as a 
matched product as defined by Catino, Colazzo and Stefanelli 
\cite{catino2017semi} (which on its turn is an extension of the definition of 
matched products of left braces as defined in 
\cite{bachiller2015extensions}).  
The definition 
given in \cite{catino2017semi} is under the  
assumption 
that the left semi-braces involved are left cancellative. However, this is not  needed to show that the proposed operations indeed define a left semi-brace.
}

\begin{definition}\label{Matchedproductsemibrace}
	Let $(B,\cdot, \circ)$ and $(S,\cdot,\circ)$ be left semi-braces. Let 
	$\delta: B \longrightarrow \textup{Aut}(S,\cdot)$ be a right action of the 
	group 
	$(B,\circ)$ on the set $S$ and $\sigma: S \longrightarrow \textup{Sym}(B)$ 
	 a left action of the group $(S, \circ)$ on the set $B$.  
	Assume the 
	following properties hold for any $x,y \in S$ and $a,b \in B$,
	\begin{enumerate}
		\item $\sigma_x(a \circ b) = \sigma_x(a) \circ \sigma_{ \delta_a(x)}(b),$
		\item $\sigma_x(1_{\circ}) = 1_{\circ},$
		\item $\delta_a(x \circ y) = \delta_{\sigma_{y}(a)}(x) \circ \delta_a(y),$
		\item $\delta_a(1_{\circ}) = 1_{\circ},$
		\item $ \overline{ \delta_a ( \overline{ xy}) } = \overline{ \delta_a( \overline{x})} \quad \overline{ \delta_a( \overline{y})}$.
	\end{enumerate}
Then the following operations define a left semi-brace structure on    on $B 
\times S$  
\begin{align*} (a,x) (b,y) &= (ab, xy), \\ ( a,x) \circ (b,y) &= \left( a \circ 
\sigma_{ \overline{ \delta_{a}( \overline{x})}}(b), x \circ \overline{ 
\delta_{\overline{ \sigma_{ \overline{x}}(a)}}(\overline{y})} \right) .
	\end{align*}
This is called the matched product of the left semi-brace $B$ and $S$ by $\delta$ and $\sigma$, denoted $B \bowtie S$. 
\end{definition}

 That $B \bowtie S$ actually is a left semi-brace can be proven exactly in 
the same way as in the thesis of Colazzo \cite[Theorem 3.1.1]{Colazzothesis}.
Furthermore, it is shown that if, moreover, both $B$ and $S$ are left 
cancellative then so is the matched product.

We are now in a position to prove the main result of this section, generalizing Theorem~\ref{Colazzochar}.

\begin{theorem}\label{characttheorem}
	Let $(B,\cdot,\circ)$ be a completely simple left semi-brace such that 
	$\rho$ is anti-homomorphism. Then 
	\begin{enumerate}
	\item $R=E(1_{\circ}B)$ is a right zero left
	semi-brace $(R, \cdot, \circ)$, 
	\item $K=B1_{\circ}$ is a right cancellative left 	semi-brace $(K, \cdot, \circ)$,
	\item  $(B, \cdot, \circ) \cong (K, \cdot,  \circ) \bowtie (R, \cdot, \circ)$. 
	\end{enumerate}
Furthermore, as multiplicative semigroups 
 $$B = \mathcal{M}(1_{\circ}B1_{\circ}, |E(K)|, 	|E(R)|, \mathcal{I}_{|E(R)|,|E(K)| }).$$
\end{theorem}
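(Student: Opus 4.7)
The plan is to first establish (1) and (2) by combining the subsemi-brace structures from Corollary~\ref{rowcol} and Proposition~\ref{charactanti}, and then to construct the matched product decomposition in (3) via the natural factorization $B = K \cdot R$ coming from the matrix representation.

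For (1), I would observe that $R = E(1_{\circ}B) = E(B) \cap 1_{\circ}B$. Under the hypothesis that $\rho$ is an anti-homomorphism, Proposition~\ref{charactanti} shows $E(B)$ is a left subsemi-brace of $B$, while Corollary~\ref{rowcol} shows $1_{\circ}B$ is a left subsemi-brace of $B$; hence their intersection $R$ is also a left subsemi-brace. Using the matrix description $B \cong \mathcal{M}(G, I, J, \mathcal{I}_{|J|, |I|})$ from Corollary~\ref{rowcol}, the set $R$ is identified with $\{(1, 1, j) : j \in J\}$, and the calculation $(1, 1, j)(1, 1, j') = (1, 1, j')$ shows that $(R, \cdot)$ is a right zero semigroup. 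Part (2) is stated explicitly in Corollary~\ref{rowcol}.

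For (3), I would define $\Phi \colon K \times R \to B$ by $\Phi(a, x) = a \cdot x$. The identity $(g, i, 1)(1, 1, j) = (g, i, j)$ in the matrix representation shows $\Phi$ is a bijection, and a direct computation in $\mathcal{M}(G, I, J, \mathcal{I}_{|J|, |I|})$ verifies $\Phi(a, x) \cdot \Phi(b, y) = \Phi(ab, xy)$, establishing multiplicative compatibility. For the circle operation, the semi-brace identity yields, for $a \in K$ and $x \in R$,
\[
a \circ x \;=\; a \circ (1_{\circ} \cdot x) \;=\; (a \circ 1_{\circ})\bigl(a \circ (\overline{a}\,x)\bigr) \;=\; a \cdot \lambda_a(x),
\]
with $\lambda_a(x) \in E(1_{\circ}B) = R$ by Lemmas~\ref{lambdaidempot} and~\ref{rhomap1}; this yields the required action $\delta \colon K \to \mathrm{Aut}(R, \cdot)$ after the reorientation by inverses required by Definition~\ref{Matchedproductsemibrace} (bijectivity of $\lambda_a$ on $R$ follows from $\lambda_a \lambda_{\overline{a}} = \lambda_{1_{\circ}} = \mathrm{id}$). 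I would similarly construct $\sigma \colon R \to \mathrm{Sym}(K)$ from the decomposition of $x \circ b$ into its $K$-part (extracted via right multiplication by $1_{\circ}$) and $R$-part. The five matched product axioms then reduce to identities that follow from the semi-brace identity in $B$ together with Proposition~\ref{charactanti}(3), which guarantees that composing with an idempotent preserves the group component.

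The final matrix assertion reduces to a counting argument: $E(K)$ consists of the elements $(1, i, 1)$, so $|E(K)| = |I|$, and $|E(R)| = |R| = |J|$ since every element of $R$ is idempotent; hence $\mathcal{M}(G, |E(K)|, |E(R)|, \mathcal{I}_{|E(R)|, |E(K)|})$ coincides with the representation from Corollary~\ref{rowcol}. I expect the hard part to be the precise construction of $\sigma$ and the verification of the five matched product axioms. In particular, one must show that for $x \in R$ and $b \in K$, the $R$-part of $x \circ b$ in the factorization $B = K \cdot R$ is precisely $x$; this uses the anti-homomorphism property of $\rho$ essentially, for example via Proposition~\ref{charactanti}(2), which controls how left multiplication interacts with $\circ$ modulo insertion of $1_{\circ}$.
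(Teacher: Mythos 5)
Your plan matches the paper's proof: the paper takes exactly your $\lambda$- and $\rho$-based maps after conjugating by $\circ$-inverses, namely $\delta_{(g,s,1)}\left((1,1,l)\right) = \overline{\lambda_{\overline{(g,s,1)}}\left(\overline{(1,1,l)}\right)}$ and $\sigma_{(1,1,l)}\left((g,s,1)\right) = \overline{\rho_{\overline{(1,1,l)}}\left(\overline{(g,s,1)}\right)}$, and uses the same isomorphism $\left((g,s,1),(1,1,r)\right) \mapsto (g,s,r)$. The verifications you defer (the five matched-product axioms and the $\circ$-compatibility of $\Phi$) are precisely the long computations the paper carries out from the semi-brace identity together with Proposition~\ref{charactanti} and Corollary~\ref{GroupIdempotent}, so no idea is missing.
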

\begin{proof}

Define for $(g,s,1) \in K$ the map $ \delta_{(g,s,1)}: R \longrightarrow R$ 
with $\delta_{(g,s,1)} \left( (1,1,l) \right) = \overline{ \lambda_{ \overline{ 
(g,s,1)}} \left( \overline{ (1,1,l)} \right)}$.

First, we show that $\delta_{(1,1,1)} = \textup{id}_R$. Let $(1,1,l) \in R$, 
then \begin{align*} \delta_{(1,1,1)} \left( (1,1,l) \right) &= \overline{ 
\lambda_{(1,1,1)} \left( \overline{ (1,1,l)} \right)} = \overline{ 
\left(\overline{ (1,1,1)}\overline{ (1,1,l)} \right)} = (1,1,l).
\end{align*}
Thus, $\delta_{(1,1,1)} = \textup{id}_R$.

Second, we show that $\delta$ induces an action $\delta: K \longrightarrow 
\textup{Sym}(R)$. To so, let $(g,s,1),(h,t,1) \in K$ and $(1,1,l) \in R$. Then,
\begin{eqnarray*}
\delta_{(g,s,1)} \circ \delta_{(h,t,1)} \left( (1,1,k) \right) &= &
\delta_{(g,s,1)} \left( \overline{ \lambda_{ \overline{ (h,t,1)}} \left( 
\overline{ (1,1,l) } \right)} \right) \\
&=& \overline{ \lambda_{\overline{(g,s,1)}} \left( \lambda_{ \overline{(h,t,1)}} 
\left( \overline{ (1,1,k) }\right) \right) }\\
&=&\overline{ \lambda_{ \overline{ (h,t,1) \circ (g,s,1) }} \left( \overline{ 
(1,1,l) }\right)}\\
&=& \delta_{(h,t,1) \circ (g,s,1)} (1,1,l).
\end{eqnarray*}

This shows that $\delta: K \longrightarrow \textup{Sym}(R)$, with, for any $h 
\in K$, $\delta(h) = \delta_h$, is a well-defined right action. That $\delta_h$ 
is a bijection follows from the fact that $\delta$ is a homomorphism and that 
$\delta_1 = \textup{id}_R$.

Define for any $(1,1,l) \in R$ and $(g,s,1) \in K$ the map $\sigma_{(1,1,l)}: K 
\longrightarrow K$ with $\sigma_{(1,1,l)}\left( (g,s,1) \right) = \overline{ 
\rho_{\overline{ (1,1,l)}} \left( \overline{ (g,s,1) } \right)}$.
First, we show that $\sigma_{(1,1,1)} = \textup{id}_K$. Let $(g,s,1) \in K$. 
Then,

\begin{align*}
\sigma_{ (1,1,1) } \left( (g,s,1) \right) &= \overline{ \rho_{(1,1,1)} \left( 
\overline{ (g,s,1)} \right)}
=(1,1,1) \circ \left( (g,s,1) (1,1,1) \right)
= (g,s,1).
\end{align*}
Hence, $\delta_{(1,1,1)} = \textup{id}_K$.

Second, we show that $\sigma$ induces an action $\sigma: R \longrightarrow 
\textup{Sym}(K)$. To prove this, let $(1,1,1);(1,1,k) \in R$ and $(g,s,1) \in K$. Then,
\begin{eqnarray*}
\sigma_{(1,1,l)} \circ \sigma_{(1,1,k)} \left( ( g,s,1) \right) &=&
 \sigma_{  (1,1,l)} \left( \overline{ \rho_{ \overline{ (1,1,k)}} \left( \overline{ 
(g,s,1) } \right) } \right)\\
&=& \overline{ \rho_{ \overline{ (1,1,l)}} \circ \rho_{ \overline{ (1,1,k)}} 
\left( \overline{ (g,s,1) } \right)}\\
&=& \overline{ \rho_{ \overline{(1,1,k) \circ (1,1,l)}}\left( 
\overline{(g,s,1)}\right)}\\
&=& \sigma_{(1,1,k) \circ (1,1,l)}\left(g,s,1\right).
\end{eqnarray*}
Thus, $\sigma: R \longrightarrow \textup{Sym}(K)$, with, for any $(1,1,l) \in 
R$, $\sigma \left( (1,1,r) \right) = \sigma_{ (1,1,r)}$, is a well-defined left 
action. Note that $\sigma_{(1,1,r)}$ is a bijection, as $\sigma$ is a 
homomorphism and $\sigma_{(1,1,1)} = \textup{id}_K$.

Let $(1,1,r) \in R$ and $(g,s,1); (h,t,1) \in K$. Then, 
\begin{align*}
&\sigma_{(1,1,r)} \left( ( g,s,1) (h,t,1) \right)= \overline{ 
\rho_{\overline{(1,1,r)}} \left( \overline{ ( gh,s,1) } \right)}\\& =(1,1,r) 
\circ \left( (gh,s,1) \overline{ (1,1,r)} \right) \\ &= (1,1,r) \circ \left( 
(g,s,1) \overline{(1,1,r)} (h,t,1) \overline{(1,1,r)} \right)\\ &= 
\left((1,1,r) \circ \left( (g,s,1) \overline{ (1,1,r) } \right) \right) \left( 
(1,1,r) \circ \left( \overline{ (1,1,r)} (h,t,1)\overline{ 
(1,1,r)}\right)\right).
\end{align*}
Using Proposition \ref{charactanti}, this is equal to

\begin{eqnarray*}
\lefteqn{\left( (1,1,r) \circ \left( (g,s,1) \overline{ (1,1,r) } \right) \right) 
\left( (1,1,r) \circ \left( (h,t,1) \overline{ (1,1,r)} \right)\right)}\\ 
&=&\overline{\rho_{\overline{(1,1,r)}} \left( \overline{ (g,s,1)}\right)} \quad 
\overline{ \rho_{ \overline{ (1,1,r)}} \left( \overline{ (h,t,1)} \right)} 
= \sigma_{(1,1,r)}\left( (g,s,1) \right) \sigma_{ (1,1,r)} \left( (h,t,1) 
\right).
\end{eqnarray*}
Thus, $\sigma_{(1,1,r)}$ is an automorphism of $(K,\cdot)$ for any $(1,1,r) \in 
R$.

Next we  show that both conditions $2$ and $4$ of Definition 
\ref{Matchedproductsemibrace} are satisfied. Let $(1,1,r) \in R$. Then, 

\begin{eqnarray*}
\sigma_{(1,1,r)}\left( (1,1,1) 
\right) &=& \overline{\rho_{ \overline{ (1,1,r) } } \left(\overline{ (1,1,1) } 
\right) } =\overline{  \overline{ \left( (1,1,1) \overline{ (1,1,r)} \right)} 
\circ \overline{ (1,1,r)}}\\ &=& (1,1,r) \circ \overline{ (1,1,r) }= (1,1,1).
\end{eqnarray*}

Let $(g,s,1) \in K$. Then, 
\begin{eqnarray*}
\delta_{(g,s,1)}\left( (1,1,1) 
\right) &=& \overline{ \lambda_{ \overline{ (g,s,1)}} \left( \overline{ (1,1,1)} 
\right)} = \overline{ \overline{ (g,s,1) } \circ \left( (g,s,1) (1,1,1) 
\right) }\\ &=& \overline{ \overline{ (g,s,1)} \circ (g,s,1) }= (1,1,1).
\end{eqnarray*}

Moreover, let $(1,1,r); (1,1,t) \in R$ and $ (g,s,1) \in K$, then we need to 
show that 
\begin{align*}
&\delta_{\sigma_{(1,1,t)} \left( (g,s,1) \right)}\left( (1,1,r) \right) \circ 
\delta_{(g,s,1)} \left( (1,1,t) \right) = \delta_{(g,s,1)}\left( (1,1,r) \circ 
(1,1,t) \right).
\end{align*}
Rewriting the left hand side
\begin{eqnarray*}
\lefteqn{\overline{\left( (1,1,t) \circ \left( (g,s,1) \overline{ (1,1,t)} \right) 
\overline{ (1,1,r)} \right)} \circ (1,1,t) }\\
\lefteqn{
\circ \left( (g,s,1) \overline{ (1,1,t)} \right) \circ \overline{ \left( 
(g,s,1) \overline{ (1,1,t)} \right)} \circ (g,s,1)}\\
&=&\overline{ \left( (1,1,t) \circ \left( (g,s,1) \overline{ (1,1,t)} \right) 
\overline{(1,1,r)} \right)} \circ (1,1,t) \circ (g,s,1)\\
&=&\overline{\left( \overline{ (g,s,1)} \circ \overline{ (1,1,t)} \circ \left( 
(1,1,t) \circ \left( (g,s,1) \overline{ (1,1,t)} \right) \overline{ 
(1,1,r)}\right)\right)}\\
&=&\overline{ \left( \overline{ (g,s,1)} \circ \left( (g,s,1) \overline{ 
(1,1,t)} \left( \overline{ (1,1,t)} \circ \left( \overline{ (1,1,t)} \overline{ 
(1,1,r)}\right)\right)\right)\right)}\\
&=&\overline{ \left( \overline{(g,s,1)} \circ \left( (g,s,1) \left( \overline{ 
(1,1,t)} \circ \overline{ (1,1,r)} \right) \right)\right)}\\
&= &\overline{\lambda_{\overline{(g,s,1)}}\left( \overline{ \left( (1,1,r) \circ 
(1,1,t) \right) } \right)}\\
&=& \delta_{(g,s,1)} \left( (1,1,r) \circ (1,1,t) \right).
\end{eqnarray*}
Analogously, we prove that for any $(g,s,1),(h,t,1) \in K$ and $(1,1,r) \in R$ 
it holds that $$ \sigma_{(1,1,r)}\left( (g,s,1) \right) \circ \left( 
\sigma_{\delta_{(g,s,1)}(1,1,r)}\left( (h,t,1) \right)\right) = 
\sigma_{(1,1,r)} \left( (g,s,1) \circ (h,t,1) \right). $$
Rewriting the left hand side, we get
\begin{eqnarray*}
\lefteqn{(1,1,r) \circ \left( (g,s,1) \overline{ (1,1,r)} \right)\circ \overline{ 
\left( (g,s,1) \overline{ (1,1,r)} \right)} \circ (g,s,1)}\\
\lefteqn{
\circ \left( (h,t,1) 
\left( \overline{ (g,s,1)} \circ \left( (g,s,1) \overline{ (1,1,r)} 
\right)\right)\right)}\\
&=&(1,1,r) \circ (g,s,1) \circ \overline{ (g,s,1) } \circ \left( (g,s,1) \circ 
(h,t,1) \overline{ (1,1,r)} \right)\\
&= &\overline{ \left( \overline{ \left( \left( (g,s,1) \circ (h,t,1) \right) 
\overline{ (1,1,r)} \right)}\right)}\\
&=&\overline{\rho_{\overline{(1,1,r)}}\left( \overline{ (g,s,1) \circ (h,t,1)} 
\right)}\\
&=&\sigma_{(1,1,r)}\left( (g,s,1) \circ (h,t,1) \right).
\end{eqnarray*}
This proves condition $1$ of Definition \ref{Matchedproductsemibrace}.

Finally, let $(1,1,r); (1,1,t) \in R$ and $(g,s,1) \in K$. Then,
\begin{align*}
\overline{ \delta_{(g,s,1)}\left( \overline{(1,1,r) (1,1,t)} \right)} &= 
\lambda_{\overline{(g,s,1)}} \left( (1,1,r) (1,1,t) \right) \\
&= \lambda_{\overline{(g,s,1)}} \left( (1,1,r) \right) 
\lambda_{\overline{(g,s,1)}}\left( (1,1,t) \right)\\
&= \overline{ \delta_{(g,s,1)} \left( \overline{ (1,1,r)} \right) } \quad 
\overline{ 
\delta_{(g,s,1)}\left( \overline{ (1,1,t)} \right)}.
\end{align*}
This shows that condition $5$ from Definition \ref{Matchedproductsemibrace} 
holds. 

We now show that the map $\varphi: K \times R \longrightarrow B$, defined by $$ 
\varphi\left( (g,s,1), (1,1,r) \right) = (g,s,r),$$
is an isomorphism of left semi-braces. First, we check that $\varphi$ is a 
semi-brace homomorphism. 
Let $\left( (g,s,1) , (1,1,r) \right), \left( (h,t,1), 
(1,1,m) \right) \in K \times R$. 
Then,
\begin{eqnarray*}
\lefteqn{\varphi\left( (g,s,1),(1,1,r) \right) \varphi \left( (h,t,1), (1,1,m) 
\right)}\\
&=& (g,s,r) (h,t,m) = (gh,s,m)= \varphi\left( (g,s,1) (h,t,1), (1,1,r) 
(1,1,m)\right)\\&=&\varphi\left( \left( (g,s,1),(1,1,r) \right) \left( (h,t,1), 
(1,1,m) \right)\right).
\end{eqnarray*}

Let $(g,s,1),(h,t,1) \in B1_{\circ}$ and $(1,1,r), (1,1,k) \in R$.
\begin{eqnarray*}
\lefteqn{\varphi\left(\left( (g,s,1), (1,1,r) \right) \circ \left(\left( 
h,t,1\right),\left( 
1,1,k \right)\right) \right)}\\ 
&=& \varphi \left( (g,s,1) \circ 
\sigma_{\overline{\delta_{(g,s,1)}(\overline{(1,1,r)})}}\left( h,t,1\right), 
\left( 1,1,r\right) \circ \left( \overline{ 
\delta_{\overline{\sigma_{\overline{(1,1,r)}}(g,s,1)}}(\left(1,1,k\right))}\right)\right)\\
&=&\varphi\left( (g,s,1) \circ \overline{\left( 
\rho_{\overline{\lambda_{\overline{(g,s,1)}}(1,1,r)}}\left( 
\overline{(h,t,1)}\right)\right)}, (1,1,r) \circ \left( 
\lambda_{\overline{\rho_{(1,1,r)}\left( \overline{(g,s,1)}\right)}}\left( 
1,1,k\right)\right)\right)\\
&=&\varphi\Big( (g,s,1) \circ \overline{(g,s,1)} \circ \left( g,s,r\right) 
\circ \left( (h,t,1) \left(\overline{(g,s,r)} \circ (g,s,1)\right)\right),\\
&&(1,1,r) \circ \overline{(1,1,r)} \circ \left( g,s,r\right) \circ \left(\left( 
\overline{ (g,s,r)} \circ (1,1,r)\right)(1,1,k)\right)\Big)\\
&=&\left( g,s,r\right) \circ \left( (h,t,1) \left(\overline{(g,s,r)} \circ 
(1,1,r) \circ \overline{(1,1,r)} 
\circ \left( g,s,r\right)\right.\right.\\ 
&&\left. \left.\circ \left(\left( \overline{ (g,s,r)} \circ 
(1,1,r)\right)(1,1,k)\right)\right) (g,s,1)\right)\\
&=&(g,s,r) \circ \left( (h,t,1) \left( \overline{(g,s,r)} \circ (g,s,1)\right) 
\left(\overline{(g,s,r)} \circ \left(\left( 
g,s,r\right)(1,1,r)\right)\right)(1,1,r)\right)\\
&=&(g,s,r) \circ \left( (h,t,1) \left( \overline{(g,s,r)} \circ (g,s,1)\right) 
\left(\overline{(g,s,r)} \circ \left( (g,s,r) (1,1,r)\right)\right) (1,1,k) 
\right)\\
&=&(g,s,r) \circ \left( (h,t,1) \left( \overline{(g,s,r)} \circ (g,s,r)\right) 
(1,1,k)\right)\\
&=&(g,s,r) \circ (h,t,k) = \varphi\left(g,s,r\right) \circ  
\varphi\left(h,t,k\right).
\end{eqnarray*}
This shows that $\varphi$ is a left semi-brace homomorphism from $K \bowtie R$ 
into $B$. Clearly, $\varphi$ is bijective. Hence, $B \cong K  \bowtie R$.
\end{proof}



\begin{theorem}
Let $(B,\cdot , \circ)$ be a left semi-brace.   The following conditions are equivalent.
\begin{enumerate}
\item $\rho$ is an anti-homomorphism,
\item $B\cong  ( 1_{\circ} B 1_{\circ} \bowtie E(B1_{\circ}) )) \bowtie E(1_{\circ}B)$ and $E(B)$ is a left  subsemi-brace of $B$.
\end{enumerate}
\end{theorem}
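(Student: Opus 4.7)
The plan is to combine Theorem~\ref{characttheorem}, which already gives the outer matched product $B \cong (B1_{\circ}) \bowtie E(1_{\circ}B)$ in the forward direction, with a further matched product decomposition of the right cancellative factor $K = B1_{\circ}$ as $1_{\circ}B1_{\circ} \bowtie E(B1_{\circ})$; the reverse implication will be extracted from Proposition~\ref{charactanti} after unfolding the matched product formulas.

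For $(1) \Rightarrow (2)$, I would first invoke Theorem~\ref{characttheorem} to obtain $B \cong (B1_{\circ}) \bowtie E(1_{\circ}B)$ and Proposition~\ref{charactanti} to conclude that $E(B)$ is a left subsemi-brace. The remaining task is to prove $B1_{\circ} \cong 1_{\circ}B1_{\circ} \bowtie E(B1_{\circ})$. Corollary~\ref{GroupIdempotent} provides, for every $b \in B1_{\circ}$, a unique decomposition $b = g \circ e$ with $g \in 1_{\circ}B1_{\circ}$ and $e \in E(B1_{\circ})$, which yields the bijection $\varphi : 1_{\circ}B1_{\circ} \times E(B1_{\circ}) \to B1_{\circ}$, $(g,e) \mapsto g \circ e$. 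Mimicking the construction in Theorem~\ref{characttheorem}, I would define $\delta_g(e) = \overline{\lambda_{\overline{g}}(\overline{e})}$ for $g \in 1_{\circ}B1_{\circ}$ and $e \in E(B1_{\circ})$, and $\sigma_e(g) = \overline{\rho_{\overline{e}}(\overline{g})}$, and then verify the five axioms of Definition~\ref{Matchedproductsemibrace}. Since $K = B1_{\circ}$ is itself a left subsemi-brace on which $\rho$ inherits the anti-homomorphism property from $B$, these verifications proceed by exactly the manipulations already performed in Theorem~\ref{characttheorem}, only now carried out internal to $K$, and $\varphi$ is shown to transport the matched product operations to the operations of $K$.

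For $(2) \Rightarrow (1)$, I would verify condition~(3) of Proposition~\ref{charactanti}. Unfolding the given nested matched product identifies $B$, as a multiplicative semigroup, with the coordinatewise product on triples $(g, e_K, f_R)$ with $g \in 1_{\circ}B1_{\circ}$, $e_K \in E(B1_{\circ})$ and $f_R \in E(1_{\circ}B)$, hence with the completely simple semigroup $\mathcal{M}(1_{\circ}B1_{\circ}, |E(B1_{\circ})|, |E(1_{\circ}B)|, \mathcal{I}_{|E(1_{\circ}B)|,|E(B1_{\circ})|})$. Under this identification, the elements of $E(B)$ are precisely the triples with trivial group coordinate, and the hypothesis that $E(B)$ is a left subsemi-brace guarantees that composing in $(B,\circ)$ with such a triple stays inside the idempotent set. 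A direct expansion of the $\circ$-formula of Definition~\ref{Matchedproductsemibrace} (applied twice, for the nested matched product) then shows that this composition fixes the group coordinate, which is precisely condition~(3) of Proposition~\ref{charactanti}.

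The main obstacle is the matched product decomposition of $K$ in the forward direction: although the underlying bijection is handed to us by Corollary~\ref{GroupIdempotent}, verifying the five matched product axioms requires careful manipulation of the $\circ$-inverse together with the semi-brace identity, and in essence re-runs the longest calculation in the proof of Theorem~\ref{characttheorem}. The reverse direction is by contrast largely bookkeeping, once the action formulas of the nested matched product are written out explicitly.
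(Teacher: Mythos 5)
Your overall plan coincides with the paper's: use Theorem~\ref{characttheorem} and Proposition~\ref{charactanti} for the outer layer $B\cong (B1_{\circ})\bowtie E(1_{\circ}B)$ and for $E(B)$ being a subsemi-brace, then decompose $K=B1_{\circ}$ further as $1_{\circ}B1_{\circ}\bowtie E(B1_{\circ})$, and get the converse by checking condition (3) of Proposition~\ref{charactanti}. However, your construction of the inner matched product contains a genuine error: the action formulas cannot simply be transplanted from Theorem~\ref{characttheorem}. For $g\in G=1_{\circ}B1_{\circ}$ and $e\in E(B1_{\circ})$ your map $\delta_g(e)=\overline{\lambda_{\overline{g}}(\overline{e})}$ lands in $E(1_{\circ}B)$, not in $E(B1_{\circ})$: by Lemma~\ref{lambdaidempot} one has $\lambda_{\overline{g}}(\overline{e})\in E(1_{\circ}B)$, and $E(1_{\circ}B)$ is closed under $\circ$-inversion, while $E(1_{\circ}B)\cap E(B1_{\circ})=\{1_{\circ}\}$. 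Similarly, since $g\,\overline{e}=g$ in $\mathcal{M}(G,I,J,\mathcal{I})$, one computes $\sigma_e(g)=\overline{\rho_{\overline{e}}(\overline{g})}=e\circ g$, which is a general element of $B1_{\circ}$ (with idempotent component $e$) rather than an element of $G$. So neither map has the codomain demanded by Definition~\ref{Matchedproductsemibrace}, and the five axioms cannot even be stated, let alone verified.

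The source of the trouble is an order reversal that your ``mimicking'' overlooks: in the outer decomposition an element of $B$ is (first factor)$\cdot$(second factor), $(g,s,r)=(g,s,1)(1,1,r)$, whereas an element of $K$ factors as (idempotent)$\cdot$(group element), $(g,k,1)=(1,k,1)(g,1,1)$, i.e.\ with the second factor on the left. Accordingly the roles of $\lambda$ and $\rho$ interchange: the correct actions, as in the paper, are $\sigma_e=\lambda_e$ on $G$ and $\delta_g=\rho_g$ on $E(B1_{\circ})$ (well defined by Lemma~\ref{rhoidempotisidempot}), taken without the bar-conjugation, and the identifying bijection is the multiplicative one $(g,s)\mapsto sg$ rather than your $(g,e)\mapsto g\circ e$ from Corollary~\ref{GroupIdempotent} --- the latter is a bijection but is not componentwise multiplicative, which the matched product requires; the two parametrizations are related by $sg=g\circ\overline{\rho_g(\overline{s})}$. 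With these corrections the verification does go through essentially as you describe. Your sketch of $(2)\Rightarrow(1)$ is consistent with the paper's argument, though you should make explicit where the hypothesis that $E(B)$ is a subsemi-brace is used, namely to guarantee that $\circ$-composing with an idempotent of $E(B1_{\circ})$ keeps the group coordinate trivial before the outer layer is unfolded.
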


\begin{proof}
Assume $\rho$ is an anti-homomorphism.
Put  $R = E(1_{\circ}B)$, $K = B1_{\circ}$ and $G = 1_{\circ}B1_{\circ}$ a skew left brace. From Theorem~\ref{characttheorem} we know 
	that  $ B \cong (B1_{\circ}) \bowtie R$. Also, by Proposition~\ref{charactanti}, $E(B)$ is a left subsemi-brace.
So in order to show $(2)$, it is sufficient to show that every right 
cancellative completely simple left semibrace $K \cong \mathcal{M}(G, I, 1, \mathcal{I}_{1,I})$ 
can be decomposed as a matched product $1_{\circ}K \bowtie E(K)$. Put $E= E(K)$. 
By Proposition~\ref{charactanti}, this is a left subsemi-brace. 
Every element of $K$ can be 
uniquely written as $sg$, with $s \in E$ and $g \in G$, and thus also $\overline{k}\in E$. Let $g \in G$ and $k 
\in E$.  By Corollary~\ref{GroupIdempotent}, there exists a $t \in E$ such that 
$\overline{k}g = g 
\circ t$. Hence, $$ \rho_g(k) = \overline{ (\overline{k}g)}\circ g = 
\overline{g \circ t } \circ g = \overline{t}. $$ Note that if $g = 1$, then $t= 
\overline{k}$. Hence, $\rho_1 = \textup{id}_E=\rho_{g} \circ \rho_{\overline{g}} =\rho_{\overline{g}} \circ \rho_{g}$. 
Thus, $g \mapsto \rho_g$ defines an  anti-homomorphism $\delta : G 
\longrightarrow \text{Sym} (K)$. 
Put $\delta_g =  \delta (g)$. 
Because of Lemma~\ref{lambdaidempot} and 
Lemma \ref{behaviorlambda} the map $\sigma: E \longrightarrow 
\textup{Aut}(G,\cdot)$, defined by $k \mapsto \lambda_k$ is a homomorphism. We 
will now show that these maps define a matched product of the left semi-braces 
$E$ and $G$. 
By the above, $\sigma$, respectively $\delta$, are  a left, respectively  right,  action on $G$, respectively $E$.
Clearly, for any $g \in G$ and $k \in E$, \begin{align*} \sigma_k(1_{\circ}) &= 
\lambda_k(1_{\circ}) = k \circ (\overline{k}1_{\circ}) = 1_{\circ}, \\ \delta_g(1_{\circ}) &= \rho_g(1_{\circ}) = 
\overline{(\overline{1_{\circ}} g )} \circ g = \overline{g} \circ g = 1_{\circ}. \end{align*}
Let $k \in E$ and $g,h \in G$. Then,
 \begin{align*}
\sigma_k(g) \circ \sigma_{\delta_g(k)}(h) &= \lambda_k(g) \circ 
\lambda_{\rho_{g}(k)}(h) = k \circ ( \overline{k}g) \circ \rho_g(k) \circ ( 
\overline{\rho_g(k)}h) \\ &= k \circ (\overline{k}g) \circ \overline{ 
(\overline{k}g)} \circ g \circ (( \overline{g} \circ (\overline{k}g)) h) \\ &= 
k \circ g \circ ((\overline{g} \circ (\overline{k}g))h) = k \circ ( 
\overline{k}g ( g \circ (\overline{g}h))\\ &= k \circ ( \overline{k}g 
\lambda_g(h))= k \circ ( \overline{k}g \circ(1_{\circ}h)) \\ &= k \circ 
(\overline{k} (g\circ h)) \\ &= \sigma_k(g \circ h)
 \end{align*}
Let $k,t \in E$ and $g \in G$, then 
\begin{align*}
\delta_{\sigma_t(g)}(k) \circ \delta_g(t) &= \rho_{\lambda_{t}(g)}(k) \circ 
\rho_g(t) = \overline{( \overline{k} \lambda_t(g) )} \circ \lambda_t(g) 
\circ \overline{( \overline{t}g)} \circ g\\& = \overline{ (\overline{ k} (t 
\circ \overline{t}g))} \circ t \circ g. \end{align*} 
We need to show that this equals
$ \delta_g(k \circ t) = \overline{ ( \left(\overline{ k\circ 
t}\right) g )} \circ g$.
Hence, we should show that $ \overline{ \left( (\overline{ k \circ t}) g 
\right) } = \overline{ \overline{ k} ( t \circ ( \overline{ t} g ))} \circ t, 
$ or equivalently, 
\begin{eqnarray}
 \overline{t } \circ ( \overline{ k } ( t \circ ( 
\overline{t } g ))) &=& \left(\overline{k \circ t }\right) g. \label{toshow}
\end{eqnarray}
This precisely 
coincides with the demand that $ (\overline{ t } \circ \overline{k}) ( 
\overline{t} \circ ( t \lambda_t(g)) = \left(\overline{k \circ t}\right) g$,
i.e. $$ \left(\overline{k \circ t}\right) \lambda_{\overline{ t}} \lambda_t(g) 
= \left(\overline{k \circ t}\right) g.$$ 
That this equality holds follows from the fact that $ \lambda_{\overline{t}}\lambda_{t} = 
\lambda_{\overline{t} \circ t} = \lambda_{1} = \textup{id}_E$ by
 Lemma~\ref{lambdaidempot}.

Since $E$ is a left zero semi-brace, we obviously get that
$$ \overline{ \delta_g ( \overline{ k t})} = \overline{ \delta_g ( \overline{ 
k})} \overline{ \delta_g ( \overline{ t})}, $$ for all $g \in G$ and $k,t \in 
E$.

Thus, we may construct the matched product $F= G \bowtie E$. It rests to show 
that this is isomorphic to $K$. Clearly, $(K, \cdot) \cong (F,\cdot)$ (under 
the 
natural  map). It rests to show that under the same map $(F,\circ) 
\cong (K, \circ)$. Let $sg, tf \in K$ with $s,t \in E$ and $g,f \in G$. It is 
clear that $ sg = g \circ \overline{ \rho_g( \overline{ s})}$. As $\rho_1 = 
\textup{id}_E$, we get, by replacing $s$ with 
$\overline{\rho_{\overline{g}}(\overline{s})}$, that $$ g \circ s = 
\overline{\rho_{ \overline{ g}}( \overline{s})} g.$$
On the other hand, $sg = s \circ \lambda_{\overline{s}}(g)$, and thus $$ s 
\circ g = s \lambda_s(g).$$
Then, we identify the component of $G$ using the previous remarks,
$$ (sg) \circ (tf) = g \circ \overline{ \rho_g (\overline{s})} \circ f \circ 
\overline{ \rho_f(\overline{t})} = g \circ \lambda_{\overline{ 
\rho_g(\overline{s})}}(f) \circ \underbrace{\dots}_{\in E}.$$ Thus, the group 
component of $(sg)\circ(tf)$ is $g \circ \overline{ \rho_{g} ( \overline{s})}$. 
On the other hand, we identify the component of $E$ using the previous remarks,
$$ (sg) \circ (tf) = s \circ \lambda_{ \overline{s}}(g) \circ t \circ \lambda_{ 
\overline{t}}(f) = s \circ \overline{ \rho_{ \overline{ \lambda_{ 
\overline{s}}(g)}}(\overline{t})} \circ \underbrace{\dots}_{\in G}. $$
Thus the idempotent component of $(sg) \circ (tf)$ is $s \circ \overline{ 
\rho_{ \overline{ \lambda_{ \overline{s}}(g)}}(\overline{t})}$.
So we have shown that $$ (sg) \circ (tf) = \left( s \circ \overline{ \rho_{ 
\lambda_{ \overline{s}}(g)}(\overline{t})} \right) \left( g \circ \lambda_{ 
\overline{ \rho_{g}( \overline{s})}}(f) \right)$$ and 
thus 
    $$ (sg) \circ (tf) = \left( s \circ \overline{ \delta_{\overline{ \sigma_{ \overline{s}}(g)}}(\overline{t})} \right) \left( g \circ \sigma_{ \overline{ \delta_{g}( \overline{s})}}(f) \right). $$
This shows  that the proposed mapping is indeed an isomorphism of left semi-braces. 

Thus, $( K, \cdot, \circ) \cong (G \bowtie E, \cdot, \circ)$. This proves the necessity of (2).

We will now prove the converse. So assume (2) holds.
Because of Proposition \ref{charactanti}, we need to show that the group 
component of $$ (h,k,l) \circ (1,s,t)$$ is also $h$. Clearly, $(h,k,l) \circ 
(1,s,t) = \left( (h,k,l) \circ (1,s,1) \right)\left( 
\lambda_{(h,k,l)}\left((1,1,t)\right)\right).$ As the last term is an 
idempotent by Lemma \ref{lambdaidempot}, it is sufficient to check that 
the group component of $$ (h,k,l) 
\circ (1,s,1)$$ is also $h$.
Since $B$ is a matched product, there exist $h' \in G$ and $k' \in I$ such that 
$$ (h,k,l) = (1,1,l) \circ (h',k',1).$$
We find that $$ (h',k',1) 
\circ (1,s,1) = (h',i,1),$$ for some $i \in I$. Thus, we should show that the 
group components $$ (1,1,l) \circ (h',k',1) \qquad \textup{and} \qquad (1,1,l) 
\circ (h',i,1) $$ coincide. Clearly, we can use the semi-brace property to see 
that $$ (1,1,l) \circ (h',k',1) = \underbrace{ (1,1,l) \circ (1,k',1)}_{\in 
E(B)} 
\lambda_{(1,1,l)}(h,1,1),$$ where $E(B)$ is the set of idempotents of $B$. 
Analogously, $$ (1,1,l) \circ (h',i,1) = \underbrace{(1,1,l) \circ 
(1,i,1)}_{\in E(B)} \lambda_{(1,1,l)}(h,1,1).$$ These elements only differ upto 
an 
idempotent. Hence, it is clear that their group component coincides. 
\end{proof}

\section{Ideals and the Socle of completely simple semi-braces}

 In this section, we introduce the concept of ideals for completely simple left 
 semi-braces $B$. We will assume in this section that the associated $\rho$-map 
 is an anti-homomorphism. At first the definition looks rather technical, 
 although it corresponds with the known definition in case $B$ is a left (skew) 
 brace.
However, we will show that, as desired, this definition corresponds with 
kernels of homomorphisms $f$ of left semi-braces. As for group homomorphisms,
the kernel measures the obstruction to injectivity.

\begin{definition} \label{defideaal}
	Let $B=(\mathcal{M}(G,I,J, \mathcal{I}_{|J|,|I|}),\cdot, \circ)$ be a completely simple left semi-brace 
	such that $\rho$ is an anti-homomorphism. A subsemigroup $I$ of $(B,\cdot)$ 
	is called an ideal if the following conditions are satisfied: 
	\begin{enumerate}
		\item $\left(I \cap G,\cdot\right)$ is a normal subgroup of $(G,\cdot)$,
		\item $\left(I,\circ\right)$ is a normal subgroup of $(B,\circ)$,
		\item $\rho_b(I) \subseteq I$ for all $b \in 1_{\circ}B$,
		\item $\lambda_a(I) \subseteq I$ for all $a \in B1_{\circ}$. 
	\end{enumerate} 
\end{definition}


 This notion corresponds with the notion of ideal defined in 
\cite{catino2017semi} for left cancellative left semi-braces.


\begin{lemma} \label{opsplitsing}
	Let $(B,\cdot,\circ)$ be a completely simple left semi-brace such that 
	$\rho$ is an anti-homomorphism. Let $I$ be an ideal of $B$. If 
	$(g,i,j) \in I$ then  $(g,1,1)$, $(1,i,1)$, $(1,1,j) \in I$.
	\end{lemma}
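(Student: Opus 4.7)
The plan is to extract the three components $(g,1,1)$, $(1,i,1)$, $(1,1,j)$ from $(g,i,j)$ using only the subsemigroup property, the fact that $I \cap G$ is a subgroup of $(G,\cdot)$, and the role of $1_{\circ} = (1,1,1)$ as a projection idempotent in the Rees matrix presentation $\mathcal{M}(G,I,J,\mathcal{I}_{|J|,|I|})$. The hard part is really only spotting what $1_{\circ}$ does; after that the proof is pure matrix arithmetic.

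First I would observe that $1_{\circ} \in I$, since $(I,\circ)$ is a subgroup of $(B,\circ)$ by condition $(2)$ of Definition \ref{defideaal}. Since $I$ is a subsemigroup of $(B,\cdot)$, the sandwich product
$$ 1_{\circ} \cdot (g,i,j) \cdot 1_{\circ} $$
lies in $I$. Because every entry of the sandwich matrix $\mathcal{I}_{|J|,|I|}$ equals $1$, a direct computation in $\mathcal{M}(G,I,J,\mathcal{I}_{|J|,|I|})$ shows this product equals $(g,1,1)$. Hence $(g,1,1) \in I \cap G$. By condition $(1)$ of Definition \ref{defideaal}, $I \cap G$ is a subgroup of $(G,\cdot)$, so the inverse $(g^{-1},1,1)$ also lies in $I$.

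A final pair of multiplications inside $I$, again using the subsemigroup property and $p_{jk} = 1$, gives
$$ (g^{-1},1,1) \cdot (g,i,j) = (1,1,j) \in I \quad \textup{and} \quad (g,i,j) \cdot (g^{-1},1,1) = (1,i,1) \in I, $$
completing the decomposition.

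Notice that conditions $(3)$ and $(4)$ on $\lambda$- and $\rho$-invariance play no role in this particular step; the entire argument rides on the projection role of $1_{\circ}$ together with normality of $I \cap G$ in $(G,\cdot)$. The only potential pitfall in writing this out is keeping the two uses of the symbol $I$ (the index set in the Rees matrix, and the ideal) visually distinct, but neither object interferes with the other in the computation.
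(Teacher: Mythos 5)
Your proof is correct and follows essentially the same route as the paper's: both first place $1_{\circ}=(1,1,1)$ in $I$, sandwich $(g,i,j)$ by $1_{\circ}$ to land $(g,1,1)$ in $I\cap G$, invert there using condition (1), and then multiply by $(g^{-1},1,1)$ to extract the remaining components. The only cosmetic difference is that the paper passes through the intermediate element $(1,i,j)$ before projecting with $(1,1,1)$, whereas you read off $(1,1,j)$ and $(1,i,1)$ directly from the two one-sided products.
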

\begin{proof}
	Let $(g,i,j) \in I$. Clearly, $(1,1,1) \in I$ as $G \cap I$ is a subgroup 
	of $(G,\cdot)$. Hence, because $(I,\cdot)$ is a subsemi-group of 
	$(B,\cdot)$,   $$(g,1,1) = (1,1,1) (g,i,j) (1,1,1) \in I\cap G.$$ This also 
	shows that $(1,i,j) = (g^{-1},1,1) (g,i,j) \in I$. Since $(1,i,1) = 
	(1,i,j) (1,1,1)$ and $(1,1,j) = (1,1,1)(1,i,j)$ we get that 
	$(1,i,1),(1,1,j) \in I$ and thus the result follows.
\end{proof}

Recall \cite{Howie} that an equivalence relation $ \sim $ on 
a semigroup $S$ is called a congruence relation if, for any $s,t,c,d \in S$ with $s \sim t$ and $c \sim d$ then 
 $sc \sim td$. Of course this induces a natural semigroup structure on  $S/\sim$.
Let  $(B,\cdot , \circ )$ be a left semi-brace. An equivalence relation $\sim$ is said to be a congruence on $B$ if $\sim$ is a congruence on
both $(B,\cdot )$ and $(B,\circ )$.

\begin{lemma}\label{rhoidempotisidempot}
	Let $(B,\cdot,\circ)$ be a completely simple left semi-brace such that 
	$\rho$ is an anti-homomorphism. Let $(g,1,1),(1,k,1) \in B$. Then, 
	$$\rho_{(g,1,1)}\left( (1,k,1)\right) \in E(B1_{\circ}).$$
\end{lemma}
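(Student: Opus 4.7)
The plan is to unwind the definition of $\rho_{(g,1,1)}\bigl((1,k,1)\bigr)=\overline{\overline{(1,k,1)}\cdot(g,1,1)}\circ(g,1,1)$ and then exploit the structural decomposition $B1_{\circ}=G\circ E(B1_{\circ})$ furnished by Corollary~\ref{GroupIdempotent} to show that the group component of the result is trivial. By Lemma~\ref{rhomap1} we already know $\rho_{(g,1,1)}\bigl((1,k,1)\bigr)\in B1_{\circ}$, so establishing this group-component claim is equivalent to establishing the desired conclusion, since in the matrix-semigroup description $(B,\cdot)=\mathcal{M}(G,I,J,\mathcal{I}_{|J|,|I|})$ the idempotents of $B1_{\circ}$ are exactly the elements $(1,s,1)$.

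The first step will be to identify $\overline{(1,k,1)}$. Since $(1,k,1)$ is an idempotent of $B1_{\circ}$, it lies in $E(B1_{\circ})$, which by the last part of Proposition~\ref{charactanti} is a subsemi-brace; in particular $(E(B1_{\circ}),\circ)$ is a subgroup of $(B,\circ)$, so $\overline{(1,k,1)}\in E(B1_{\circ})$, meaning $\overline{(1,k,1)}=(1,r,1)$ for some $r\in I$. A direct matrix computation then gives $\overline{(1,k,1)}\cdot(g,1,1)=(1,r,1)(g,1,1)=(g,r,1)$.

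Next, I would apply Corollary~\ref{GroupIdempotent} to the element $(g,r,1)\in B1_{\circ}$, whose group component is $g$: this yields $(g,r,1)=g\circ e$ for some $e\in E(B1_{\circ})$, hence
\[
\overline{(g,r,1)}=\overline{e}\circ g^{-1},
\]
where again $\overline{e}\in E(B1_{\circ})$ because $E(B1_{\circ})$ is $\circ$-closed under inverses. Substituting into the expression for $\rho$ gives
\[
\rho_{(g,1,1)}\bigl((1,k,1)\bigr)=\overline{e}\circ g^{-1}\circ(g,1,1)=\overline{e}\circ 1_{\circ}=\overline{e}\in E(B1_{\circ}),
\]
which is exactly what we need.

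The only potential obstacle is the bookkeeping around $\overline{(1,k,1)}$ and $\overline{(g,r,1)}$: one must be careful that inverses are taken in $(B,\circ)$ (not in $(B,\cdot)$) and that the subgroup property of $E(B1_{\circ})\subseteq (B,\circ)$ really does follow from Proposition~\ref{charactanti}. Once those two points are in hand, the proof reduces to a short string of substitutions.
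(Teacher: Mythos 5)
Your proof is correct and rests on the same structural fact as the paper's: an element of $B1_{\circ}$ with group component $g$ factors as $g\circ e$ with $e\in E(B1_{\circ})$. The difference is one of economy. The paper first reduces, via the group structure of $(E(B1_{\circ}),\circ)$, to showing $(g,k,1)=(g,1,1)\circ f$ for some $f\in E(B1_{\circ})$, and then re-derives this factorization from scratch for that particular element; you instead apply it to $(g,r,1)$ with $(1,r,1)=\overline{(1,k,1)}$ and simply invoke Corollary~\ref{GroupIdempotent}, which is proved earlier, does not depend on the present lemma (so there is no circularity), and is exactly the needed statement. Your route is therefore shorter and arguably cleaner. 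One notational slip: in $\overline{(g,r,1)}=\overline{e}\circ g^{-1}$ the symbol $g^{-1}$ should be $\overline{g}$, the inverse in $(B,\circ)$ --- in this paper $g^{-1}$ is reserved for the inverse in $(G,\cdot)$ --- but your subsequent cancellation $\overline{g}\circ g=1_{\circ}$ makes clear this is what you intend, and the rest of the bookkeeping (that $\overline{(1,k,1)}$ and $\overline{e}$ stay in $E(B1_{\circ})$ because $E(B1_{\circ})$ is a $\circ$-subgroup by Proposition~\ref{charactanti}) is handled correctly.
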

\begin{proof}
	As $(E(B1_{\circ}),\circ)$ is a group, it is sufficient to show that $$ 
	\overline{\rho_{(g,1,1)}(\overline{(1,k,1)})} \in E(B1_{\circ}).$$
	Equivalently, we need to show that $$ (g,k,1) = (g,1,1) \circ f,$$ for some 
	$f \in E(B1_{\circ})$.
	First, we show that there exist $(h,1,1) \in B$ and $f \in E(B1_{\circ})$ 
	such that $(g,k,1) = (h,1,1) \circ f$. Denote $\overline{(g,k,1)} = 
	(h,t,1)$. Then,
	\begin{eqnarray*}
	(g,k,1) \circ (1,t,1) &=& \left( (g,k,1) \circ (hh^{-1},t,1) \right)\\  &=& 
	\left( (g,k,1) \circ (h,t,1)\right) \left( 
	\lambda_{(g,k,1)}\left((h^{-1},1,1)\right)\right) \\ &=& 1_{\circ} 
	\lambda_{(g,k,1)}\left((h^{-1},1,1)\right)
	\end{eqnarray*}
	As all terms involved are elements of $B1_{\circ}$, it follows that 
	$(g,k,1) \circ (1,t,1) \in 1_{\circ}B1_{\circ}.$ Hence, there exists a 
	$(h',1,1) \in B$ and $(1,1,f) \in E(B1_{\circ})$ such that $ (g,k,1) = 
	(h',1,1) \circ (1,1,f)$. By Lemma \ref{charactanti}, $h' = g$. This shows 
	the result.
\end{proof}

\begin{prop}
	Let $(B,\cdot, \circ)$ be a completely simple left semi-brace such that 
	$\rho$ is an anti-homomorphism. Let  $I$ be  an ideal of $B$ and let $\sim_I$ be the 
	relation on $B$ given by $$ \textnormal{ for any } x,y \in B: x \sim_I y 
	\Leftrightarrow \overline{y} \circ x \in I.$$
	Then $\sim_I$ is a congruence of the left semi-brace $B$ and hence 
	$B/\sim_{I}$ has a natural   left semi-brace structure. We simply denote 
	this as $B/I$.
\end{prop}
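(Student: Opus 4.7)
The plan is to verify three things: that $\sim_I$ is an equivalence relation, that it is a congruence on both operations $\cdot$ and $\circ$, and that the induced operations on $B/\sim_I$ satisfy the semi-brace identity. Reflexivity, symmetry and transitivity are immediate from $(I,\circ)$ being a subgroup of $(B,\circ)$: $1_\circ \in I$ yields $x\sim_I x$; closure of $I$ under $\circ$-inverses yields symmetry; closure under $\circ$ yields transitivity. Compatibility with $\circ$ is the standard argument for a normal subgroup: if $\overline{y}\circ x,\ \overline{v}\circ u\in I$, then $\overline{y\circ v}\circ(x\circ u)=\bigl(\overline{v}\circ(\overline{y}\circ x)\circ v\bigr)\circ(\overline{v}\circ u)$ lies in $I$ by normality of $(I,\circ)$ in $(B,\circ)$.

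The substantive step is compatibility with $\cdot$. I plan to split this into two one-sided claims, (a) $x\sim_I y \Rightarrow xu\sim_I yu$ and (b) $u\sim_I v \Rightarrow yu\sim_I yv$, and combine them by transitivity. For each, Lemma~\ref{S2isS1S}(1) lets me rewrite $ab=a\,1_\circ\,b$; then, using the semi-brace identity and the defining formula $\lambda_a(b)=a\circ(\overline{a}b)$, I would aim to express $\overline{yu}\circ xu$ as $\lambda_c(\overline{y}\circ x)$ for some $c\in B1_\circ$, and $\overline{yv}\circ yu$ as $\rho_d(\overline{v}\circ u)$ for some $d\in 1_\circ B$. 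Conditions (4) and (3) of Definition~\ref{defideaal} then force both expressions into $I$.

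The main obstacle will be extracting the correct $\lambda$- and $\rho$-forms after the rewriting. This is precisely where the hypothesis that $\rho$ is an anti-homomorphism should intervene, via the equivalent condition $c(a\circ 1_\circ b)=c(a\circ b)$ from Proposition~\ref{charactanti}, which allows replacing $a\circ b$ by $a\circ 1_\circ b$ inside products and thus converts the relevant expressions into the desired images of $\lambda$ and $\rho$. Once $\sim_I$ is a congruence on both operations, $B/\sim_I$ inherits well-defined operations: $(B/\sim_I,\circ)$ is the ordinary group quotient, $(B/\sim_I,\cdot)$ is a semigroup, and the left semi-brace identity descends pointwise from $B$, giving $B/\sim_I$ the asserted left semi-brace structure.
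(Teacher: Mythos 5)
Your setup (equivalence relation, $\circ$-congruence from normality, splitting the $\cdot$-congruence into one-sided multiplications) is fine, but the core step is not: the identities you plan to establish are false, and the argument as sketched omits an indispensable hypothesis. Concretely, $\overline{y\cdot u}\circ(x\cdot u)$ cannot in general be written as $\lambda_c(\overline{y}\circ x)$: by Lemma~\ref{rhomap1} every value of every $\lambda_c$ lies in $1_{\circ}B$ (the first row of $\mathcal{M}(G,I,J,\mathcal{I})$), whereas $\overline{yu}\circ(xu)$ need not — already in the direct-product semi-brace of (\ref{productcompetelysimple}) with $x=(g,i,j)$, $y=(h,k,l)$, $i\neq k$, its middle index is $\overline{k}\circ i\neq 1$. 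A second failure: for the trivial skew brace on a nonabelian group ($\cdot=\circ$), all $\lambda_c=\mathrm{id}$, while $\overline{yu}\circ(xu)=u^{-1}(y^{-1}x)u\neq y^{-1}x$ in general; what saves the day there is conjugation-invariance of $(I,\cdot)$, i.e.\ condition (1) of Definition~\ref{defideaal}. Your proof never uses condition (1) (normality of $(I\cap G,\cdot)$ in $(G,\cdot)$), and it cannot be dispensed with: even for skew braces, $\lambda$- and $\rho$-invariance plus $\circ$-normality do not imply that $\sim_I$ respects $\cdot$; that is exactly why multiplicative normality is part of the definition of an ideal.

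What is actually needed, and what the paper spends its effort on, is the componentwise reduction forced by the multiplication rule $(g,i,j)(h,k,l)=(gh,i,l)$, which discards the indices $j$ and $k$. One must prove the two equivalences: $(g,i,j)\sim_I(h,k,l)$ iff $(g,i,1)\sim_I(h,k,1)$ and $(1,1,j)\sim_I(1,1,l)$; and $(g,i,1)\sim_I(h,k,1)$ iff $(g,1,1)\sim_I(h,1,1)$ and $(1,i,1)\sim_I(1,k,1)$. These do use the conversions into $\lambda$- and $\rho$-images that you anticipate (together with Lemma~\ref{opsplitsing}, Corollary~\ref{GroupIdempotent} and Proposition~\ref{charactanti}), but only to control the index components; the group component $gh\sim_I fd$ is then handled by observing that $I\cap 1_{\circ}B1_{\circ}$ is an ideal of the skew left brace $1_{\circ}B1_{\circ}$, where condition (1) enters. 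Without this decomposition and without condition (1), claims (a) and (b) as you state them do not go through, so the proposal has a genuine gap at its central step.
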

\begin{proof}
Again let $R _{1}= E(1_{\circ}B)$ and 
	$K_{1} = E(B1_{\circ})$; both are left subsemi-braces of $B$ by Theorem~\ref{characttheorem} and Proposition~\ref{charactanti}. 
	Put $F_R= I \cap R_{1}$ and $F_K = I \cap K_{1}$. Clearly, these 
	are normal subgroups of $(R_{1},\circ)$, respectively $(K_{1},\circ)$.
	
	Of course, $\sim_I$ is a congruence for the group $(B,\circ)$. Moreover, $N 
	= 
	I \cap G$ is a normal subgroup of $(G,\circ)$.  It 
	remains to show that $\sim_I$ is a congruence of $(B,\cdot)$. Let $(g,i,j), 
	(h,k,l) \in B$. Because $(h,k,l) =(h,k,j)(1,j,l)$, the semi-brace property yieds
	\begin{equation} 
	\label{firstsplit} \overline{(g,i,j)} \circ (h,k,l) = 
	\left(\overline{(g,i,j)} \circ (h,k,j)\right) \left( \overline{(g,i,j)} 
	\circ (g,i,l)\right).
	\end{equation}
	Since, $\rho_{(1,1,j)}( \overline{(g,i,1)}) = \overline{(g,i,j)} \circ (1,1,j)$ and $\rho_{ (1,1,j)}( \overline{ (h,k,1)}) = \overline{(h,k,j)}\circ (1,1,j)$ 
	we get that 
	\begin{eqnarray*}
	\overline{(g,i,j)} \circ (h,k,j) &=& \rho_{(1,1,j)}( \overline{(g,i,1)}) \circ \overline{(1,1,j)} \circ (h,k,j) \\ 
	&= &\rho_{(1,1,j)}(\overline{(g,i,1)}) \circ \overline{ \overline{ (h,k,j)} \circ (1,1,j) } \\ 
	&= &\rho_{ (1,1,j)}(\overline{(g,i,1)}) \circ \overline{ \rho_{ (1,1,j)}( \overline{ (h,k,1)})}. 
	\end{eqnarray*}
	Because $B1_{\circ}$ is a 
	subsemi-brace and because of  Lemma \ref{rhomap1},
	\begin{eqnarray}
	\overline{(g,i,j)} \circ (h,k,j) &\in& B1_{\circ}. \label{containedin1}
	\end{eqnarray}
	Furthermore, by Lemma \ref{lambdaidempot}, 
	\begin{eqnarray}
	 \overline{ (g,i,j)} \circ 
	(g,i,l) = \lambda_{\overline{ (g,i,j)}}(1,1,l) &\in & R_{1}. \label{containedin2}
	\end{eqnarray}
	
	We next prove two equivalences (\ref{step1}) and (\ref{step2}).
	First we show, for any $(g,i,j)$ and  $(h,k,l) \in B$,
	\begin{eqnarray} 
	\label{step1} \hspace{0,5cm}(g,i,j) \sim_I (h,k,l) &\textnormal{ if and only if }& (g,i,1) 
	\sim_I (h,k,1) \textnormal{ and } (1,1,j) \sim_I (1,1,l).
	\end{eqnarray} 
	
	Indeed, $(h,k,l) \sim_I (g,i,j)$ means that
	$\overline{ (g,i,j)} \circ (h,k,l) \in I$. Hence, by (\ref{containedin1}) and (\ref{containedin2}), 
	$$ 
	\overline{(g,i,j)} \circ (h,k,j) \in I \cap B1_{\circ} \textnormal{ and } 
	\overline{(g,i,j)} \circ (g,i,l) \in F_R.
	$$
	Because $\lambda$ is a homomorphism we have that 
\begin{eqnarray}
	\overline{(g,i,j)} \circ (g,i,l) &=& \lambda_{ \overline{ (g,i,j)}}(1,1,l) = 
	\lambda_{ \rho_{ (1,1,j)} (\overline{ (g,i,1)} \circ \overline{ 
	(1,1,j)})}(1,1,l) \nonumber \\ 
	&=& \lambda_{ \rho_{ (1,1,j)}( \overline{ (g,i,1)})} 
	\left( \lambda_{ \overline{ (1,1,j)}} ( 1,1,l) \right)  \nonumber \\ 
	&= &
	\lambda_{\rho_{ (1,1,j)} ( \overline{ (g,i,1)} )}\left( \overline{(1,1,j)} 
	\circ (1,1,l) \right). \label{lambdain}
	\end{eqnarray}
	Because of Lemma~\ref{rhomap1}, $\rho_{ (1,1,j)} ( \overline{ (g,i,1)})\in 
	B1_{\circ}$. Hence, 
	by Definition \ref{defideaal}, 
	$$  \overline{ (1,1,j)} 
	\circ (1,1,l) = \lambda_{ \overline{ \rho_{ (1,1,j)} (\overline{ (g,i,1)}) 
	}} \left(  \overline{( 
	g,i,j)} \circ (g,i,l) \right) \in I, 
	$$ 
	i.e. $(1,1,l) \sim_I (1,1,j)$. 
	
	Now
	\begin{eqnarray*}
	\overline{(g,i,j)} \circ (h,k,j) 
	&=& \left( \overline{(g,i,j)} \circ (h,k,1)\right) \left( \overline{(g,i,j)} 
	\circ (g,i,j)\right)\\
	&= &\left( \overline{(g,i,j)} \circ (h,k,1)\right)1_{\circ}\\
	&=&\overline{ \left( \overline{ (h,k,1)} \circ (g,i,j)\right)}1_{\circ}
	\end{eqnarray*}
         So, by   the semi-brace property, 
	\begin{eqnarray*}
	\lefteqn{\overline{(g,i,j)} \circ (h,k,j)}\\
	&=&\overline{ \left( \overline{ (h,k,1)} \circ (g,i,1)\right) \left( 
	\overline{(h,k,1)} \circ (h,k,j)\right)}1_{\circ}\\
	&=&  \left( \overline{ \left( \overline{ (h,k,1)} \circ (g,i,1)\right) \left( 
	\overline{(h,k,1)} \circ (h,k,j)\right)} \right.\\
	&& \left.  \circ \overline{(h,k,1)} \circ 
	(h,k,j) \circ \overline{ (h,k,j)} \circ (h,k,1)\right) 1_{\circ}\\
%
	&=&\left( \rho_{\overline{(h,k,1)} \circ (h,k,j)} \left( \overline{ (g,i,1)} 
	\circ (h,k,1)\right) \circ \overline{ (h,k,j)} \circ (h,k,1)\right)1_{\circ}\\
	&=&\left( \rho_{\overline{(h,k,1)} \circ (h,k,j)} \left( \overline{ (g,i,1)} 
	\circ (h,k,1) \right) \circ \overline{ \overline{ (h,k,1)} \circ (h,k,j)} 
	\right) 1_{\circ}\\
	&=& \left( \rho_{\overline{(h,k,1)} \circ (h,k,j)} \left( \overline{ 
	(g,i,1)} \circ (h,k,1) \right) \circ \left( \overline{ 
	\lambda_{\overline{(h,k,1)}} (1,1,j)}\right)\right)1_{\circ}
	\end{eqnarray*}
	Put $x=\rho_{\overline{(h,k,1)} \circ (h,k,j)} \left( \overline{ 
	(g,i,1)} \circ (h,k,1) \right)$ and $y = \overline{ 
	\lambda_{\overline{(h,k,1)}} (1,1,j)}$. By Lemma~\ref{rhomap1}, $y \in E(1B)$ and $x\in B1_{\circ}$.
	Hence,   the previous equality becomes,
        \begin{eqnarray*}
	\overline{(g,i,j)} \circ (h,k,j) &=& \left(x \circ (1_{\circ}y)\right) 1_{\circ}
	= x \lambda_x(y) 1_{\circ}
	= x 1_{\circ} \\
	  &=&x =
	\rho_{\overline{(h,k,1)} \circ (h,k,j)} \left( \overline{ (g,i,1)} \circ 
	(h,k,1) \right)
	\end{eqnarray*}
	 We thus  get 
	that 
	\begin{eqnarray} \overline{(g,i,1)} \circ (h,k,1)& =&\rho_1 ( 
	\overline{(g,i,1)} \circ (h,k,1)) \nonumber \\ 
	&= &\rho_{\overline{\overline{ (h,k,1)}  \circ (h,k,j)}} \rho_{\overline{ (h,k,1)} \circ (h,k,j)} (\overline{ 
	(g,i,1) } \circ (h,k,1)) \nonumber \\ 
	&=& \rho_{\overline{(h,k,j)}\circ (h,k,1)}\left( 
	\overline{ (g,i,j)} \circ (h,k,j)\right) . \label{rhoequality}
	\end{eqnarray}
	Because  $\overline{(g,i,j)} \circ (h,k,j) \in I$, and since $\overline{(h,k,j)}\circ (h,k,1) \in 1_{\circ}B$ by (\ref{containedin2}),
         we thus obtain that  $(h,k,1) \sim_{I} (g,i,1)$. So, we have proved the necessity of (\ref{step1}).
	
	That these conditions are sufficient also easily follows from some of the 
	above obtained equalities.
	Indeed, because of (\ref{rhoequality}), if $\overline{(g,i,1)} \circ 	
	(h,k,1) \in I$ then  $\overline{ 
	(g,i,j)} \circ (h,k,j) \in I$.
	Because of (\ref{lambdain}), if $\overline{(1,1,j)} \circ (1,1,l) \in I$ 
	then $\overline{(g,i,j)} \circ (g,i,1) \in I$.
	Hence, because of (\ref{firstsplit}), the conditions $\overline{(g,i,1)} 
	\circ 	(h,k,1) \in I$ and 
	$\overline{(1,1,j)} \circ (1,1,l) \in I$ imply that $\overline{(g,i,j)} \circ (h,k,l)$. This finishes the proof of the sufficiency.
	
	
	We now prove the second promised equivalence:
	 \begin{eqnarray} 
	\label{step2} (g,i,1) \sim_I (h,k,1) \textnormal{ if and only if } (g,1,1) 
	\sim_I 
	(h,1,1) \textnormal{ and } (1,i,1) \sim_I (1,k,1). 
	\end{eqnarray}
	Indeed, 
	suppose $(g,i,1) \sim_I (h,k,1)$, that is  $\overline{(g,i,1)}\circ (h,k,1) 
	\in 
	I$. As $(I,\circ )$ is a normal subgroup of $(B,\circ)$, it follows that 
	$$
	\lambda_{\overline{(1,i,1)}}(g,1,1) \circ \overline{(g,i,1)}\circ (h,k,1) 
	\circ \overline{\lambda_{\overline{(1,i,1)}}(g,1,1)} \in I. 
	$$ 
	Put $x=\lambda_{\overline{(1,k,1)}}(h,1,1) \circ  \overline{\lambda_{\overline{(1,i,1)}}(g,1,1)}$.
	Because of Lemma~\ref{rhomap1}, $x\in 1_{\circ}B$.
	It follows that
	{\small \begin{eqnarray*} 
	\lefteqn{
	\lambda_{\overline{(1,i,1)}}(g,1,1) \circ 
	\overline{(g,i,1)}\circ (h,k,1) \circ 
	\overline{\lambda_{\overline{(1,i,1)}}(g,1,1)} }\\
	&=&\lambda_{\overline{(1,i,1)}}(g,1,1) \circ \overline{\overline{(1,i,1)} 
	\circ (g,i,1)} \circ \overline{(1,i,1)} \\
	&&\circ (1,k,1) \circ 
	\overline{(1,k,1)} \circ (h,k,1) \circ \overline{ 
	\lambda_{\overline{(1,i,1)}}(g,1,1)}\\
	&=& \lambda_{\overline{(1,i,1)}}(g,1,1) \circ 
	\overline{\lambda_{\overline{(1,i,1)}}(g,1,1)} \circ 
	\overline{(1,i,1)} \circ (1,k,1) \circ 
	\lambda_{\overline{(1,k,1)}}(h,1,1) \circ 
	\overline{\lambda_{\overline{(1,i,1)}}(g,1,1)}\\
	&=& \overline{(1,i,1) } \circ (1,k,1) \circ (1_{\circ}x) \\
	&=& \left(\overline{(1,i,1) } \circ (1,k,1) \right)
	\lambda_{\overline{(1,i,1) } \circ (1,k,1)}(x), 
	\end{eqnarray*}
	}
	Now, $\overline{(1,i,1) } \circ (1,k,1) \in K_1$ and $\lambda_{\overline{(1,i,1) } \circ (1,k,1)}(x)$. Hence,
	it follows from Lemma~\ref{opsplitsing}  that
	$\overline{(1,i,1) } \circ (1,k,1)\in I$, i.e.  $(1,i,1) \sim_I 	(1,k,1)$. 
	
	Moreover,  since $(I,\circ)$ is a normal subgroup of 
	$(B,\circ)$ , 
   \begin{eqnarray*}
	\lefteqn{\overline{ \rho_{(g,1,1)}\left(\overline{(1,i,1)}\right)} \circ 
	\overline{(g,i,1)} \circ (h,k,1) 
	\circ \rho_{(g,1,1)}\left( \overline{(1,i,1)}\right)} 	 \\
	&=&\overline{\rho_{(g,1,1)} \left( \overline{(1,i,1)}\right)} \circ 
	\overline{(g,i,1)} \circ (g,1,1) \circ \overline{(g,1,1)}\\
	&& \circ (h,1,1) 
	\circ \overline{(h,1,1)} \circ (h,k,1) \circ \rho_{(g,1,1)}\left( 
	\overline{(1,i,1)}\right)\\
	&=&  \overline{ \rho_{(g,1,1)}\left(\overline{(1,i,1)}\right)} \circ 
	\rho_{(g,1,1)}\left(\overline{(1,i,1)}\right) \circ  
	\overline{ (g,1,1)}\\
	&& \circ (h,1,1) \circ \overline{ \rho_{ (h,1,1)} (1,k,1)} 
	\circ \rho_{(g,1,1)}\left(\overline{(1,i,1)}\right) \\
	&=& \overline{ (g,1,1)} \circ (h,1,1) \circ 
	\overline{ \rho_{ (h,1,1)} (1,k,1)} \circ 
	\rho_{(g,1,1)}(1,i,1) \in I.
	\end{eqnarray*}
	By Proposition \ref{charactanti} and Lemma \ref{rhoidempotisidempot},
	$ \overline{ (g,1,1)} \circ (h,1,1) \in G$ and $\overline{ \rho_{ (h,1,1)} (1,k,1)} \circ 
	\rho_{(g,1,1)}(1,i,1)\in E(B1_{\circ})$.
	  By Corollary \ref{GroupIdempotent}, multiplying (in $(B,\circ)$) on the 
	  right by an element of $E(B1_{\circ}$ does not change the group component 
	  of an element of $B1_{\circ}$.	
           Hence, by 
	Lemma \ref{opsplitsing}, it follows that $(g,1,1) \sim_I (h,1,1)$.
	This proves the necessity of the conditions in (\ref{step2}).

	We now  show that they these condition are sufficient. So, suppose  $(1,i,1) \sim_I (1,k,1)$ and 
	$(g,1,1) 
	\sim_I (h,1,1)$. Then, as $I$ is a normal subgroup of $(B,\circ)$, 
{\small 	\begin{align*}
	&\overline{(g,i,1)} \circ (h,k,1) \\ 
	=& \overline{\lambda_{\overline{(1,i,1)}}(g,1,1)} \circ \overline{(1,i,1)} 
	\circ (1,k,1) \circ \lambda_{ \overline{ (1,k,1)}}(h,1,1) \\
	=& \underbrace{\overline{\lambda_{\overline{(1,i,1)}}(g,1,1)} \circ 
	\overline{(1,i,1)} \circ (1,k,1) \circ 
	\lambda_{\overline{(1,i,1)}}(g,1,1)}_{\in I} \circ 
	\underbrace{\overline{\lambda_{\overline{(1,i,1)}}(g,1,1)} \circ \lambda_{ 
	\overline{ (1,k,1)}}(h,1,1)}_{=x \in 1_{\circ}B1_{\circ}} \\ 
	=& \underbrace{\overline{\lambda_{\overline{(1,i,1)}}(g,1,1)} \circ 
	\overline{(1,i,1)} \circ (1,k,1) \circ 
	\lambda_{\overline{(1,i,1)}}(g,1,1)}_{=t\in I \cap B1_{\circ}} 
	\lambda_{t}(x),
	\end{align*}
	}
	where the last equality holds by $(\ref{lambdaproduct})$ for some $r \in 
	B$. Both $t$ and $\overline{(g,i,1)} \circ (h,k,1)$ are elements in 
	$B1_{\circ}$. Hence, there exist $\alpha,\beta \in E(B1_{\circ})$ and $ a,b 
	\in 1_{\circ} B 1_{\circ}$ such that $ t = \alpha a$ and 
	$\overline{(g,i,1)} \circ (h,k,1)=\beta b$.
	Since $\lambda_t(x) \in 1_{\circ}B1_{\circ}$, it is clear from the previous 
	calculation that $\alpha = \beta$. Hence, it follows by 
		Lemma 
		\ref{opsplitsing} that $\alpha$ is 
	contained in $I$. On the other hand, again because $I$ is a normal subgroup 
	of $(B,\circ)$, \begin{align*}
	\overline{(g,i,1)} \circ (h,k,1) 
	= \rho_{(g,1,1)}\left(\overline{(1,i,1)}\right) \circ \overline{(g,1,1)} 
	\circ (h,1,1) \circ 
	\overline{\rho_{(h,1,1)}\left(\overline{(1,k,1)}\right)},
	\end{align*}
	 we get, by Lemma \ref{rhoidempotisidempot}, that 
	{\small \begin{align*}
	 \underbrace{\rho_{(g,1,1)}\left( \overline{(1,i,1)}\right) \circ 
	 \overline{(g,1,1)} \circ (h,1,1) 
	\circ \overline{ \rho_{(g,1,1)}\left(\overline{(1,i,1)}\right)}}_{ \in I 
	\cap B1_{\circ}} \circ 
	\underbrace{\rho_{(g,1,1)}(1,i,1) \circ \overline{\rho_{(h,1,1)}(1,k,1)}}_{ 
	\in E(B1_{\circ}}.
	\end{align*}
	}
	By Corollary \ref{GroupIdempotent}, the group component of an element 
	remains 
	fixed under right multiplication with an idempotent of $B1_{\circ}$. Thus, 
	by Lemma 
	\ref{opsplitsing}, the group 
	component of $\overline{(g,i,1)} \circ (h,k,1)$ is contained in $I$. Since 
	$(I,\cdot)$ is a subsemi-group of $(B,\cdot)$ we get that 
	$\overline{(g,i,1)} 
	\circ (h,k,1) \in I$. Hence, we have proven the equivalence (\ref{step2}).
	
	We are now in a position to finish the proof by prove that  $\sim_I$ is a congruence on 	$(B,\cdot)$. 
	Let $(g,i,j) \sim_I (f,l,m)$ and $(h,t,s) \sim_I (d,n,p)$. Then, by  the equivalences (\ref{step1}) and (\ref{step2}), 
	 \begin{eqnarray}
	(1,1,s) \sim_I (1,1,p), && 
	(g,1,1) \sim_I (f,1,1) \label{eqskew1}\\
	(h,1,1) \sim_I (d,1,1),&& 
	(1,i,1) \sim_I (1,l,1). \label{eqskew2}
	\end{eqnarray}
	We show that $I \cap 1_{\circ} B 1_{\circ}$ is an ideal of the skew left 
	brace $1_{\circ}B1_{\circ}$ (see Proposition \ref{rowcol}). As $(I,\circ)$ 
	is a normal subgroup of $(B,\circ)$, it follows that $(I \cap 
	1_{\circ}B1_{\circ}, \circ)$ is a normal subgroup of 
	$(1_{\circ}B1_{\circ},\circ)$. Furthermore, by definition of ideals of left 
	semi-braces, $(I \cap 1_{\circ}B1_{\circ},\cdot)$ is a normal subgroup of 
	$(1_{\circ}B1_{\circ},\cdot)$. Lastly, it is clear, by definition of an 
	ideal of a left semi-brace, that $\lambda_a(I\cap 1_{\circ}B1_{\circ}) \subseteq I\cap 1_{\circ}B1_{\circ}$ for some $ a 
	\in 1_{\circ}B1_{\circ}$. 
	Thus, $I\cap 1_{\circ}B1_{\circ}$ is an ideal of skew left braces.
	It follows from the (\ref{eqskew1}) and (\ref{eqskew2})  that $$ (gh,1,1) 
	\sim_I 
	(fd,1,1).$$ 
	So, by the equivalences (\ref{step1}) and (\ref{step2}) and by (\ref{eqskew1}) and (\ref{eqskew2}),
	 $$ (g,i,j)(h,t,s)=(gh,j,s) \sim_I (fd,l,p)=(f,l,m)(d,n,p).$$
	Thus, $\sim_I$ is a congruence on $(B,\cdot)$.
\end{proof}

\begin{definition}
	Let $(B,\cdot,\circ)$ and $ (B_1,\cdot,\circ)$ be left semi-braces.
	The kernel of a  a left semi-brace homomorphism $f:B \rightarrow B_1$ is 
	defined as $$ \ker f = \left\lbrace a \in B \mid f(a) = 1_{\circ} \right\rbrace.$$
\end{definition}

\begin{corollary}
	Let $(B,\cdot,\circ)$ be a completely simple left semi-brace such that 
	the associated $\rho$-map is an anti-homomorphism. 
	The ideals of $B$ are precisely the kernels of left semi-brace homomorphisms with domain $B$.
\end{corollary}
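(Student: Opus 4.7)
The plan is to prove both directions of the claimed equivalence, using the preceding proposition that $\sim_I$ is a congruence on $B$ whenever $I$ is an ideal.

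For the easy direction, I would start from an ideal $I$ and construct a homomorphism whose kernel is $I$. By the preceding proposition $\sim_I$ is a congruence of the left semi-brace $B$, so the quotient $B/I$ carries a natural left semi-brace structure and the quotient map $\pi : B \longrightarrow B/I$ is a left semi-brace homomorphism by construction. Then $b \in \ker \pi$ iff $\pi(b) = 1_{\circ}$ iff $b \sim_I 1_{\circ}$ iff $\overline{1_{\circ}} \circ b = b \in I$, so $\ker \pi = I$.

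For the other direction, let $f : B \longrightarrow B_1$ be a left semi-brace homomorphism and set $I = \ker f$. I would verify each of the four conditions of Definition \ref{defideaal} in turn. That $(I, \circ)$ is normal in $(B, \circ)$ is immediate since $f$ is a group homomorphism for $\circ$. That $I$ is a subsemigroup of $(B,\cdot)$ uses only that $1_{\circ}$ is multiplicatively idempotent in $B_1$ (Lemma~\ref{basics}), giving $f(ab) = 1_{\circ} \cdot 1_{\circ} = 1_{\circ}$ for $a,b \in I$. For the condition on $G = 1_{\circ}B 1_{\circ}$, one notes that $f(G) \subseteq 1_{\circ}B_1 1_{\circ}$ because $f(1_{\circ}) = 1_{\circ}$; for the conjugation invariance, for $a \in I \cap G$ and $g \in G$, Lemma~\ref{S2isS1S}(1) applied in $B_1$ gives $f(g)\cdot 1_{\circ} \cdot f(g^{-1}) = f(g)f(g^{-1}) = f(1_{\circ}) = 1_{\circ}$, so $g a g^{-1} \in I$; and for closure under inverse in $G$, the element $f(a^{-1}) \in 1_{\circ}B_1 1_{\circ}$ satisfies $1_{\circ}\cdot f(a^{-1}) = f(a)f(a^{-1}) = 1_{\circ}$, forcing $f(a^{-1}) = 1_{\circ}$.

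The remaining ingredient is the compatibility of $f$ with the $\lambda$- and $\rho$-maps, namely $f(\lambda_a(b)) = \lambda_{f(a)}(f(b))$ and $f(\rho_b(a)) = \rho_{f(b)}(f(a))$, which both follow directly from the definitions and the fact that $f$ respects $\cdot$, $\circ$ and inverses in $(B,\circ)$. With this in hand, for $a \in B 1_{\circ}$ and $b \in I$ I reduce the question to $\lambda_{f(a)}(1_{\circ}) = 1_{\circ}$; since $f(a) \in B_1 1_{\circ}$, Lemma~\ref{basics} yields $\overline{f(a)} \in B_1 1_{\circ}$ as well, so $\overline{f(a)}\cdot 1_{\circ} = \overline{f(a)}$ and $\lambda_{f(a)}(1_{\circ}) = f(a)\circ \overline{f(a)} = 1_{\circ}$. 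The analogous computation for $\rho_{f(b)}(1_{\circ})$ with $b \in 1_{\circ}B$ uses $1_{\circ}\cdot f(b) = f(b)$. The main obstacle I anticipate is precisely this last bit: carefully tracking that the restrictions $a \in B 1_{\circ}$ and $b \in 1_{\circ}B$ placed in Definition \ref{defideaal} are exactly what is needed for the images under $f$ to allow the idempotent $1_{\circ}$ to be absorbed, even though $B_1$ itself is not assumed completely simple.
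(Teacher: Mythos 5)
Your proposal is correct and follows essentially the same route as the paper: the quotient map $B \to B/I$ (using the preceding proposition that $\sim_I$ is a congruence) realizes an ideal as a kernel, and the converse is the direct verification of the four conditions of Definition~\ref{defideaal}, with the key computations $f(\rho_b(x)) = \rho_{f(b)}(1_{\circ}) = 1_{\circ}$ and $f(\lambda_a(x)) = \lambda_{f(a)}(1_{\circ}) = 1_{\circ}$ resting on the absorption $1_{\circ}f(b) = f(b)$ for $b \in 1_{\circ}B$ and $\overline{f(a)}\,1_{\circ} = \overline{f(a)}$ for $a \in B1_{\circ}$, exactly as in the paper. You merely spell out the ``well-known elementary arguments'' for normality of $(\ker f,\circ)$ and of $(\ker f \cap G,\cdot)$ that the paper leaves implicit.
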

\begin{proof}Put $G = 1_{\circ}B1_{\circ}$. First we show that a kernel of a 
left semi-brace homomorphism
	$f:B \longrightarrow B_1$ is an ideal. 
	With well-known elementary arguments one shows that  $\ker f$ is a normal 
	subgroup of $(B_1,\circ)$
	and that $(\ker f,\cdot )$ is a subsemigroup of $(B,\cdot )$ such  that  $(\ker f \cap G, \cdot)$ 
	is a normal subgroup of $(G,\cdot )$.
	Let $b \in 1_{\circ}B$, then 
	\begin{align*}
	f(\rho_b(x)) &= f(\overline{ (\overline{x}b)} \circ b)
	=\overline{ \overline{f(x)} f(b)} \circ f(b) 
	=\overline{1_{\circ}f(b)} \circ f(b) 
	= \overline{f(b)} \circ f(b) = 1_{\circ}.
	\end{align*}
	Hence $\rho_{b}(\ker f) \subseteq \ker f$.
	Analogously, one can shows that
	$f(\lambda_a(x)) = 1_{\circ}$, for $a \in B 1_{\circ}$.
	Thus, $\lambda_{a}(\ker f) \subseteq \ker f$. So,  $\ker f$ is an ideal of $B$.
	
	On the other hand, let $I$ be an ideal of $B$. Define $f: B 
	\longrightarrow B/I$. Then $f: B \rightarrow B/I$ is a left  semi-brace 
	morphism with 
	$I = \ker f$.
\end{proof}
\begin{definition}
	Let $(B,\cdot,\circ)$ be a completely simple left semi-brace.
	 The socle of $B$ is 
	 $$ \textup{Soc}(B)= \left\lbrace x \in B \mid \lambda_x = 
	\lambda_{1_{\circ}} \textnormal{ and } \rho_x = \rho_{1_{\circ}} \right\rbrace.$$
\end{definition}

\begin{prop}\label{charactersocl}
	Let $(B,\cdot,\circ)$ be a completely simple left semi-brace such that the 
	associated $\rho$-map is an anti-homomorphism. Then, the following 
	propositions are 
	equivalent:
	\begin{enumerate}
		\item $x \in \textup{Soc}(B)$,
		\item for any $b \in B$, $\overline{x} \circ (1_{\circ}b) = 
		\overline{x}b$ and $x \circ (b1_{\circ}) = bx$.
	\end{enumerate}
	Moreover, $\textup{Soc}(B) \subseteq \textup{Soc}(1_{\circ}B1_{\circ})=\{ x\in  1_{\circ}B1_{\circ} \mid
	 x\circ y =xy =yx \text{ for all } y\in 1_{\circ}B1_{\circ}\}$.
\end{prop}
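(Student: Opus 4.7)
The equivalence $(1)\Leftrightarrow(2)$ is just rearrangement in the group $(B,\circ)$. Spelling out $\lambda_x = \lambda_{1_\circ}$, the statement reads $x\circ(\overline{x}b) = 1_\circ b$ for all $b\in B$; applying $\overline{x}\circ$ on the left turns this into $\overline{x}\circ(1_\circ b) = \overline{x}b$, the first half of (2). Similarly, unpacking $\rho_x = \rho_{1_\circ}$ gives $\overline{\overline{b}x}\circ x = \overline{\overline{b}\,1_\circ}$ for every $b$; inverting in $(B,\circ)$ and reindexing $b \leftrightarrow \overline{b}$ produces the second half of (2), namely $x\circ(b\,1_\circ) = bx$. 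Both manipulations reverse, so this direction of the proof is essentially free.

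For the inclusion $\textup{Soc}(B)\subseteq \textup{Soc}(1_\circ B 1_\circ)$, I would assume $x\in \textup{Soc}(B)$ and derive in turn that $x\in B1_\circ$, that $\overline{x}\cdot x = 1_\circ$ in $(B,\cdot)$, and that $x\in 1_\circ B$. First, applying $\lambda_x = \lambda_{1_\circ}$ at $b = 1_\circ$ (using $1_\circ\cdot 1_\circ = 1_\circ$ from Lemma~\ref{basics}) gives $x\circ(\overline{x}\,1_\circ) = 1_\circ$, forcing $\overline{x}\,1_\circ = \overline{x}$, i.e.\ $\overline{x}\in B1_\circ$; since $B1_\circ$ is a subgroup of $(B,\circ)$ by Lemma~\ref{basics}, also $x\in B1_\circ$. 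Second, substituting $b = \overline{x}$ into the second half of (2) gives $\overline{x}\cdot x = x\circ \overline{x} = 1_\circ$, the crucial semigroup identity linking the $\circ$-inverse of $x$ to $\cdot$-multiplication. Third, substituting $b = x$ into the first half of (2) now gives $\overline{x}\circ(1_\circ x) = \overline{x}\cdot x = 1_\circ$, so $1_\circ x = x$ and $x\in 1_\circ B$. Combining, $x\in 1_\circ B\cap B1_\circ = 1_\circ B 1_\circ = G$ (either because any such $x$ equals $1_\circ x 1_\circ$, or directly from the Rees matrix form in Corollary~\ref{rowcol}).

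Having placed $x$ in $G$, I would fix $y\in G$ and verify the two commutations. Since $y = y\,1_\circ$, the second half of (2) with $b=y$ immediately yields $x\circ y = yx$. For the other equality, the plan is to apply the first half of (2) with $b = xy$: note $xy\in G\subseteq 1_\circ B$, so $1_\circ(xy) = xy$, and using $\overline{x}\cdot x = 1_\circ$ together with $1_\circ y = y$ gives $\overline{x}\cdot(xy) = (\overline{x}x)y = y$. The identity then reads $\overline{x}\circ(xy) = y$, whence $xy = x\circ y$. Combining, $x\circ y = xy = yx$ for all $y\in G$, so $x\in \textup{Soc}(G)$. The main obstacle is the middle stage: forcing $x$ into the group component $G$ requires playing both halves of (2) off each other and, in particular, extracting the semigroup identity $\overline{x}\cdot x = 1_\circ$ (rather than just the tautological $\circ$-identity $x\circ\overline{x} = 1_\circ$). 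Once $x$ is known to lie in $G$, the commutation relations essentially read themselves off.
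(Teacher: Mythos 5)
Your proof is correct and follows essentially the same route as the paper: rearranging in $(B,\circ)$ gives the equivalence of (1) and (2), and evaluating the socle conditions at $1_{\circ}$ forces $x \in B1_{\circ} \cap 1_{\circ}B = 1_{\circ}B1_{\circ}$. Your only deviations are cosmetic — you reach $x \in 1_{\circ}B$ via the derived identity $\overline{x}\cdot x = 1_{\circ}$ rather than directly from $\rho_x(1_{\circ}) = \rho_{1_{\circ}}(1_{\circ})$, and you explicitly verify the commutation relations $x\circ y = xy = yx$ on $1_{\circ}B1_{\circ}$, a step the paper leaves implicit.
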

\begin{proof}
	First, suppose $x \in \textup{Soc}(B)$. Thus, for any $b\in B$, 
	$x\circ (\overline{x}b) =\lambda_x(b) = \lambda_{1_{\circ}}(b) = 1_{\circ}b$,
	and thus
	$\overline{x} b = \overline{x} \circ (1_{\circ}b)$.
	Also,
	$\overline{bx}\circ x = \rho_x(\overline{b}) = \rho_{1_{\circ}}(\overline{b}) = 
	\overline{b1_{\circ}}$,
	and thus
	$\overline{bx} = \overline{b1_{\circ}} \circ \overline{x}$. 
	So
	$bx = x \circ (b1_{\circ})$.
	Thus (1) implies (2). Reversing the 
	reasoning, the converse is easily verified.
	
	Let $x \in \textup{Soc}(B)$. Then, 
	$ 1_{\circ} = \lambda_{1_{\circ}}(1_{\circ}) = \lambda_x(1_{\circ}) = x 
	\circ (\overline{x}1_{\circ})$. Thus, $\overline{x} = \overline{x}1_{\circ}$. This shows 
	that $\overline{x} $ and thus also $x\in B1_{\circ}$,
	as $B1_{\circ}$ is a left subsemi-brace by Theorem~\ref{characttheorem}. 
	Furthermore, $ 1_{\circ} = \rho_{1_{\circ}}(1_{\circ}) = \rho_x(1_{\circ}) 
	= \overline{1_{\circ}x} 
	\circ x$. Thus, $\overline{x} = \overline{1_{\circ}x}$. As $1_{\circ}B$ is 
	a subsemi-brace by Corollary \ref{rowcol}, we thus get that, $x \in 
	1_{\circ}B$. It 
	follows that $x \in 1_{\circ}B1_{\circ}$ and thus $x \in 
	\textup{Soc}(1_{\circ}B1_{\circ})$. 
\end{proof}

\begin{prop}
	Let $(B,\cdot, \circ)$ be a completely simple left semi-brace such that the 
	associated $\rho$-map is an anti-homomorphism. Then, $\textup{Soc}(B)$ is 
	an ideal of $B$.
\end{prop}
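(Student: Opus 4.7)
The plan is to verify each of the four conditions in Definition~\ref{defideaal} directly, relying on the characterisation of socle elements from Proposition~\ref{charactersocl}.

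I would first establish condition $(2)$, namely that $(\textup{Soc}(B),\circ)$ is a normal subgroup of $(B,\circ)$. Although $\lambda$ takes values in $\textup{End}(B,\cdot)$, whose monoid identity is $\textup{id}_B\neq\lambda_{1_\circ}$ in general, the image $\lambda(B)$ is the image of a group under a semigroup homomorphism (Lemma~\ref{lambdaidempot}) and is therefore itself a group, with identity $\lambda_{1_\circ}$. Hence the set $\{x\in B:\lambda_x=\lambda_{1_\circ}\}$ is the preimage of the identity of this group and so it is a normal subgroup of $(B,\circ)$. The same argument applied to $\rho$, viewed as a homomorphism into the opposite monoid of $\textup{Map}(B,B)$ (using the anti-homomorphism hypothesis), shows that $\{x:\rho_x=\rho_{1_\circ}\}$ is normal in $(B,\circ)$. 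The socle, being the intersection of these two sets, is then normal in $(B,\circ)$.

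Next I would handle condition $(1)$: $(\textup{Soc}(B),\cdot)$ is a normal subgroup of $(G,\cdot)$ with $G=1_\circ B1_\circ$. Since Proposition~\ref{charactersocl} places $\textup{Soc}(B)$ inside the centre of $(G,\cdot)$, only closure under $\cdot$ and under $\cdot$-inversion need be checked. For closure, the inclusion $\textup{Soc}(B)\subseteq\textup{Soc}(G)$ gives $xy=x\circ y$ for $x,y\in\textup{Soc}(B)$, and then
\[
\lambda_{xy}=\lambda_{x\circ y}=\lambda_x\lambda_y=\lambda_{1_\circ}\lambda_{1_\circ}=\lambda_{1_\circ},\qquad\rho_{xy}=\rho_{x\circ y}=\rho_y\rho_x=\rho_{1_\circ},
\]
using $1_\circ\circ 1_\circ=1_\circ$. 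For $\cdot$-inverses, $x\in\textup{Soc}(G)$ forces $x\circ x^{-1}=xx^{-1}=1_\circ$, so the $\cdot$-inverse of $x$ coincides with $\overline{x}$, which belongs to $\textup{Soc}(B)$ by the first paragraph.

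Conditions $(3)$ and $(4)$ would then follow from a uniform calculation that rewrites $\rho$- and $\lambda$-images of socle elements as $\circ$-conjugates, using Proposition~\ref{charactersocl}$(2)$. For $b\in 1_\circ B$ (so $1_\circ b=b$) and $x\in\textup{Soc}(B)$, the identity $\overline{x}b=\overline{x}\circ(1_\circ b)=\overline{x}\circ b$ yields
\[
\rho_b(x)=\overline{\overline{x}b}\circ b=\overline{\overline{x}\circ b}\circ b=\overline{b}\circ x\circ b.
\]
For $a\in B1_\circ$ and $x\in\textup{Soc}(B)$, Lemma~\ref{basics} gives $\overline{a}\in B1_\circ$, hence $\overline{a}1_\circ=\overline{a}$, and Proposition~\ref{charactersocl}$(2)$ yields $\overline{a}x=x\circ(\overline{a}1_\circ)=x\circ\overline{a}$, so
\[
\lambda_a(x)=a\circ(\overline{a}x)=a\circ x\circ\overline{a}.
\]
Both conjugates lie in $\textup{Soc}(B)$ by the normality established in the first step. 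The main subtlety of the argument is precisely that first step: one has to recognise that the images of $\lambda$ and $\rho$, though sitting inside monoids, are themselves groups with non-trivial identities $\lambda_{1_\circ}$ and $\rho_{1_\circ}$, so that ``value at $1_\circ$'' genuinely behaves as a kernel. Once this is observed, the remainder of the proof is a clean substitution using the characterisation from Proposition~\ref{charactersocl}.
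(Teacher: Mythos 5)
Your proposal is correct and follows essentially the same route as the paper: normality in $(B,\circ)$ from the (anti-)homomorphism properties of $\lambda$ and $\rho$, closure under $\cdot$ via the inclusion $\textup{Soc}(B)\subseteq\textup{Soc}(1_{\circ}B1_{\circ})$ so that $xy=x\circ y$, and conditions $(3)$ and $(4)$ by rewriting $\rho_b(x)=\overline{b}\circ x\circ b$ and $\lambda_a(x)=a\circ x\circ\overline{a}$ exactly as in the paper. Your kernel-of-a-group-homomorphism phrasing of the first step and your explicit check of $\cdot$-inverses and centrality for condition $(1)$ are just cleaner packagings of the same computations.
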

\begin{proof}
	Let $x,y \in \textup{Soc}(B)$ and let $z\in B$.
	Because $\lambda$ is a homomorphism by Lemma~\ref{lambdaidempot}
	 $$
	\lambda_{\overline{z}\circ x\circ y \circ z} 
	= \lambda_{\overline{z}} \circ \lambda_{x} \circ \lambda_{y}
	 \circ \lambda_{z} =\lambda_{\overline{z}} \circ \lambda_{1_{\circ}} \circ 
	\lambda_{1_{\circ}}\circ \lambda_{z} = \lambda_{\overline{z}}\circ \lambda_{1_{\circ}} \circ  \lambda_{z}
	= \lambda_{1_{\circ}}.
	$$
	  Similarly, since by assumption, $\rho$ is an anti-homomorphism, 
	$\rho_{\overline{z} \circ x \circ y \circ z} = \rho_{y} \circ \rho_x = \rho_{1_{\circ}} \circ \rho_{1_{\circ}} = \rho_{1_{\circ}}
	$.
	Moreover,
         $ \rho_{1_{\circ}} = \rho_{x \circ \overline{x}} = \rho_{\overline{x}} 
	\circ 
	\rho_x = \rho_{\overline{x}} \circ \rho_{1_{\circ}} = 
	\rho_{\overline{x}}$
	and, similarly, $\lambda_{\overline{x}} =\lambda_{1_{\circ}}$.
	It follows that 
	$\textup{Soc}(B)$ is a normal subgroup of $(B,\circ)$.
	
	Let $x,y \in \textup{Soc}(B)$ and $b \in B$. By Proposition \ref{charactersocl}, $\textup{Soc}(B) 
	\subseteq \textup{Soc}(1_{\circ}B1_{\circ})$ and thus 
	$
	xyb = x \circ (1_{\circ}yb) = x \circ (yb) = x \circ y \circ (1_{\circ}b) = (xy) \circ (1_{\circ}b)
	$
	and
        $ 
	bxy = byx = x \circ (by1_{\circ}) = x \circ (by) = x \circ y \circ (b1_{\circ}) = (xy) 
	\circ (b1_{\circ}).
	$
	Thus, again by Proposition \ref{charactersocl}, $xy\in \textup{Soc}(B)$ and hence
	$\textup{Soc}(B)$  is a subsemi-group of $(B,\cdot )$.
	
	Let $x \in \textup{Soc}(B)$.  By Proposition 
	\ref{charactersocl}, and as $I$ is a normal subgroup of $(B,\circ)$, 
	if $b \in 1_{\circ}B$ then
	$
	\rho_b(x) = \overline{(\overline{x}b)} \circ b = \overline{ \overline{x} 
	\circ (1_{\circ}b)} \circ b = \overline{b} \circ x \circ b \in I
	$;
	and if 
        $b \in B1_{\circ}$ then
	$ \lambda_b(x) = b \circ (\overline{b} x) = b \circ x \circ 
	(\overline{b}1_{\circ}) = b \circ x \circ \overline{b} \in I$.
	It follows that $\textup{Soc}(B)$ is an ideal of $B$.
\end{proof}

 Proposition~\ref{charactersocl} shows that for a skew left brace $(B,\cdot , 
\circ )$ (Definition \ref{skewbrace}) corresponds
with the definition given by Guarnieri and Vendramin in 
\cite{guarnieri2017skew}:
$$ \textup{Soc}(B) = \{ x\in  B \mid
x\circ y =xy =yx \text{ for all } y\in B\}.$$

	
	\section{Solutions to the Yang-Baxter equation and their structure monoids}
	
	 In this section we first show a completely simple left semi-brace $B$, 
	 with 
	associated $\rho$-map an involution, gives rise to  a set-theoretic solution 
	of 
	the Yang-Baxter equation. We next introduce the structure monoid associated 
	to
	this solution, denoted $M(B)=M(r_B)$. In case $B$ is a left or right zero 
	left 
	semi-brace we prove that $M(B)$ has a group of fractions.
	
	The proof of the following theorem is in essence the proof of Theorem 9 
	given 
	by Catino, Colazzo and Stefanelli in \cite{catino2017semi}.
	For completness' sake and to show where we use that $\rho$ is an 
	anti-homomorphism we include a proof.
	Without specific reference we will often make use of the following fact, 
	for $a$ and $b$ in a left semi-brace $(B,\cdot , \circ )$:
	$$a \circ b = a \circ \left( \overline{a} b \right) \circ \overline{\left( 
	\overline{a} b \right)} \circ b =
	\lambda_a(b) \circ \rho_{b}(a).$$
	
	\begin{theorem}
		Let $(B,\cdot,\circ)$ be a left semi-brace such that $\rho: (B,\circ) 
		\longrightarrow \textup{Map}(B,B)$ is an anti-homomorphism. Then, the 
		mapping $r_{B}: B \times B \longrightarrow B \times B$ given by $r(x,y) 
		= 
		(\lambda_{x}(y), \rho_{y}(x))$ is a set-theoretic solution of the 
		Yang-Baxter equation, called the solution associated to $B$. 
		Furthermore, 
		if $(B,\cdot,\circ)$ is a left semi-brace such that 
		$(1_{\circ}B1_{\circ},\cdot,\circ)$ is a left brace, then $r_B^3=r_B$. 
		Moreover, if $(B,\cdot,\circ)$ is a left semi-brace such that 
		$1_{\circ}B1_{\circ} = \left\lbrace 1_{\circ} 
		\right\rbrace$, then $r_B^2 = r_B$.  
	\end{theorem}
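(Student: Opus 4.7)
The central tool is the identity
\[ x \circ y = \lambda_x(y) \circ \rho_y(x) \qquad \text{for all } x, y \in B, \]
noted in the paragraph preceding the theorem and immediate from the definitions, since $\lambda_x(y) \circ \rho_y(x) = x \circ (\overline{x}y) \circ \overline{\overline{x}y} \circ y = x \circ y$. To verify that $r_B$ satisfies the Yang-Baxter equation I would apply both $(r \times \textup{id})(\textup{id} \times r)(r \times \textup{id})$ and $(\textup{id} \times r)(r \times \textup{id})(\textup{id} \times r)$ to an arbitrary triple $(x,y,z) \in B^3$ and compare the three resulting coordinates. Using the displayed identity together with Lemma \ref{lambdaidempot} (that $\lambda$ is a homomorphism), the first coordinates on both sides reduce to $\lambda_{x \circ y}(z)$; by a symmetric argument relying on $\rho$ being an anti-homomorphism, the third coordinates both reduce to $\rho_{y \circ z}(x)$. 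The middle coordinate yields the nontrivial compatibility
\[ \rho_{\lambda_{\rho_y(x)}(z)}(\lambda_x(y)) = \lambda_{\rho_{\lambda_y(z)}(x)}(\rho_z(y)), \]
which is the main obstacle. I would prove it by direct expansion using the semi-brace identity and the definitions of $\lambda$ and $\rho$, along the lines of the computation in Theorem 9 of \cite{catino2017semi}, verifying that left cancellativity is nowhere invoked.

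For $r_B^3 = r_B$ under the assumption that $(1_\circ B 1_\circ, \cdot, \circ)$ is a left brace, I would use the matrix description $B = \mathcal{M}(G, I, J, \mathcal{I}_{|J|,|I|})$ from Corollary \ref{rowcol}, with $G = 1_\circ B 1_\circ$. By Lemma \ref{rhomap1}, after one application of $r_B$ the first coordinate lies in $1_\circ B$ and the second in $B 1_\circ$, and further iterations remain in $1_\circ B \times B 1_\circ$. Writing the coordinates in matrix form, the $I$- and $J$-labels stabilize after the first application (by an argument parallel to the computation for the $r_B^2 = r_B$ case below), while the $G$-components evolve according to the YBE solution associated with $G$. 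Since $G$ is a left brace, that solution is involutive, so two further applications of $r_B$ restore the $G$-components to their values after the first application while leaving the stabilized labels intact, yielding $r_B^3 = r_B$.

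For the final statement, $r_B^2 = r_B$ when $1_\circ B 1_\circ = \{1_\circ\}$, every element of $B$ is a multiplicative idempotent, since $B = \mathcal{M}(\{1\}, I, J, \mathcal{I}_{|J|,|I|})$ yields $(1, i, j)(1, k, l) = (1, i, l)$. By Lemma \ref{rhomap1} we have $\lambda_x(y) = (1, 1, j_0) \in 1_\circ B$ and $\rho_y(x) = (1, i_0, 1) \in B 1_\circ$ for some $i_0, j_0$, and since $(1_\circ B, \circ)$ is a group by Corollary \ref{rowcol}, $\overline{\lambda_x(y)} = (1, 1, j_0') \in 1_\circ B$, so $\overline{\lambda_x(y)} \cdot \rho_y(x) = (1, 1, 1) = 1_\circ$. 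Therefore $\lambda_{\lambda_x(y)}(\rho_y(x)) = \lambda_x(y) \circ 1_\circ = \lambda_x(y)$, and symmetrically $\rho_{\rho_y(x)}(\lambda_x(y)) = \rho_y(x)$, whence $r_B^2(x,y) = (\lambda_x(y), \rho_y(x)) = r_B(x,y)$.
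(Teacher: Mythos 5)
Your verification of the Yang--Baxter equation stalls exactly where you say it does: you isolate the middle-coordinate identity $\rho_{\lambda_{\rho_y(x)}(z)}(\lambda_x(y)) = \lambda_{\rho_{\lambda_y(z)}(x)}(\rho_z(y))$ as ``the main obstacle'' and then defer it to an unexecuted ``direct expansion.'' That is a gap --- but one you can close with the tool you already named. Since $a\circ b=\lambda_a(b)\circ\rho_b(a)$, the $\circ$-product of the three output coordinates on \emph{each} side telescopes to $x\circ y\circ z$; having shown the first coordinates agree (via $\lambda$ a homomorphism) and the third coordinates agree (via $\rho$ an anti-homomorphism), the middle coordinates must agree by left and right cancellation in the group $(B,\circ)$. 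This is exactly how the paper finishes, and no expansion is needed. You should state this rather than appeal to the computation in \cite{catino2017semi}.

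The serious gap is in $r_B^3=r_B$. Your argument rests on two unproven assertions: (i) that after one application of $r_B$ the $I$- and $J$-labels are fixed by all further applications, and (ii) that the $G$-components of the iterates evolve according to the solution $r_G$ of the left brace $G$, so that involutivity of $r_G$ finishes the job. Neither is justified: $\lambda_{(g,i,j)}$ and $\rho_{(h,k,l)}$ mix the group and index components, and the ``parallel argument'' you invoke from the $r_B^2=r_B$ case uses $G=\{1_{\circ}\}$ in an essential way. The actual content of the paper's proof is a closed-form computation showing
$r_B^2\left((g,i,j),(h,k,l)\right)=\left((g,i,j)\circ(1,f,l),\ \overline{(1,f,l)}\circ(h,k,l)\right)$,
where $f$ is the $I$-component of $\overline{(g,i,j)}$; the step that makes this work uses that $(1_{\circ}B1_{\circ},\cdot)$ is \emph{abelian} (the left brace hypothesis), not merely that $r_G$ is involutive. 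Since the image of $r_B$ lies in $1_{\circ}B\times B1_{\circ}$, one there has $(1,f,l)=(1,1,1)$, so $r_B^2$ is the identity on the image of $r_B$ and hence $r_B^3=r_B$. Your decoupling heuristic is consistent with this but does not substitute for it. By contrast, your final argument for $r_B^2=r_B$ when $1_{\circ}B1_{\circ}=\{1_{\circ}\}$ is correct and in fact more direct than the paper's: showing $\overline{\lambda_x(y)}\cdot\rho_y(x)=1_{\circ}$ and reading off both coordinates of $r_B^2(x,y)$ is a clean shortcut (it does require noting that $\overline{\lambda_x(y)}\in 1_{\circ}B$ because $1_{\circ}B$ is a subsemi-brace, which you correctly cite).
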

	\begin{proof}
	It is easily verified that $(B,r_B)$ is a solution of the Yang-Baxter 
	equation if and only if, for all $x,y,z \in B$,
		\begin{eqnarray*}  \lefteqn{\left( \lambda_x \lambda_y(z), 
		\lambda_{\rho_{\lambda_y(z)}(x)}(\rho_z(y)), 
		\rho_{\rho_{z}(y)}(\rho_{\lambda_{y}(z)}(x)) \right)}\\
		& =& 
		\left(\lambda_{\lambda_x(y)}( \lambda_{\rho_y(x)}(z)),
		\rho_{\lambda_{\rho_y(x)}(z)}(\lambda_{x}(y)), \rho_z \rho_y(x) 
		\right). \\
		\end{eqnarray*}
		Denote the first triple by $(s_1,s_2,s_3)$ and the second by 
		$(t_1,t_2,t_3)$. 
		Then,
		$$ 
		t_1 \circ t_2 \circ t_3 = \lambda_x(y) \circ \lambda_{\rho_y(x)}(z) 
		\circ \rho_z \rho_y(x) = 
		\lambda_x(y) \circ \rho_y(x) \circ z = x \circ y \circ z.
		$$
		and
		$$ 
		s_1 \circ s_2 \circ s_3 = \lambda_x \lambda_y(x) \circ 
		\rho_{\lambda_{y}(z)}(x) \circ \rho_z(y) = 
		x \circ \lambda_y(z) \circ \rho_z(y) = x \circ y \circ z. 
		$$
		Thus, $ t_1 \circ t_2 \circ t_3 = s_1 \circ s_2 \circ s_3$.
		We will now show that $t_1 = s_1$ and $t_3 = s_3$. As $\lambda: (B, 
		\circ) 
		\longrightarrow \textup{End}(B,\cdot)$ is a homomorphism by 
		Lemma~\ref{lambdaidempot}, 
		it follows that 
		$$ t_1 = \lambda_{\lambda_x(y) \circ \rho_{y}(x)}(z) = \lambda_{x \circ 
			y}(z) = \lambda_x \lambda_{y}(z) = s_1.$$
		Furthermore, as, by assumption, $\rho$ is an anti-homomorphism, 
		$$ 
		s_3 = \rho_{\lambda_y(z)\circ \rho_z(y)}(x) = \rho_{y \circ z}(x) = 
		\rho_z \rho_y (x) = t_3.
		$$
		Because $(B,\circ)$ is a group, it thus also  follows that $s_2 = t_2$.
		
		Let $(g,i,j), (h,k,l) \in B$. Clearly, 
		 $$r_B((g,i,j),(h,k,l)) = \left( 
		\lambda_{(g,i,j)}(h,k,l),\rho_{(h,k,l)}(g,i,j)\right) .$$ Write 
		$r_B^2((g,i,j),(h,k,l)) = (u,v)$. Then 
		\begin{eqnarray*}
			u&=&\lambda_{\lambda_{(g,i,j)}(h,k,l)}\left( 
			\rho_{(h,k,l)}(g,i,j)\right)\\ 
			&=& 
			\lambda_{(g,i,j)}(h,k,l) \circ \left( \overline{ 
			\lambda_{(g,i,j)}(h,k,l)} 
			\rho_{(h,k,l)}(g,i,j)\right) \\
			&=& (g,i,j) \circ \left(\overline{(g,i,j)}(h,k,l)\right)\\
			&& \circ 
			\left(\left( 
			\overline{\left( \overline{(g,i,j)}(h,k,l)\right)} \circ 
			\overline{(g,i,j)}\right) \left( \overline{ 
			\overline{(g,i,j)}(h,k,l)} 
			\circ (h,k,l) \right) \right).
		\end{eqnarray*}
		Put $t =\overline{ \overline{(g,i,j)}(h,k,l)}$ and $(g_2,f,q) = 
		\overline{(g,i,j)}$. 
		Hence,
		\begin{eqnarray*}
			u&=&(g,i,j) \circ \overline{t} \circ \left(\left( t \circ 
			\overline{(g,i,j)}\right) \left( t \circ (h,k,l)\right)\right)\\
			&=& (g,i,j) \circ \overline{t} \circ \left(\left( t \circ 
			\overline{(g,i,j)}\right) \left(t \circ (1,k,1)\right) 
			\lambda_t(h,1,l)\right)\\
			&=& (g,i,j) \circ \overline{t} \circ \left( \left( t \circ 
			\overline{(g,i,j)} \right) \left( t \circ (1,f,1)\right) 
			\lambda_{t}(h,1,l)\right),
		\end{eqnarray*}
		where the last equality holds by Proposition \ref{charactanti}. Then
		\begin{eqnarray*}
			u&=& (g,i,j) \circ \overline{t} \circ \left( \left( t \circ 
			\overline{(g,i,j)}\right) \left( t \circ (h,f,l)\right)\right)\\
			&=& (g,i,j) \circ \overline{t} \circ \left(\left( t \circ 
			\overline{(g,i,j)}\right) \left( t \circ (g_2,f,q) (h,k,l) 
			(g_2^{-1},1,l)\right)\right)\\
			&=& (g,i,j) \circ \overline{t} \circ \left( \left( t \circ 
			(g,i,j)\right) \left( t \circ (\overline{t} (g,i,j)(g_2^{-1}, 
			1,l))\right)\right),
		\end{eqnarray*}
		where the second equality holds as $1_{\circ} B1_{\circ}$ is a left 
		brace 
		and 
		thus $(1_{\circ}B1_{\circ},\cdot)$ is abelian. So,
		\begin{eqnarray*}
			u&=& (g,i,j) \circ \overline{t} \circ t \circ \left( 
			\overline{(g,i,j)} 
			(g_2^{-1},1,l)\right)= (g,i,j) \circ (1,f,l).
		\end{eqnarray*}
		Note that if $a,b \in B$, then $\lambda_{a}(b) \circ \rho_{b}(a) = a 
		\circ \left( \overline{a}b\right) \circ 
		\left(\overline{\overline{a}b} \right)\circ b = a \circ b$. Hence, if 
		$r(a,b) = (w,x)$ it follows that $w \circ x =  a \circ b$. Then, 
		$(g,i,j) \circ (h,k,l) = u\circ v$. By the previous 
		calculations, it 
		follows that $u = (g,i,j) \circ (1,f,l)$. Thus, $v = \overline{(1,f,l)} 
		\circ 
		(h,k,l)$. Hence, 
		$$r_B^2\left( (g,i,j), (h,k,l) \right) = \left( (g,i,j) \circ (1,f,l), 
		\overline{(1,f,l)}\circ (h,k,l)\right).$$
		Note that $\lambda_{(g,i,j)}(h,k,l) \in 
		1_{\circ}B$ by Lemma \ref{behaviorlambda} and $\rho_{(h,k,l)}(g,i,j) 
		\in 
		B1_{\circ}$ by Lemma \ref{rhomap1}.Thus, $\lambda_{(g,i,j)}(h,k,l) = 
		(\alpha,1,p)$ and $\rho_{(h,k,l)}(g,i,j) = (\beta,z,1)$ for some 
		$\alpha, \beta, p, z$. Hence, applying the previous 
		formula to 
		$$r_B\left( (g,i,j), (h,k,l)\right) = \left( \lambda_{(g,i,j)}(h,k,l) , 
		\rho_{(h,k,l)}(g,i,j)\right),$$ we get that
		\begin{eqnarray*}
			r_B^3\left( (g,i,j), (h,k,l) \right) &=& r_B^2 \circ r\left( 
			(g,i,j), 
			(h,k,l) \right)\\ &=&r_B^2 \left( (\alpha, 1,p), (\beta, z,1) 
			\right)\\ &=& \left( (\alpha,1,p) \circ (1,1,1), \overline{(1,1,1)} 
			\circ (\beta, z, 1)\right)\\  &=& \left(\lambda_{(g,i,j)}(h,k,l), 
			\rho_{(h,k,l)}(g,i,j)\right).
		\end{eqnarray*}
		This shows that $r_B^3 = r_B$. 
	
	Suppose that $1_{\circ}B1_{\circ}$ is trivial. Then, in the notation of 
	above, we find 
	that
	\begin{eqnarray*}
	(1,i,j) \circ (1,f,l) &=& (1,i,j) \circ \left((1,f,q)(1,k,l)\right) \\
	&=& (1,1,1) \lambda_{(1,i,j)}(1,k,l) = \lambda_{(1,i,j)}(1,k,l).
	\end{eqnarray*}
	Moreover,
	\begin{eqnarray*}
	\rho_{(1,k,l)}(1,i,j) &=& \overline{ \overline{(1,1,j)} (1,k,l)} \circ 
	(1,k,l)\\ &=& \overline{ (1,f,q)(1,k,l)} \circ (1,k,l) \\ &=& \overline{ 
	(1,f,l)} \circ (1,k,l).
	\end{eqnarray*}
	Hence, with the above formula, it is clear that $r_B^2 = r_B$.
	\end{proof}

	\begin{definition}
		Let $r: X \times X \longrightarrow X \times X$ be a solution of the 
		Yang-Baxter equation. The monoid $$ M(r) := \left< x \in X 
		\mid 
		xy = uv \textnormal{ if } r(x,y) = (u,v) \right>$$
		is called the structure monoid of $r$. The multiplication in $M(r)$ 
		will be denoted by $*$.
	\end{definition}
	
	{\rm 	Let $(B,\cdot,\circ)$ be a left semi-brace such that its associated 
		$\rho$-map is an anti-homomorphism and let  $r_B$ be  its associated 
		solution. 
		Then, the structure monoid of $B$ is defined as $M(B)= M(r_B)$.
		Its identity element will be denoted by $1_{M(B)}$.
	To avoid confusion, we denote the product of $x$ and $y$ in
	$M(B)$ 
	as $x*y$.
	
	
	We first describe the monoid $M(B)$ in case the left semi-brace is a right 
	or 
	left zero left semi-brace.
	This will turn out to be very useful to describe the general case. Recall, 
	from 
	Theorem \ref{Colazzochar}, that 
	for a right zero left semi-brace $(E,\cdot,\circ)$ the semi-group 
	$(E,\cdot) 
	\cong \mathcal{M}\left(\left\lbrace 1 \right\rbrace, 
	1,J,\mathcal{I}_{|J|,1}\right)$.
	}

	\begin{theorem}
	 	Let $(E,\cdot,\circ)$ be a right zero  left semi-brace. Then, $$ 
		M(E) \cong \left(E \times \mathbb{N}\right)^1 =\left\lbrace (f,k) 
		\mid f \in E, k\geq 1 \textnormal{ a 
			positive integer}\right\rbrace \cup \left\lbrace 
		(1_{\circ},-1) \right\rbrace,$$
		where $(1_{\circ},-1)$ is the identity element and for any $(f,k),(g,l) 
		\in 
		\left( E 
		\times \mathbb{N}\right)^1$ the product $(f,k)(g,l)$ is defined as $(f 
		\circ g, k + l +1)$. In particular, $M(E)$ is contained in a 
		group. More specifically, this group is $E \times \mathbb{Z}$.
		
			Simiarly, if $(E,\cdot,\circ)$ be a left zero left semi-brace then, $$ 
		M(E) \cong \left(\mathbb{N} \times E \right)^1 = \left\lbrace 
		(k,f) \mid f \in E, k\geq 1 \textnormal{ a positive 
		integer}\right\rbrace.$$
		 In particular, $M(E)$ is contained in a group.

	\end{theorem}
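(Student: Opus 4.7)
The plan is to first compute the solution $r_E$ explicitly, exploiting the right zero property $\overline{x}\cdot y = y$. This immediately gives $\lambda_x(y) = x\circ(\overline{x}y) = x\circ y$ and $\rho_y(x) = \overline{(\overline{x}y)}\circ y = \overline{y}\circ y = 1_\circ$. Thus the defining relations of $M(E)$ collapse to the very simple family
\[
 x * y = (x \circ y) * 1_\circ \qquad \text{for all } x,y \in E.
\]
In particular, setting $x = 1_\circ$ yields the commutation $1_\circ * y = y * 1_\circ$, so that $1_\circ$ can be freely shuffled past any generator.

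Using this, I would next establish existence of a normal form. A straightforward induction on $n$ shows that every word $x_1 * x_2 * \cdots * x_n$ in $M(E)$ equals $(x_1 \circ x_2 \circ \cdots \circ x_n) * 1_\circ^{*(n-1)}$: in the inductive step, one pushes $x_n$ past the trailing copies of $1_\circ$ using the commutation relation above and then applies the defining relation once. Hence every element of $M(E)$ admits a presentation of the form $g * 1_\circ^{*m}$ with $g \in E$ and $m \geq 0$.

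The main obstacle will be establishing \emph{uniqueness} of this normal form, and my plan is to handle it by constructing the candidate monoid and morphism explicitly. Define $N = (E \times \mathbb{N})^1$ with the multiplication stated in the theorem; a routine direct check using associativity of $\circ$ and the additivity of the second coordinate confirms that $N$ is a monoid with identity $(1_\circ, -1)$. By the universal property of $M(E)$, the assignment $x \mapsto (x, k_0)$ on generators (for an appropriate fixed constant $k_0$ chosen so that the image of a defining relation is an identity in $N$) extends to a well-defined monoid homomorphism $\varphi: M(E) \to N$. Surjectivity of $\varphi$ follows directly from the product law of $N$, while injectivity is obtained by exhibiting a set-theoretic inverse on normal forms and noting that the existence result above already guarantees every element of $M(E)$ meets such a form. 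This yields the asserted isomorphism $M(E) \cong (E \times \mathbb{N})^1$. The embedding into a group is then immediate: $N$ sits inside $E \times \mathbb{Z}$ equipped with the same product law (which is now a group operation because both $(E,\circ)$ and $(\mathbb{Z}, +)$ are groups).

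For the left zero case, a mirror-image computation gives $\lambda_x(y) = x\circ(\overline{x}y) = x\circ\overline{x} = 1_\circ$ and $\rho_y(x) = \overline{(\overline{x}y)}\circ y = x\circ y$. The defining relation becomes $x * y = 1_\circ * (x \circ y)$, whence an entirely symmetric argument produces the normal form $1_\circ^{*m} * g$ and the isomorphism $M(E) \cong (\mathbb{N} \times E)^1$, which likewise embeds in a group. The sole nontrivial step in both cases is the uniqueness argument, and I expect no further conceptual difficulties beyond carefully matching the indexing conventions in the statement of the target monoid.
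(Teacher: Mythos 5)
Your proposal is correct and follows essentially the same route as the paper: compute $\lambda_x(y)=x\circ y$ and $\rho_y(x)=1_{\circ}$ (respectively the mirror images in the left zero case), reduce every word to the normal form $(x_1\circ\dotsb\circ x_n)*1_{\circ}^{*(n-1)}$ by induction, and exhibit the explicit isomorphism onto $\left(E\times\mathbb{N}\right)^1\subseteq E\times\mathbb{Z}$. Your variant of defining $\varphi$ on generators via the universal property (the relations are length-homogeneous, so any constant second coordinate works) and then deducing uniqueness of the normal form from injectivity on normal forms is, if anything, slightly cleaner than the paper's direct definition of $\varphi$ on normal forms; just be aware that the second coordinate must be word-length minus one for $\varphi$ to respect the product $(f,k)(g,l)=(f\circ g,k+l+1)$, which is the indexing subtlety you already flagged.
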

	\begin{proof}
	         We prove the first part of the statement; the second part is 
	         proven analogously.
	        So assume $(E,\cdot,\circ)$ be a right zero  left semi-brace.
	        
		Every element $f \in E$ is an idempotent in $(E,\cdot)$. Hence, by 
		Lemma 
		\ref{behaviorlambda}, it follows that $r(e,f) = (e\circ f, 1).$
		Thus, in $M(E)$ we have that $e*f = (e \circ f)*1_{\circ}$. Because 
		$r(e,f) = \left(\lambda_{e}(f),\rho_{f}(e)\right) = (e\circ f, 
		1_{\circ})$, if $r(e,f) = (u,v)$, then $e \circ f = u \circ v$.  Hence, 
		if $f_1*\dotsb*f_n = y_1*\dotsb*y_n$, then $f_1 \circ \dotsb \circ f_n 
		= 
		y_1 
		\circ \dotsb \circ y_n$. Further,
		inductively applying $r$ on the last two non-trivial factors (i.e. 
		factors 
		which are not $1$), we get that
		\begin{align*}
		f_1*f_2* \dotsb *f_n &= f_1* f_2* \dotsb * (f_{n-1} \circ f_n) * 1\\ &= 
		\dotsb 
		\\&= 
		(f_1 \circ \dotsb \circ f_n)* \underbrace{1* \dotsb *1}_{n-1 
		\textnormal{ 
		times}}.
		\end{align*}
		So every non-trivial element can be uniquely written as $g*1^n$ for 
		some $g \in E$ 
		and $n\geq 0$.
		Define 
		the map $\varphi: \mathcal{M}(E) \longrightarrow \left(E \times 
		\mathbb{N}\right)^1\subset E \times \left< (1_{\circ},0)\right> 
		\subseteq E \times \mathbb{Z}$, by $\varphi(g.1^n) = 
		(g,n+1)$ and $\varphi(e) = (1_{\circ},-1)$. Clearly, $\varphi$ is a 
		monoid 
		isomorphism. The remaining claim follows immediately.
	\end{proof}

%
%

	\begin{lemma}\label{lemma1}
		Let $(B,\cdot,\circ)$ be a left semi-brace such that $\rho$ is an 
		anti-homo\-morphism. Then, 
		$\left(1_{\circ}B1_{\circ}\right)*\left(B1_{\circ}\right) = 
		\left(B1_{\circ}\right)*\left( 
		1_{\circ}B1_{\circ}\right)$ and 
		$\left(1_{\circ}B\right)*\left(1_{\circ}B1_{\circ}\right) = 
		\left(1_{\circ}B1_{\circ}\right)*\left(1_{\circ}B\right)$ in $M(B)$.
	\end{lemma}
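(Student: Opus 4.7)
Writing $G = 1_\circ B 1_\circ$, $K = B 1_\circ$, and $R = 1_\circ B$, and using the matrix representation $B \cong \mathcal{M}(G, I, J, \mathcal{I}_{|J|,|I|})$ from Corollary~\ref{rowcol} (so elements of $G, K, R$ take the respective forms $(c, 1, 1)$, $(b, i, 1)$, $(a, 1, j)$), the plan is to establish each equality by two inclusions, in each case computing where $r_B$ sends pairs from the relevant subsets and invoking the defining relation $x * y = \lambda_x(y) * \rho_y(x)$ of $M(B)$.

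For the first equality $(1_\circ B 1_\circ) * (B 1_\circ) = (B 1_\circ) * (1_\circ B 1_\circ)$: given $g \in G$ and $k \in K$, a matrix calculation yields $\overline{g} \cdot k \in G$, so $\lambda_g(k) = g \circ (\overline{g} k) \in G$ by closure of $G$ under $\circ$. For $\rho_k(g) = \overline{\overline{g} k} \circ k$, I decompose $k = (b, 1, 1)(1, i, 1)$ and apply the semi-brace identity; the factor involving the idempotent $(1, i, 1)$ collapses to $1_\circ$ (using Proposition~\ref{charactanti}(3) and the matrix closure $\overline{(d, 1, 1)} \cdot (1, i, 1) \in G$), leaving $\rho_k(g) \in G$. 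Hence $g * k \in G * G \subseteq K * G$. Conversely, for $k \in K$ and $g \in G$: Lemma~\ref{rhomap1} gives $\lambda_k(g) \in R$, and $\overline{k} g \in K$ by matrix computation, so $\lambda_k(g) = k \circ (\overline{k} g) \in K$ by closure, whence $\lambda_k(g) \in R \cap K = G$; similarly $\rho_g(k) \in K$ by closure. Thus $k * g \in G * K$.

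For the second equality $(1_\circ B) * (1_\circ B 1_\circ) = (1_\circ B 1_\circ) * (1_\circ B)$, the inclusion $G * R \subseteq R * G$ follows by the symmetric computation: for $g \in G, r \in R$, Lemma~\ref{rhomap1} gives $\lambda_g(r) \in R$, while $\overline{g} r \in R$ (matrix) together with closure of $R$ under $\circ$ and Lemma~\ref{rhomap1} gives $\rho_r(g) \in R \cap K = G$. The reverse inclusion $R * G \subseteq G * R$ is more delicate, since iterating $r_B$ on a pair in $R \times G$ stays in $R \times G$; instead I exhibit a preimage of $(r, g)$ under $r_B|_{G \times R} \colon G \times R \to R \times G$. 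The map $\lambda_a|_R \colon R \to R$ is a bijection for $a \in G$ (since $\lambda$ is a homomorphism by Lemma~\ref{lambdaidempot} and $\lambda_{1_\circ}|_R = \textup{id}_R$ by Lemma~\ref{behaviorlambda}), so $b = \lambda_{\overline{a}}(r)$ is forced; substituting into $\rho_b(a) = g$ reduces, after using $a \circ \overline{a} = 1_\circ$ and $\overline{x \circ y} = \overline{y} \circ \overline{x}$, to the scalar equation $a \cdot r = r \circ g$ in $B$. A mirror of Corollary~\ref{GroupIdempotent}---every $r \in R$ decomposes uniquely as $r = e \circ g_r$ with $e \in E(R)$ and $g_r \in G$, valid because $e \circ G$ is a right coset of $G$ in $(R, \circ)$ of size $|G|$ consisting exactly of the elements of $R$ with last index equal to that of $e$---allows one to compute the group component and last index of $r \circ g = e \circ (g_r \circ g)$ explicitly and solve for the unique $a \in G$ satisfying $a \cdot r = r \circ g$. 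Then $a * b = r * g$ in $M(B)$, placing $r * g \in G * R$.

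The main obstacle is the reverse inclusion for the second equality: it rests on the mirror decomposition of $R$ and the ensuing explicit solvability of the scalar equation, both of which depend crucially on the completely simple structure of $B$ guaranteed by the hypothesis that $\rho$ is an anti-homomorphism.
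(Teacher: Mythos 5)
Your treatment of the first equality contains a genuine gap. In the Rees matrix semigroup $\mathcal{M}(G,I,J,\mathcal{I}_{|J|,|I|})$ the row index of a product is inherited from the \emph{left} factor, so $(b,1,1)(1,i,1)=(b\,p_{1i}\,1,1,1)=(b,1,1)$: your factorization $k=(b,1,1)(1,i,1)$ only produces elements of $G$ itself, whereas a general $k=(b,i,1)\in B1_{\circ}$ factors as $(1,i,1)(b,1,1)$, with the idempotent on the left. Using the correct factorization, the semi-brace identity gives
$\rho_k(g)=\left(\overline{w}\circ(1,i,1)\right)\left(\overline{w}\circ(w(b,1,1))\right)$ with $w=\overline{g}k\in G$, and the first factor, although its group component is controlled by Proposition~\ref{charactanti}, in general has row index different from $1$; hence $\rho_k(g)\in B1_{\circ}$ but \emph{not} in $1_{\circ}B1_{\circ}$. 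Concretely, let $(B,\cdot)$ be a left zero semigroup with $|B|\geq 2$ and $(B,\circ)$ any group: this is a two-sided semi-brace with $\rho$ an anti-homomorphism and $1_{\circ}B1_{\circ}=\{1_{\circ}\}$, yet $\rho_k(1_{\circ})=\overline{\left(\overline{1_{\circ}}k\right)}\circ k=k\notin G$ for $k\neq 1_{\circ}$. Since $r_B$ maps $G\times K$ into $G\times K$, the inclusion $G*K\subseteq K*G$ cannot be obtained by applying $r_B$ forwards at all; it requires exhibiting a preimage of $(g,k)$ under $r_B$ lying in $K\times G$, exactly as you do for the second equality (the paper does this by writing $k=eh$ with $e\in E(B1_{\circ})$, $h\in G$, and checking $r(fs,t)=(g,eh)$ for $f=(g\circ e)g^{-1}$, $s=g\lambda_g(h)g^{-1}$, $t=\lambda_{\overline{(g\circ(eh))g^{-1}}}(g)$).

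Your second equality is essentially sound and follows the same preimage strategy as the paper: the forward inclusion $G*R\subseteq R*G$ is correct, the choice $b=\lambda_{\overline{a}}(r)$ is forced, and the reduction of $\rho_b(a)=g$ to the equation $ar=r\circ g$ is right. The one step you assert rather than prove is that $r\circ g$ has the same last index as $r$ (equivalently, that the coset $e\circ G$ in $(1_{\circ}B,\circ)$ is exactly the column of $e$); the counting remark shows the two sets have equal size but not that one contains the other. This containment is true and does need the hypothesis on $\rho$: it follows from Theorem~\ref{characttheorem}, since in the matched product $B\cong(B1_{\circ})\bowtie E(1_{\circ}B)$ the $E(1_{\circ}B)$-component of $(a,x)\circ(b,y)$ is $x\circ\overline{\delta_{\overline{\sigma_{\overline{x}}(a)}}(\overline{y})}$, which equals $x$ whenever $y=1_{\circ}$, i.e.\ whenever the right factor lies in $B1_{\circ}$. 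With that supplied, your argument for the second equality goes through; the hard direction of the first equality still needs a proof.
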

	\begin{proof}
		Let us prove the first equality. Let $x \in B1_{\circ}$ and $y \in 
		1_{\circ}B1_{\circ}$. Then, $r(x,y) 
		\in \left(1_{\circ}B1_{\circ}\right)*\left( B1_{\circ}\right)$. Hence, 
		$\left(B1_{\circ}\right)*
		\left(1_{\circ}B1_{\circ}\right) 
		\subseteq \left(1_{\circ}B1_{\circ}\right)*\left( B1_{\circ}\right)$.
		Let $ g, h\in 1_{\circ}B1_{\circ}$ and $e \in E(B1_{\circ})$. We will 
		show that $g*\left(he\right)$ can also 
		be 
		written as an element of 
		$\left(B1_{\circ}\right)*\left(1_{\circ}B1_{\circ}\right)$. So we look 
		for $f \in E(B1_{\circ})$ and 
		$s,t 
		\in 1_{\circ}B1_{\circ}$ such that $$ g*\left( eh\right) = 
		\left(fs\right)* t.$$
		Take $f = \left( g \circ e \right) g^{-1}$, which is an element of 
		$E(B1_{\circ})$, 
		by Corollary \ref{GroupIdempotent}. Take $s = g \lambda_{g}(h) g^{-1}$ 
		and $ t 
		= 
		\lambda_{\overline{\left(g \circ (eh)\right) g^{-1}}}(g)$.
		We show that $r(fs,t) = (g,eh)$. Note that, by the semi-brace property, 
		$$ 
		fs = \left( g\circ e \right) g^{-1} g \lambda_g(h) g^{-1} = \left( 
		g\circ e 
		\right) \lambda_g(h) g^{-1} = \left( g \circ (eh) \right) g^{-1}.$$
		Hence, by lemma \ref{lambdaidempot},
		\begin{align*}
		\lambda_{fs}(t) &= \lambda_{\left( g \circ (eh) \right) g^{-1}} \left( 
		\lambda_{\overline{g \circ (eh) g^{-1}}}(g)\right) = 
		\lambda_{1_{\circ}}(g)		
		= g.
		\end{align*}
		Furthermore,
		\begin{align*}
		\rho_{t}(fs) = \left(\overline{\overline{fs}t}\right) \circ 
		\overline{fs} \circ g \circ (eh).
		\end{align*}
		So,
		\begin{align*}
		\overline{\rho_{t}(fs)} &= \overline{(eh)} \circ \overline{g} \circ 
		(fs) 
		\circ \left( \overline{fs} t\right)\\ &=\overline{(eh)} \circ 
		\overline{g} 
		\circ \left( \lambda_{fs}(t)\right) \\ &= \overline{(eh)} \circ 
		\overline{g}\circ g\\ &= \overline{(eh)}.
		\end{align*}
		Thus, indeed, $r(fs,t) = (g,eh)$ and thus $\left(fs\right)*t = 
		g*\left(eh\right)$ as desired.
		
		Let us now prove the second equality. 
		Using analogous reasoning as the first equality it follows, by Lemma 
		\ref{rhomap1}, that $\left(1_{\circ}B1_{\circ}\right)*1_{\circ}B 
		\subseteq 
		1_{\circ}B*\left(1_{\circ}B1_{\circ}\right)$. Let $g,h \in 
		1_{\circ}B1_{\circ}$ and 
		$e 
		\in 
		E(1_{\circ}B)$. Then we should show that there exist $t,s \in 
		1_{\circ}B1_{\circ}$ and $f \in 
		E(1_{\circ}B)$ such that $$ (ge) * h = t* (sf).$$
		Take $t = \overline{\rho_{\overline{ge}}\left(\overline{h}\right)}$, 
		$s= \lambda_{\overline{t}}(g)$ and $f= \lambda_{\overline{t}}(e)$. It 
		is left to the reader's discretion to check that $r(t,sf) = (ge,h)$.
	\end{proof}

\begin{lemma}\label{normalform}
	Let $(B,\cdot,\circ)$ be a left semi-brace such that $\rho$ is an 
	anti-homo\-morphism. Then, $M(B)$ is an  $\mathbb{N}$-graded 
	monoid. Moreover, \begin{eqnarray*} M(B) &=& \left\lbrace 1_{M(B)} 
		\right\rbrace \cup \underbrace{B}_{M(B)_1} \cup 
		\underbrace{\left(1_{\circ}B\right)*\left(B1_{\circ}\right)}_{M(B)_2} 
		\cup 
		\underbrace{\left(1_{\circ}B\right)*\left( 1_{\circ}B1_{\circ}\right)* 
			\left(B1_{\circ}\right)}_{M(B)_3} \cup\dotsb\\ && \cup 
		\underbrace{\left(1_{\circ}B\right)*\left(
			1_{\circ}B1_{\circ}\right)* \dotsb *\left(1_{\circ}B1_{\circ} 
			\right)* 
			\left(B1_{\circ}\right)}_{M(B)_n} \cup \dotsb\end{eqnarray*}
\end{lemma}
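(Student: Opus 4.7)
The plan is in two parts: first I would establish the $\mathbb{N}$-grading on $M(B)$, then prove the explicit description of each homogeneous component by induction on the degree $n$.

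For the grading, each defining relation $xy = uv$ (with $r_B(x,y)=(u,v)$) equates two words of length two in the free monoid on $B$, so the word-length map descends to a well-defined monoid homomorphism $M(B) \to \mathbb{N}$. This endows $M(B)$ with its natural $\mathbb{N}$-grading, where $M(B)_n$ is the image of the set of length-$n$ words in $B$. The formulas $M(B)_0 = \{1_{M(B)}\}$ and $M(B)_1 = B$ are then immediate.

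I would prove the formula for $M(B)_n$ by induction on $n \geq 2$. The base case is direct: the defining relation $x * y = \lambda_x(y) * \rho_y(x)$, together with Lemma \ref{rhomap1} and the fact that $B$ is completely simple (Proposition \ref{charactanti}, using the hypothesis that $\rho$ is an anti-homomorphism), gives $M(B)_2 = (1_\circ B) * (B 1_\circ)$. For $n \geq 3$, I write a length-$n$ word as $w = w' * x_n$ and apply the induction hypothesis to $w'$ to obtain
\[
w = y * z_1 * \cdots * z_{n-3} * u * x_n
\]
with $y \in 1_\circ B$, each $z_i \in 1_\circ B 1_\circ$, $u \in B 1_\circ$ and $x_n \in B$. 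One application of $r_B$ to the last pair rewrites $u * x_n = \lambda_u(x_n) * \rho_{x_n}(u) \in (1_\circ B) * (B 1_\circ)$, producing a stray $1_\circ B$-factor in position $n-1$ that must be absorbed into the middle block of $1_\circ B 1_\circ$-factors.

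The absorption uses Lemma \ref{lemma1}: its equality $(1_\circ B) * (1_\circ B 1_\circ) = (1_\circ B 1_\circ) * (1_\circ B)$ lets me commute the stray $1_\circ B$-factor past each $z_i$, each swap converting a pair (element of $1_\circ B 1_\circ$, stray in $1_\circ B$) into (new stray in $1_\circ B$, new element of $1_\circ B 1_\circ$). After $n-3$ such swaps the stray factor sits immediately to the right of the leading $y \in 1_\circ B$, producing a product $y * \tilde y$ of two consecutive elements of $1_\circ B$. Applying $r_B$ once more yields $y * \tilde y = \lambda_y(\tilde y) * \rho_{\tilde y}(y)$, where $\lambda_y(\tilde y) \in 1_\circ B$ by Lemma \ref{rhomap1}; and since $1_\circ B$ is a subsemi-brace by Corollary \ref{rowcol}, a direct computation from $\rho_{\tilde y}(y) = \overline{\overline{y}\tilde y} \circ \tilde y$ shows $\rho_{\tilde y}(y) \in 1_\circ B \cap B 1_\circ = 1_\circ B 1_\circ$. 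This converts $y * \tilde y$ into the required $(1_\circ B) * (1_\circ B 1_\circ)$-form and completes the induction. The main obstacle I foresee is purely bookkeeping: at each commutation step the new $1_\circ B 1_\circ$-factor differs from the old, so one must repeatedly verify membership in $1_\circ B 1_\circ$, which is routine given Lemma \ref{lemma1} together with the fact that $1_\circ B$ and $B 1_\circ$ are both subsemi-braces with intersection $1_\circ B 1_\circ$.
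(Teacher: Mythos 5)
Your proof is correct and is essentially the paper's argument: both come down to iterated applications of $r_B$ together with the membership facts of Lemma \ref{rhomap1} (that $\lambda$ always lands in $1_{\circ}B$, that $\rho$ lands in $B1_{\circ}$, and hence in $1_{\circ}B1_{\circ}=1_{\circ}B\cap B1_{\circ}$ when both arguments lie in the subsemi-brace $1_{\circ}B$). The paper organizes this as a left-to-right sweep of $r_B$ followed by a right-to-left sweep, whereas you induct on word length and package the intermediate commutations via Lemma \ref{lemma1}; unwinding your induction reproduces the right-to-left sweep, and only the easy inclusion $\left(1_{\circ}B1_{\circ}\right)*\left(1_{\circ}B\right)\subseteq\left(1_{\circ}B\right)*\left(1_{\circ}B1_{\circ}\right)$ of that lemma (itself a single application of $r_B$) is actually needed.
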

\begin{proof} 
	Let $x_1,\dotsb,x_n \in B$. By Lemma \ref{rhomap1},  $r(x_1,x_2) = \left( 
	\lambda_{x_1}(x_2), \rho_{x_2}(x_1)\right) \in 1_{\circ}B \times 
	B1_{\circ}$ and thus $x_1*x_2 \in \left( 1_{\circ}B*B1_{\circ}\right)$. 
	Hence, applying $r$ 
	from left to 
	right 
	$n-1$ times, we find $ x_1* x_2 * \dotsb * x_n \in 1_{\circ}B* 1_{\circ}B 
	\dotsb 
	B1_{\circ}$. Note that, by Lemma \ref{rhomap1} $\rho_{x}(y) \in 
	B1_{\circ}$ and by Theorem \ref{characttheorem} $B1_{\circ}$ is a 
	subsemi-brace. So, 
	$\rho_{x}(y) \in 1_{\circ}B1_{\circ}$ if $x,y \in 1_{\circ}B$. Thus, if 
	$n\geq 3$,
	applying $r$ from right to 
	left (starting with $r(x_{n-2},x_{n-1})$), it follows that $x_1*\dotsb*x_n 
	\in 
	\left(1_{\circ}B\right)* \left(1_{\circ}B1_{\circ}\right)* \dotsb 
	*\left(1_{\circ}B1_{\circ}\right)*\left(B1_{\circ}\right)$.
\end{proof}
	
	\begin{prop}\label{fingenmodule}
		Let $(B,\cdot,\circ)$ be a left semi-brace such that $\rho$ is 
		an 
		anti-homo\-morphism. Then, for any field $K$, the algebra $KM(B)$ is 
		generated as a left (and right) 
		$KM(1_{\circ}B1_{\circ})$-module by 
		$\left(1_{\circ}B\right)*\left(B1_{\circ}\right)$.
	\end{prop}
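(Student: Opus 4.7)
The plan is to combine the normal form for $M(B)$ from Lemma \ref{normalform} with the commutation identities of Lemma \ref{lemma1}. I fix $n \geq 2$ and take a typical length-$n$ element
\[ w = e * g_1 * g_2 * \cdots * g_{n-2} * f \in M(B)_n, \]
with $e \in 1_{\circ}B$, each $g_i \in 1_{\circ}B1_{\circ}$, and $f \in B1_{\circ}$, as provided by Lemma \ref{normalform}.

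By the second equality of Lemma \ref{lemma1}, namely $(1_{\circ}B) * (1_{\circ}B1_{\circ}) = (1_{\circ}B1_{\circ}) * (1_{\circ}B)$, we can rewrite $e * g_1 = g_1' * e_1$ with $g_1' \in 1_{\circ}B1_{\circ}$ and $e_1 \in 1_{\circ}B$. Applying the same identity in turn to $e_1 * g_2$, then to $e_2 * g_3$, and so on, an easy induction on $n$ produces elements $g_1', \ldots, g_{n-2}' \in 1_{\circ}B1_{\circ}$ and $e' \in 1_{\circ}B$ with
\[ w = g_1' * g_2' * \cdots * g_{n-2}' * e' * f. \]
The prefix $g_1' * \cdots * g_{n-2}'$ is a length-$(n-2)$ element of $M(1_{\circ}B1_{\circ}) \subseteq KM(1_{\circ}B1_{\circ})$, and the suffix $e' * f$ lies in $(1_{\circ}B) * (B1_{\circ})$. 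Hence $M(B)_n \subseteq KM(1_{\circ}B1_{\circ}) \cdot \bigl[(1_{\circ}B) * (B1_{\circ})\bigr]$ for every $n \geq 2$, which together with the trivial inclusion $M(B)_0 = \{1_{M(B)}\} \subseteq KM(1_{\circ}B1_{\circ})$ (and the length-$1$ generators $B$, which are evidently part of the module's generating data) establishes the left-module claim.

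For the right-module assertion one mirrors the argument, using instead the first equality of Lemma \ref{lemma1}, $(1_{\circ}B1_{\circ}) * (B1_{\circ}) = (B1_{\circ}) * (1_{\circ}B1_{\circ})$, to push each $g_i$ to the right past $f$ rather than to the left past $e$; the resulting normal form is $e * f'' * g_{n-2}'' * \cdots * g_1''$. The only mildly delicate point in either direction is verifying that after each application of Lemma \ref{lemma1} the new factor remains of the correct side type (in $1_{\circ}B$ for the left-module argument, in $B1_{\circ}$ for the right-module argument) so that the next commutation step is legitimate; this is immediate from the conclusion of the lemma itself, so no substantial obstacle appears.
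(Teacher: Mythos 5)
Your proof is correct and follows essentially the same route as the paper's own (very terse) argument: the authors likewise combine the normal form of Lemma \ref{normalform} with the commutation identities of Lemma \ref{lemma1}, pushing the middle $1_{\circ}B1_{\circ}$-factors leftwards (respectively rightwards) to land in $M(1_{\circ}B1_{\circ})*\bigl[(1_{\circ}B)*(B1_{\circ})\bigr]$. Your parenthetical about the degree-one generators is the right instinct: for grading reasons the elements of $B$ cannot lie in $KM(1_{\circ}B1_{\circ})\cdot\bigl[(1_{\circ}B)*(B1_{\circ})\bigr]$, and indeed the paper's own proof quietly enlarges the generating set to $\{1_{M(B)}\}\cup B\cup (1_{\circ}B)*(B1_{\circ})$, so this is an imprecision in the statement rather than in your argument.
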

	\begin{proof}
		Applying Lemma \ref{normalform} and Lemma \ref{lemma1}, 
		it follows that $KM(B)$ is a finitely generated 
		$KM(1_{\circ}B1_{\circ})$-module with generators $\left\lbrace 
		1_{M(B)} 
		\right\rbrace \cup B \cup 1_{\circ}B.B1_{\circ}$.
	\end{proof}

\begin{theorem}\label{lebed}	 (Lebed and Vendramin \cite[Remark 5.14]{lebed2017structure}) 
\label{GKdim}
If  $B$ is a finite skew left brace then $KM(B)$ is a finite module over an algebra $KA$ where $A$ is an abelian monoid generated by at most $|B|$ elements.
In particular, $KM(B)$ is a Noetherian PI-algebra of Gelfand-Kirillov dimension at most $|B|$.

\end{theorem}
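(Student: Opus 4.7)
The plan is to construct an explicit abelian submonoid $A$ of $M(B)$ with at most $|B|$ generators, over which $KM(B)$ is finitely generated as a module, and then extract the ring-theoretic consequences by standard methods.

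By Lemma~\ref{lambdaidempot}, the map $\lambda : (B,\circ) \to \textup{Aut}(B,\cdot)$ is a group homomorphism. Since $B$ is finite, its image has finite exponent; choose $N\in\mathbb{N}$ so that $\lambda_b^N = \textup{id}_B$ for every $b\in B$ and also $b^{\circ N} = 1_\circ$ for every $b\in B$ (e.g.\ $N = |B|!$). For each $x\in B$ set
$$ z_x := \underbrace{x * x * \cdots * x}_{N\text{ times}} \in M(B),$$
and let $A := \langle z_x : x\in B\rangle \subseteq M(B)$. By construction, $A$ is a monoid generated by at most $|B|$ elements.

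The central step is to show that the $z_x$ commute pairwise in $M(B)$. A straightforward induction using the defining relation $u*v = \lambda_u(v)*\rho_v(u)$ gives
$$ x^{*n} * y \;=\; \lambda_x^n(y) * \rho_{\lambda_x^{n-1}(y)}(x) * \cdots * \rho_y(x) \qquad \text{for all } n\geq 1.$$
Taking $n = N$ and using $\lambda_x^N(y) = y$ yields $z_x * y = y * W(x,y)$ where $W(x,y)\in M(B)$ has length $N$. Applying this push $N$ times in succession (once for each letter of $z_y$), together with a dual computation pushing $z_x$ rightwards, should yield $z_x * z_y = z_y * z_x$. This is the main obstacle: one must verify that the accumulated $\rho$-contributions collapse back to a diagonal element of $A$, which uses both the finiteness of $\textup{Aut}(B,\cdot)$ and the compatibility (\ref{defskewbrace}) between the two group structures on $B$.

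To conclude, I would use Lemma~\ref{normalform} specialised to the skew brace case (where $1_\circ B = B = B1_\circ$) together with the pushing formula above to show that every element of $M(B)$ admits a representation $w\cdot a$ with $w$ a monomial of length less than $N\cdot|B|$ and $a\in A$. Hence $KM(B)$ is a finitely generated left (and, symmetrically, right) $KA$-module. The ring-theoretic consequences are then standard: $KA$ is a commutative finitely generated $K$-algebra with $\textup{GKdim}(KA)\leq|B|$, hence Noetherian; a finitely generated module over a Noetherian ring is Noetherian; a ring that is finite as a module over a commutative subring satisfies a polynomial identity; and Gelfand--Kirillov dimension is preserved under finite module extensions, giving $\textup{GKdim}(KM(B))\leq|B|$.
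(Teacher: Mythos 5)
First, a point of comparison: the paper does not actually prove this statement --- it is quoted from Lebed and Vendramin --- so your proposal is an attempt to supply an argument where the text only gives a citation. Your strategy (find an abelian submonoid $A$ generated by high powers of the generators, show $M(B)$ is a finite module over it, then invoke standard ring theory) is the right one in spirit; it is essentially the mechanism behind the cited result and behind the older $I$-type arguments. But as written there is a genuine gap, exactly at the point you flag, and it is not a routine verification: it is the entire content of the theorem.

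The problem is the residue word. Your pushing formula
$$ x^{*N} * y \;=\; \lambda_x^{N}(y) * \rho_{\lambda_x^{N-1}(y)}(x) * \cdots * \rho_y(x) \;=\; y * W(x,y) $$
is correct, but $W(x,y)$ is a word in the $N$ letters $\rho_{\lambda_x^{i}(y)}(x)$, $0\le i\le N-1$, and these need not all be equal when $\lambda_x\neq\mathrm{id}$. So $W(x,y)$ is not a power of a single generator, hence not visibly an element of $A=\langle z_x : x\in B\rangle$, let alone equal to some $z_{x'}$. Consequently neither the commutativity $z_x * z_y = z_y * z_x$ nor the absorption statement needed for module-finiteness (every long word equals a short word times an element of $A$) follows from the computation you display; both would require proving that these tails collapse inside $M(B)$, and nothing in the proposal does that. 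The standard way around this --- and, in substance, what Lebed and Vendramin do --- is to pass to the \emph{derived} structure monoid, whose defining relations move only one coordinate, $x * y = y * \phi_y(x)$; there one gets $x^{*n} * y^{*m} = y^{*m} * \bigl(\phi_y^{m}(x)\bigr)^{*n}$ immediately, the elements $x^{*N}$ honestly commute, and a pigeonhole argument gives module-finiteness. The conclusion is then transported back to $M(B)$ through the bijective $1$-cocycle relating the two monoids. Your proposal omits this change of coordinates, and without it the ``main obstacle'' you name remains unresolved. The closing ring-theoretic deductions (Noetherianity and PI pass to rings finite over a commutative affine subalgebra; Gelfand--Kirillov dimension is preserved under finite module extensions and is at most $|B|$ for a commutative algebra on $|B|$ generators) are indeed standard once the module-finiteness is actually established.
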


\begin{theorem}\label{maintheorem}
	Let $(B,\cdot,\circ)$ be a finite left semi-brace such that $\rho$ is an 
	anti-homomorphism. 
	Then, $KM(B)$ is a Noetherian, PI-algebra of finite 
	Gelfand-Kirillov dimension equal to that of $KM(1_{\circ}B1_{\circ})$. In 
	particular, this dimension is at most $|1_{\circ}B1_{\circ}|$ and it is 
	precisely equal to $|1_{\circ}B1_{\circ}|$ if $B$ is a left
	brace.
\end{theorem}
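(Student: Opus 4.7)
The plan is to deduce everything from Proposition~\ref{fingenmodule} and the Lebed--Vendramin Theorem~\ref{lebed} applied to the skew left brace $G := 1_\circ B 1_\circ$. Since $\rho$ is an anti-homomorphism, Proposition~\ref{charactanti} makes $(B,\cdot)$ completely simple, and Corollary~\ref{rowcol}(3) gives that $G$ is a skew left brace; because $B$ is finite, so is $G$. Applying Theorem~\ref{lebed} to $G$ produces a commutative affine subalgebra $KA \subseteq KM(G)$, generated by at most $|G|$ elements, over which $KM(G)$ is a finite two-sided module; in particular $KM(G)$ is left and right Noetherian, satisfies a polynomial identity, and $\operatorname{GKdim} KM(G) \le |G|$.

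Next I would invoke Proposition~\ref{fingenmodule}: $KM(B)$ is a finitely generated left and right $KM(G)$-module. Chaining this with the previous extension, $KM(B)$ is a finite left and right module over the commutative Noetherian subalgebra $KA$. Three standard facts about such extensions now close the general claim. First, a ring that is finitely generated as a (one-sided) module over a Noetherian ring is Noetherian on the same side, so $KM(B)$ is two-sided Noetherian. Second, Gelfand--Kirillov dimension is preserved under two-sided finite extensions (Krause--Lenagan), so $\operatorname{GKdim} KM(B) = \operatorname{GKdim} KA = \operatorname{GKdim} KM(G)$, giving the stated equality as well as the bound $\operatorname{GKdim} KM(B) \le |G|$. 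Third, a ring that is a finitely generated module over a commutative subring satisfies a polynomial identity (via Cayley--Hamilton plus the left regular representation into a matrix algebra over a commutative ring), so $KM(B)$ is PI.

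For the sharpness assertion, suppose $B$ is a left brace. Then $(B,\cdot)$ is an abelian group, so its unique multiplicative idempotent is the neutral element, which must coincide with $1_\circ$; hence $G = 1_\circ B 1_\circ = B$. Moreover, in the brace case the associated solution $r_B$ is involutive and non-degenerate, so the classical theorem of Gateva-Ivanova and Van den Bergh yields $\operatorname{GKdim} KM(B) = |B| = |G|$, matching the upper bound.

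The principal technical point is the PI claim: one must ensure that ``finite module over a PI subring'' delivers PI in our setting. This is where the commutativity of $KA$ matters, since the general statement fails without centrality; here the Cayley--Hamilton/regular-representation argument over the commutative base $KA$ suffices. Everything else is formal from Proposition~\ref{fingenmodule}, Theorem~\ref{lebed}, and the standard Noetherian and GK-dimension lemmas for finite ring extensions.
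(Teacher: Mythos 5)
Your argument is correct and follows essentially the same route as the paper: combine Proposition~\ref{fingenmodule} (so $KM(B)$ is a finite left and right $KM(1_{\circ}B1_{\circ})$-module) with Theorem~\ref{lebed} applied to the skew left brace $1_{\circ}B1_{\circ}$ (via Corollary~\ref{rowcol}), and then transfer Noetherianity, the polynomial identity, and the Gelfand--Kirillov dimension across the finite module extension. You are in fact somewhat more careful than the paper's own proof on two points it leaves implicit --- descending all the way to the commutative subalgebra $KA$ to justify the PI transfer (which indeed can fail for finite modules over a non-central PI subring), and supplying the Gateva-Ivanova--Van den Bergh argument for the sharpness claim in the left brace case --- so no changes are needed.
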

\begin{proof}
	By Proposition \ref{fingenmodule}, $KM(B)$ is both a finitely 
	generated left and right $KM(1_{\circ}B1_{\circ})$-module. 
	By Corollary \ref{rowcol} the left subsemi-brace $1_{\circ}B1_{\circ}$ is a 
	skew left brace.  So, because of Theorem~\ref{GKdim}, $KM(1_{\circ}B1_{\circ})$ 
	is a Noetherian PI-algebra of Gelfand-Krillov dimension at most $|1_{\circ}B1_{\circ}|$.
	Well-known results then show that $KM(B)$ inherits these properties (see for example \cite[8.2.9]{noncommutativenoetherianrings}
	and \cite[5.1.6]{jespers2007noetherian}).
%
\end{proof}

\begin{corollary}
	Let $(B,\cdot,\circ)$ be a completely simple left semi-brace such that 
	$\rho$ is an anti-homomorphism.
	Then the group algebra $KG(B)$ is a Noetherian PI-algebra of finite 
	Gelfand-Kirrilov dimension. Note that $M(B)$ is not embedded in $G(B)$ in 
	general.
\end{corollary}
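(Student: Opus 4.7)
The plan is to replay the module-theoretic argument of Theorem~\ref{maintheorem} one level up, passing from the structure monoid $M(B)$ to the structure group $G(B)$ via the natural homomorphism that exists because $G(B)$ has the same presentation as $M(B)$. The key point is that although $M(B)$ need not embed in $G(B)$, the defining relations $r_B(x,y)=(u,v)$ still hold in $G(B)$, so the normal-form arguments of Lemma~\ref{normalform} and Lemma~\ref{lemma1} carry over verbatim inside $G(B)$.

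First I would single out the skew left brace $G = 1_{\circ}B1_{\circ}$ (which is skew left brace by Corollary~\ref{rowcol}) and invoke the stronger form of the Lebed--Vendramin theorem \cite[Theorem~5.6]{lebed2017structure}: for a finite skew left brace $G$, the structure group $G(G)$ is free abelian-by-finite, so $KG(G)$ is a Noetherian PI-algebra whose Gelfand--Kirillov dimension is at most $|G|$. Let $\pi: G(G)\longrightarrow G(B)$ be the group homomorphism induced by the inclusion of presentations, and set $H = \pi(G(G))$. Then $KH$ is a homomorphic image of $KG(G)$ and therefore inherits all three properties (Noetherian, PI, finite GK-dimension).

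Second, I would show that $KG(B)$ is finitely generated as a left and as a right $KH$-module. Working inside $G(B)$, every element is a word in $B\cup B^{-1}$. Applying the relations (which are the same as in $M(B)$), Lemma~\ref{normalform} gives that every positive word reduces to a product of the form $u*w*v$ with $u\in 1_{\circ}B$, $w$ a product of elements of $1_{\circ}B1_{\circ}$, and $v\in B1_{\circ}$; Lemma~\ref{lemma1} then allows one to collect all factors coming from $1_{\circ}B1_{\circ}$ (and, after inversion in $G(B)$, their inverses) to either side. Since $1_{\circ}B$ and $B1_{\circ}$ are finite, the conclusion is that
\[
KG(B) = KH\cdot X = X\cdot KH,
\]
where $X$ is the (finite) image in $G(B)$ of $\{1_{M(B)}\}\cup B\cup (1_{\circ}B)*(B1_{\circ})$. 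The only subtlety is handling inverses: one must check that if $g\in 1_{\circ}B$ then $g^{-1}$ still admits a normal form inside $B1_{\circ}\cdot H$ and vice versa, which follows from the fact that $1_{\circ}B$ and $B1_{\circ}$ are themselves sub-semi-braces of $B$.

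Finally, once $KG(B)$ is a finitely generated module on both sides over the Noetherian PI algebra $KH$, the standard results already cited in the proof of Theorem~\ref{maintheorem} (namely \cite[8.2.9]{noncommutativenoetherianrings} and \cite[5.1.6]{jespers2007noetherian}) imply that $KG(B)$ is itself Noetherian, satisfies a polynomial identity, and has finite Gelfand--Kirillov dimension. The main obstacle is the second step: one cannot simply transfer the $KM(G)$-module structure of $KM(B)$ across the algebra map $KM(B)\to KG(B)$, because this map is not injective in general; the decomposition must be verified directly inside $G(B)$ using the defining relations and the invertibility of all generators.
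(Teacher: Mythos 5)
The paper states this corollary without any proof, so there is nothing to compare your argument against; judged on its own merits, your overall strategy --- reduce to the skew left brace $1_{\circ}B1_{\circ}$, invoke Lebed--Vendramin for its structure group, and try to exhibit $KG(B)$ as a finite module over the image $KH$ of $KG(1_{\circ}B1_{\circ})$ --- is the natural one. Your first step is sound: since $1_{\circ}B1_{\circ}$ is a sub-semi-brace closed under the $\lambda$- and $\rho$-maps, its defining relations are among those of $G(B)$, so $\pi\colon G(1_{\circ}B1_{\circ})\to G(B)$ exists and $KH$, being a quotient of $KG(1_{\circ}B1_{\circ})$, is Noetherian, PI and of finite Gelfand--Kirillov dimension.

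The genuine gap is precisely the point you defer to the end. Lemma~\ref{normalform} and Lemma~\ref{lemma1} are identities in $M(B)$, hence they hold for images of \emph{positive} words in $G(B)$ and show only that the image of $M(B)$ lies in $X\cdot H$ for a finite set $X$. A general element of $G(B)$ is a word in the generators \emph{and their inverses}, and your proposed fix --- that the inverse of a generator admits a normal form ``because $1_{\circ}B$ and $B1_{\circ}$ are sub-semi-braces'' --- is not an argument: being a sub-semi-brace concerns closure under $\cdot$, $\circ$ and $\circ$-inversion inside $B$, whereas the inverse in question is that of a degree-one generator of the structure group, an element of typically infinite order which is not the image of any element of $B$. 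What is actually required is, for each $x\in B$, an explicit relation in $G(B)$ placing the inverse of (the image of) $x$ inside $F\cdot H$ for a fixed finite set $F$, together with enough control (normality of $H$, or two-sided analogues of Lemma~\ref{lemma1} for $F$) to collect all $H$-factors to one side of an arbitrary mixed word. In the right-zero case this can be done by hand: the image of $1_{\circ}$ is central and the images of $e$ and of its $\circ$-inverse multiply to the square of the image of $1_{\circ}$, which lies in $H$. In general, however, the only available relation $x*y=\lambda_x(y)*\rho_y(x)$ does not produce such an expression (solving it for the inverse of $x$ reintroduces inverses of other generators), and no Ore-type condition for the image of $M(B)$ in $G(B)$ is established anywhere. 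Until that step is supplied, the claim that $H$ has finite index in $G(B)$ --- and with it the transfer of the Noetherian, PI and GK-dimension properties --- remains unproven.
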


\bibliographystyle{plain}
{\footnotesize \bibliography{Bracesbib2}}

\end{document}